\renewcommand*{\backref}[1]{}
\renewcommand*{\backrefalt}[4]{({\tiny%
   \ifcase #1 Not cited.%
         \or Cited on page~#2.%
         \else Cited on pages #2.%
   \fi%
   })}
\numberwithin{equation}{section}
\newcommand\mtop{1in}
\newcommand\mbottom{1in}
\newcommand\mleft{1in}
\newcommand\mright{1in}
\newtheorem{thm}{Theorem}[section]
\newtheorem{prop}[thm]{Proposition}
\newtheorem{lemma}[thm]{Lemma}
\newtheorem{cor}[thm]{Corollary}
\theoremstyle{definition}
\newtheorem{defi}[thm]{Definition}
\newtheorem{rmk}[thm]{Remark}
\newcommand\reallywidehat[1]{%
\savestack{\tmpbox}{\stretchto{%
  \scaleto{%
    \scalerel*[\widthof{\ensuremath{#1}}]{\kern-.6pt\bigwedge\kern-.6pt}%
    {\rule[-\textheight/2]{1ex}{\textheight}}
  }{\textheight}%
}{0.5ex}}%
\stackon[1pt]{#1}{\tmpbox}%
}
\DeclareSymbolFont{bbold}{U}{bbold}{m}{n}
\DeclareSymbolFontAlphabet{\mathbbold}{bbold}
\def\@tocline#1#2#3#4#5#6#7{\relax
  \ifnum #1>\c@tocdepth 
  \else
    \par \addpenalty\@secpenalty\addvspace{#2}%
    \begingroup \hyphenpenalty\@M
    \@ifempty{#4}{%
      \@tempdima\csname r@tocindent\number#1\endcsname\relax
    }{%
      \@tempdima#4\relax
    }%
    \parindent\z@ \leftskip#3\relax \advance\leftskip\@tempdima\relax
    \rightskip\@pnumwidth plus4em \parfillskip-\@pnumwidth
    #5\leavevmode\hskip-\@tempdima
      \ifcase #1
       \or\or \hskip 1em \or \hskip 2em \else \hskip 3em \fi%
      #6\nobreak\relax
    \hfill\hbox to\@pnumwidth{\@tocpagenum{#7}}\par
    \nobreak
    \endgroup
  \fi}
\newcommand{\subalign}[1]{%
  \vcenter{%
    \Let@ \restore@math@cr \default@tag
    \baselineskip\fontdimen10 \scriptfont\tw@
    \advance\baselineskip\fontdimen12 \scriptfont\tw@
    \lineskip\thr@@\fontdimen8 \scriptfont\thr@@
    \lineskiplimit\lineskip
    \ialign{\hfil$\m@th\scriptstyle##$&$\m@th\scriptstyle{}##$\hfil\crcr
      #1\crcr
    }%
  }%
}
\DeclarePairedDelimiter{\abs}{\lvert}{\rvert} 
\DeclarePairedDelimiter{\floor}{\lfloor}{\rfloor}
\newcommand{\R}{\mathbb{R}}
\newcommand{\Z}{\mathbb{Z}}
\newcommand{\N}{\mathbb{N}}
\newcommand{\C}{\mathbb{C}}
\newcommand{\F}{\mathbb{F}}
\newcommand{\D}{\mathbb{D}}
\newcommand{\T}{\mathbb{T}}
\newcommand{\mf}{\mathfrak}
\newcommand{\bbone}{\mathbbold{1}}
\newcommand{\la}{\lambda}
\newcommand{\La}{\Lambda}
\newcommand{\eps}{\epsilon}
\renewcommand{\Re}[1]{\text{Re}(#1)}
\renewcommand{\Re}{\operatorname{Re}}
\renewcommand{\Im}{\text{Im}}
\newcommand{\pfrac}[2]{\left(\frac{#1}{#2}\right)}
\newcommand{\dde}[3]{\left. \frac{d #1}{d #2} \right|_{#3}} 
\newcommand{\ot}{\otimes}
\newcommand{\tmu}{\tilde{\mu}}
\newcommand{\lan}{\left\langle}
\newcommand{\ran}{\right\rangle}
\newcommand{\tth}{^{th}}
\newcommand{\tN}{\tilde{n}}
\newcommand{\tLL}{\tilde{L}}
\newcommand{\Y}{\mathbb{Y}}
\newcommand{\tf}{\tilde{f}}
\newcommand{\bx}{\mathbf{x}}
\newcommand{\by}{\mathbf{y}}
\newcommand{\bi}{\mathbf{i}}
\newcommand{\cP}{\mathbb{Y}}
\renewcommand{\vec}[1]{\boldsymbol{#1}}
\newcommand{\respow}{v}
\DeclareMathOperator{\len}{len}
\DeclareMathOperator{\Res}{Res}
\DeclareMathOperator{\Sig}{Sig}
\DeclareMathOperator{\tG}{\tilde{\Gamma}}
\DeclareMathOperator{\Mat}{Mat}
\DeclareMathOperator{\rank}{rank}
\newcommand{\sqbinom}[2]{\begin{bmatrix}#1\\ #2\end{bmatrix}}
\newcommand{\mbz}{\mathbf{z}}
\newcommand{\GL}{\mathrm{GL}}
\newcommand{\cL}{\mathcal{L}}
\title{The rank of a random triangular matrix over $\F_q$}
\author{Roger Van Peski}
\email{\textcolor{blue}{\href{mailto:rv2549@columbia.edu}{rv2549@columbia.edu}}}
\date{\today}
\begin{document}

\begin{abstract}
We consider uniformly random strictly upper-triangular matrices in $\operatorname{Mat}_n(\mathbb{F}_q)$. For such a matrix $A_n$, we show that $n-\operatorname{rank}(A_n) \approx \log_q n$ as $n \to \infty$, and find that the fluctuations around this limit are finite-order and given by explicit $\mathbb{Z}$-valued random variables. More generally, we consider the random partition whose parts are the sizes of the nilpotent Jordan blocks of $A_n$: its $k$ largest parts (rows) were previously shown by Borodin \cite{borodin1995limit,borodin1999lln} to have jointly Gaussian fluctuations as $n \to \infty$, and its columns correspond to differences $\operatorname{rank}(A_n^{i-1}) - \operatorname{rank}(A_n^i)$. We show the fluctuations of the columns converge jointly to a discrete random point configuration $\mathcal{L}_{t,\chi}$ introduced in \cite{van2023local}. The proofs use an explicit integral formula for the probabilities at finite $n$, obtained by de-Poissonizing a corresponding one in \cite{van2023local}, which is amenable to asymptotic analysis.
\end{abstract}

\maketitle

\tableofcontents

\section{Introduction}

The family of groups $G(n,q)$ of $n \times n$ upper-triangular matrices over $\F_q$ with diagonal entries $1$, is a basic yet rich object. Our goal here is to show that elementary asymptotic questions regarding such matrices lead naturally to a probability distribution defined recently in \cite{van2023local}, which was found there as a limit law for $p$-adic random matrix products, but remains itself somewhat mysterious. 

By now many works in probability have studied Markov chains on $G(n,q)$ and their convergence to the stationary distribution, see for instance Stong \cite{stong1995random}, Coppersmith-Pak \cite{coppersmith2000random}, Arias-Castro-Diaconis-Stanley \cite{arias2004super}, Peres-Sly \cite{peres2013mixing}, Ganguly-Martinelli \cite{ganguly2019upper}, Nestoridi \cite{nestoridi2019super}. From the algebraic side, the character theory and conjugacy classes of these groups are `wild type' and intractable, though various results exist; a nice exposition of these issues and of $G(n,q)$ in general is given in Diaconis-Malliaris \cite{diaconis2021complexity}.




However, the Jordan forms are much simpler than conjugacy classes: all Jordan blocks are unipotent, and hence the Jordan form is parametrized by the multiset of their sizes, which is an integer partition of $n$. Similarly, elements of the corresponding Lie algebra $\mf{g}(n,q)$ of $n \times n$ strictly upper-triangular matrices have only nilpotent Jordan blocks, and the same partition parametrization holds; let us write $J(A)$ for the partition of Jordan block sizes, arranged in decreasing order, as in \Cref{fig:large_jordan}. Concretely, the length of the first row of the partition (i.e. largest Jordan block's size) is the lowest $r$ such that $A^r=0$, while first column, the number of Jordan blocks, is simply the corank of the matrix. More generally, the length of the $i\tth$ column is $\dim \ker(A^i)/\ker(A^{i-1}) = \rank(A^{i-1}) - \rank(A^{i})$, so the lengths of the first $k$ columns are equivalent information to the coranks of the first $k$ powers of the matrix. For $G(n,q)$ the same interpretation of the Jordan block sizes holds after replacing $A$ by $A-I$, so in particular the length of the first column is the dimension of the fixed space of $A$.

Kirillov \cite{kirillov1995variations} asked how the Jordan type of a uniformly random element of $\mf{g}(n,q)$ (equivalently, of $G(n,q)$) is distributed, and how this Jordan type partition `looks' as $n \to \infty$. There are several ways to interpret this question, and Borodin \cite{borodin1995limit,borodin1999lln} answered one by finding the limits of the Jordan block sizes, the rows in \Cref{fig:large_jordan}. Specifically, \cite{borodin1995limit} showed that as $n \to \infty$, the size of the $i\tth$ largest Jordan block grows linearly as $(1-q^{-1})q^{1-i}n$, and the fluctuations of the $k$ largest block sizes are jointly Gaussian. One can already see in \Cref{fig:large_jordan} that these linear growth rates appear to be different, and indeed this is the case.

\begin{figure}
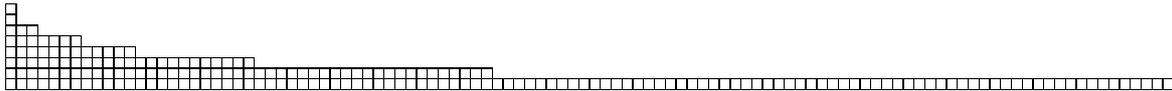


\[
\scalebox{0.24}{
\ydiagram{1,1,3,7,12,23,45,108} 
}
\]

\caption{The Jordan block sizes $J(A) = (108, 45, 23, 12, 7, 3, 1, 1)$ of a uniformly random $A \in \mf{g}(200,2)$, generated on Sage.}\label{fig:large_jordan}

\end{figure}

The present paper answers the dual version of Kirillov's question, determining the asymptotics of the first $k$ columns of the partition in \Cref{fig:large_jordan}, or equivalently the rank of a large triangular matrix and its first $k$ powers. If one samples such a matrix, the Jordan type will look something like \Cref{fig:large_jordan}, with some long rows and much shorter columns, and indeed the asymptotics of the columns differ dramatically from those of the rows. Rather than growing linearly in $n$, they grow as $\log_q n$, and furthermore the order of the fluctuations remains bounded as $n \to \infty$. Hence the limit law of the fluctuations lives not on $\R^k$, but rather on the discrete  set of integer signatures
\begin{equation}
\Sig_k := \{(L_1,\ldots,L_k) \in \Z^k: L_1 \geq \ldots \geq L_k\}.
\end{equation}
In fact, there is not one unique limit law, but rather a family of them, and so we need the following notation.

\begin{defi}\label{def:metric}
For two Borel measures $M_1,M_2$ on a countable set $S$ with discrete $\sigma$-algebra, we denote by 
\begin{equation}
D_\infty(M_1,M_2) := \sup_{x \in S} |M_1(\{x\}) - M_2(\{x\})|
\end{equation}
the $\ell_\infty$ distance between them. When $X_1,X_2$ are two $S$-valued random variables with laws $M_1,M_2$, we abuse notation and write $D_\infty(X_1,X_2) := D_\infty(M_1,M_2)$. Note that the set $S$ is implicit in the notation, and we will often use the same notation $D_\infty$ for different sets $S$.
\end{defi}

The limit object in our main result \Cref{thm:jordan_limit_intro} below is a certain family of distributions $\cL_{q^{-1},\chi} = (\cL^{(i)}_{q^{-1},\chi})_{i \geq 1}$ on infinite integer tuples, which depend on a positive real parameter $\chi$; our reason for delaying its definition is that it requires the notation of Hall-Littlewood symmetric functions. Once this is set up, \Cref{def:cL_series} gives explicit (though quite complicated) formulas for the finite-dimensional distributions of $\cL_{k,q^{-1},\chi} = (\cL^{(i)}_{q^{-1},\chi})_{1 \leq i \leq k}$. 

Though we referred to this family of distributions as the limit object, instead of convergence to a specific member of the family, we have approximation by the appropriate $n$-dependent member of this family with error going to $0$ as $n \to \infty$. Below we use notation $\{x\} := x-\floor{x}$ for the fractional part of a real number $x$. 

\begin{thm}\label{thm:jordan_limit_intro}
Fix a prime power $q$ and for each $n \geq 1$, let $A_n$ be a uniformly random element of the finite set
\begin{equation}
\mf{g}(n,q) := \{A = (a_{i,j})_{1 \leq i,j \leq n} \in \Mat_n(\F_q): a_{i,j} = 0 \text{ for all } i \geq j \}.
\end{equation}
Then for every $k \in \Z_{\geq 1}$,
\begin{equation}\label{eq:haar_metric_cvg}
\lim_{n \to \infty} D_\infty\left((\rank(A_n^{i-1}) - \rank(A_n^i) - \floor{\log_q n})_{1 \leq i \leq k}, (\cL^{(i)}_{q^{-1},q^{\{\log_q n\}}})_{1 \leq i \leq k}\right) = 0,
\end{equation}
where $\cL^{(i)}_{q^{-1},\chi}$ is as defined in \Cref{def:cL_series}, and $D_\infty$ is the metric of \Cref{def:metric} on the set $\Sig_k$.
\end{thm}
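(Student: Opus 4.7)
My plan is to compare two explicit formulas: one for the finite-$n$ joint distribution of the column-length vector
\[
\vec{C}_n := (\rank(A_n^{i-1}) - \rank(A_n^i))_{1 \le i \le k}
\]
and one for the candidate limit law $(\mathcal{L}^{(i)}_{q^{-1},\chi})_{1 \le i \le k}$ given by \Cref{def:cL_series}. The first step is to derive the finite-$n$ formula itself. I would start from the integral/series formula of \cite{van2023local} for the analogous joint distribution of the first $k$ column lengths of the Jordan-type partition arising from the $p$-adic matrix-product model, which is given there in a Poissonized form with the matrix-size parameter playing the role of the Poisson variable. De-Poissonization via coefficient extraction in the appropriate generating variable then yields
\[
p_n(\mu_1,\ldots,\mu_k) := \Pr\bigl[\vec{C}_n = (\mu_1,\ldots,\mu_k)\bigr]
\]
as a closed expression whose $n$-dependence enters through explicit $q$-Pochhammer-type products and polynomial factors in $q^{-n}$.

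With this formula in hand, I introduce the natural scaling $c := \floor{\log_q n}$ and $\chi := q^{\{\log_q n\}} \in [1,q)$, so that $n = \chi q^c$, and analyze the shifted probabilities $p_n(L_1+c,\ldots,L_k+c)$ as $n \to \infty$ with $k$ and $(L_1,\ldots,L_k) \in \Sig_k$ fixed. The signature shift by $c$ is precisely what is needed to re-index the formula so that its $n$-dependence is properly absorbed: factors of $q^{-n+\mu_i}$ become expressions in $L_i, \chi,$ and $q^{\chi q^c - c}$ whose limits (as $c \to \infty$ with $\chi$ bounded) are explicit functions of $L_i$ and $\chi$, while $q$-Pochhammer factors involving $q^{-n}$ collapse to explicit boundary limits. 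Carrying out this asymptotic analysis termwise should recover exactly the Hall-Littlewood series of \Cref{def:cL_series} at parameter $q^{-1}$ and the signature $(L_1,\ldots,L_k)$, yielding pointwise convergence on $\Sig_k$.

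The final step is to upgrade this pointwise convergence to $D_\infty$-convergence, i.e.\ to uniform convergence over $\Sig_k$. For this I need uniform tail bounds showing that both $p_n(L_1+c,\ldots,L_k+c)$ and $\Pr[(\mathcal{L}^{(i)}_{q^{-1},\chi})_i = (L_1,\ldots,L_k)]$ decay rapidly in $\max_i |L_i|$ with constants independent of $n$ and of $\chi \in [1,q)$. On the Hall-Littlewood side, the explicit $q$-Pochhammer prefactors in \Cref{def:cL_series} should supply super-geometric decay in $L_1$ and $-L_k$; on the finite-$n$ side, one can either read off analogous decay directly from the de-Poissonized formula or deduce it from moment bounds on $\rank(A_n^i)$ (complementing Borodin's row estimates). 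The main obstacle I expect is precisely this uniform control: since the target distribution $\mathcal{L}_{q^{-1},\chi}$ itself varies with $n$ through $\chi$, the tail bounds cannot be inherited from a single limiting measure but must be established uniformly over $\chi \in [1,q)$, so that the termwise asymptotic comparison can be combined with a dominated-convergence argument on $\Sig_k$ with counting measure.
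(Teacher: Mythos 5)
Your architecture is right and matches the paper's: de-Poissonize the formula from \cite{van2023local}, shift by $\floor{\log_q n}$, identify $\chi=q^{\{\log_q n\}}$, prove pointwise convergence of the probability mass functions, and then upgrade to $D_\infty$-convergence via some uniformity argument. The paper does the last step slightly differently than you propose---by first proving subsequential weak convergence (its Theorem \ref{thm:jordan_limit_nometric}) and then running a compactness-plus-contradiction argument using continuity of $\Pr(\cL_{k,t,\chi}=\vec L)$ in $\chi$---but your ``uniform tail bounds plus dominated convergence over $\Sig_k$'' route is morally equivalent, and both would work.

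The genuine gap is in the middle step. De-Poissonization does \emph{not} produce a closed series ``whose $n$-dependence enters through explicit $q$-Pochhammer-type products and polynomial factors in $q^{-n}$.'' What it produces (Lemma \ref{thm:depoissonize_int_formula}) is a $k$-fold contour integral, identical to the Poissonized formula except that the factor $e^{\frac{\tau}{1-t}(z_1+\cdots+z_k)}$ is replaced by $(1+z_1+\cdots+z_k)^n$, integrated over $(c\T)^k$ with $c>1$. The $n$-dependence lives inside a power inside the integral, and ``termwise'' asymptotics are not available; one must do saddle-type/contour analysis. Crucially, $(1+z_1+\cdots+z_k)^n$ blows up for $\Re(z_i)$ positive and large, which forces the contour to be \emph{shrunk}---through the poles of $\prod_i (-z_i^{-1};t)_\infty^{-1}$ at $z_i=-t^v$, $v\geq 0$---before the change of variables $w_i=t^{-\eta_k(n)}z_i$ and the comparison with the $\tG$-contour formula in Proposition \ref{thm:series_to_contour_cL} can be carried out. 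This forces one to compute the residues explicitly (Lemma \ref{thm:compute_a_residue}) and show they vanish as $n\to\infty$ (Lemma \ref{thm:res_to_0}); these steps are the entire novel technical content of Section \ref{sec:asymptotics}, precisely because the de-Poissonized integrand grows differently from the Poissonized one in \cite{van2023local}, where no residue expansion was needed. Your proposal does not anticipate this, and without it the asymptotic comparison you describe does not go through.
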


In the simplest case $k=1$, which describes corank fluctuations of a large triangular matrix by \Cref{thm:jordan_limit_intro}, the distribution of $\cL^{(1)}_{q^{-1},\chi}$ is fairly explicit:

\begin{equation}\label{eq:cL_1_explicit}
\Pr(\cL^{(1)}_{q^{-1},\chi} = x) = \frac{1}{\prod_{i \geq 1} (1-q^{-i})} \sum_{m \geq 0} e^{-\chi q^{m-x}} \frac{(-1)^m q^{-\binom{m}{2}}}{\prod_{j=1}^m (1-q^{-j})} \quad \quad \quad \quad \text{ for any $x \in \Z$.}
\end{equation}

Further properties of this random variable and its $k > 1$ generalizations are discussed in \cite[Section 1.2]{van2023local}, and simple exact formulas for its exponential moments are given in \cite[Section 7]{van2023local}. One source of our interest in them comes from their appearance in limits of products of random $p$-adic matrices as both the matrix size and number of products are sent to $\infty$ simultaneously, proven in \cite{van2023local}. Though this is not entirely visible from our proofs, in our view the appearance of the same distribution for triangular matrices over a finite field and matrix products over $\Z_p$ takes its origin in two facts. The first is that the Jordan form of $A_n$ parametrizes the $\F_q[T]$-module structure on $\F_q^n$ where $T$ acts by $A_n$, and modules over the ring of integers of a non-archimedean local field have the same structure for such rings as they do for $\Z_p$ and finite extensions thereof \cite[Chapter II]{mac}. The second is that $\cL_{q^{-1},\chi}$ appears---still conjecturally---to be a universal object in this setting, so even though the prelimit probability measures on modules are different in the two settings, this difference between them is irrelevant in the limit. 

\begin{rmk}
It is also natural to study Jordan forms of uniformly random matrices from larger sets such as $\Mat_n(\F_q)$ or $\GL_n(\F_q)$, and many works do so, see Fulman \cite{fulman_main} for a nice survey. In this case, the Jordan block partition associated to a given irreducible polynomial (e.g. the partition of nilpotent Jordan block sizes) does not grow in size as $n \to \infty$, so the asymptotics are quite different from the ones in the present paper or in \cite{borodin1995limit}. 
\end{rmk}

The differences between growth of columns in \Cref{thm:jordan_limit_nometric} and the growth of rows in \cite{borodin1995limit} mean that entirely different methods are needed for the proof. The approach of \cite{borodin1995limit} is to define a growth process on partitions by successively adding independent random columns (and zero rows, to stay square) to the right and bottom of an $n \times n$ upper-triangular matrix to create an $(n+1) \times (n+1)$ matrix, as is done in \cite{borodin1995limit}. For such a sequence of matrices $A_n,n=0,1,2,\ldots$ defined in this way, $J(A_n)$ is a stochastic process on partitions in discrete `time' $n$. One can then find explicit transition probabilities for the steps of this growth process (this was already done in \cite{kirillov1995variations}), show that the rows in \Cref{fig:large_jordan} are not affected by one anothers' positions in the limit because the linear growth rates are different, and then apply standard results on random walks. This approach was used for the rows of similar measures on partitions by Bufetov-Petrov \cite{bufetov2015lln}, F{\'e}ray-M{\'e}liot \cite{feray2012asymptotics}, and the author \cite{van2020limits}. However, as \Cref{thm:jordan_limit_nometric} shows, the $\log_q n$ growth rates of all columns are the same, and indeed the explicit sampling algorithm given in \cite[Theorem 2.3]{borodin1999lln} (stated later as \Cref{thm:borodin_division}) shows that the positions of columns do continue to affect one another as $n \to \infty$. This makes such a hands-on approach untenable, at least as far as we can see.

Instead, we use the fact that this process $J(A_n), n =0,1,2,\ldots$ is a time-discretization of a so-called Hall-Littlewood process in continuous time, see \Cref{sec:macdonald_background} for definitions. This allows one to derive explicit contour integral formulas for its finite-$n$ distributions by `de-Poissonizing' corresponding ones for this Hall-Littlewood process, which we do in \Cref{sec:depoissonize}. We note that for random partitions distributed by the Plancherel measure of the symmetric group, which has similar algebraic structure to our measure but quite different asymptotics, this trick of passing between Poissonized and de-Poissonized ensembles was used in the celebrated works of Baik-Deift-Johansson \cite{baik1999distribution,baik2000distribution} and Borodin-Okounkov-Olshanski \cite{borodin2000asymptotics}.

The prelimit formulas we obtain by de-Poissonization are still quite nontrivial to analyze. Luckily, a similar analysis was recently carried out for integral formulas corresponding to the continuous-time Poissonized version in \cite{van2023local}. In \Cref{sec:asymptotics} we carry out the analogous limit in our setting: many parts of the computation can be (and were) copied from \cite{van2023local}, but our integrand has extra poles which create technical complications. These issues, and the extra steps needed to treat them, are detailed at the beginning of \Cref{sec:asymptotics}.


\textbf{Acknowledgments.} I thank Alexei Borodin for useful discussions and comments on the exposition, and the anonymous referee for helpful comments on the text. This work was supported by the European Research Council (ERC), Grant Agreement No. 101002013.

\section{Symmetric function background}\label{sec:macdonald_background}

This section is mostly a pared-down version of \cite[Section 3]{van2023local}. We denote by $\cP$ the set of all integer partitions $(\la_1,\la_2,\ldots)$, i.e. sequences of nonnegative integers $\la_1 \geq \la_2 \geq \cdots$ which are eventually $0$. We often drop trailing zeroes and write such partitions as $n$-tuples where $n$ is large enough. They may also be represented by Ferrers diagrams as in \Cref{fig:ferrers}.

\begin{figure}[H]
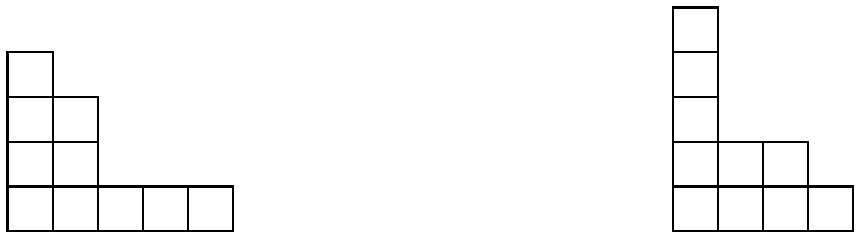

\begin{equation*}
    \raisebox{-.5\height}{\begin{ytableau}
        *(white) \\ 
        *(white) & *(white) \\ 
        *(white) & *(white) \\  
        *(white) & *(white) & *(white) & *(white) & *(white)
    \end{ytableau}}
    \quad \quad \quad \quad \quad \quad \quad \quad \quad \quad \quad \quad \quad \quad \quad 
    \raisebox{.5\height}{\begin{ytableau}
        *(white) \\ 
        *(white) \\  
        *(white) \\ 
        *(white) & *(white) & *(white) \\ 
        *(white) & *(white) & *(white) & *(white)
    \end{ytableau}}
\end{equation*}
\caption{The Young diagram of $\la = (5,2,2,1) $ (left), and that of its conjugate partition $\la' = (4,3,1,1,1)$ obtained by flipping the diagram across the diagonal.} \label{fig:ferrers}
\end{figure}

We call the integers $\la_i$ the \emph{parts} of $\la$, set $\la_i' = \#\{j: \la_j \geq i\}$, and write $m_i(\la) = \#\{j: \la_j = i\} = \la_i'-\la_{i+1}'$. We write $\len(\la)$ for the number of nonzero parts, and denote the set of partitions of length $\leq n$ by $\cP_n$. We write $\mu \prec \la$ or $\la \succ \mu$ if $\la_1 \geq \mu_1 \geq \la_2 \geq \mu_2 \geq \cdots$, and refer to this condition as \emph{interlacing}. Another partial order is defined by containment of Ferrers diagrams, which we write as $\mu \subset \la$, meaning $\mu_i \leq \la_i$ for all $i$. Finally, we denote the partition with all parts equal to zero by $\emptyset$. 

We denote by $\La_n$ the ring $\C[x_1,\ldots,x_n]^{S_n}$ of symmetric polynomials in $n$ variables $x_1,\ldots,x_n$. For a symmetric polynomial $f$, we will often write $f(\bx)$ for $f(x_1,\ldots,x_n)$ when the number of variables is clear from context. We will also use the shorthand $\bx^\la:= x_1^{\la_1} x_2^{\la_2} \cdots x_n^{\la_n}$ for $\la \in \cP_n$. A simple $\C$-basis for $\La_n$ is given by the \emph{monomial symmetric polynomials} $\{m_\la(\bx): \la \in \Y_n\}$ defined by 
\[
m_\la(\bx) = \sum_{\sigma \in S_n/\text{Stab}(\la)} \sigma(\bx^\la)
\]
where $\sigma$ acts by permuting the variables, and $\text{Stab}(\la)$ is the subgroup of permutations such that $\sigma(\bx^\la) = \bx^\la$. It is also a very classical fact that the power sum symmetric polynomials 
\[p_k(\bx) = \sum_{i=1}^n x_i^k, k =1,\ldots,n\]
are algebraically independent and algebraically generate $\La_n$, and so by defining 
\begin{equation*}
    p_\la(\bx) := \prod_{i \geq 1} p_{\la_i}(\bx)
\end{equation*}
for $\la \in \Y$ with $\la_1 \leq n$, we have that $\{p_\la(\bx): \la_1 \leq n\}$ forms another basis for $\La_n$. 

Another special basis for $\La_n$ is given by the \emph{Macdonald polynomials} $P_\la(\bx;q,t)$, which depend on two additional parameters $q$ and $t$ which may in general be complex numbers, though in probabilistic contexts we take $q,t \in (-1,1)$. Our first definition of them requires a certain scalar product on $\La_n$.

\begin{defi}[{\cite[Chapter VI, (9.10)]{mac}}]\label{def:torus_product}
For polynomials $f,g \in \La_n$, define
\begin{equation}
\label{eq:torus_product}
\lan f, g \ran_{q,t;n}' := \frac{1}{n! (2 \pi \bi)^n} \int_{\T^n} f(z_1,\ldots,z_n) \overline{g(z_1,\ldots,z_n)} \prod_{1 \leq i \neq j \leq n} \frac{(z_iz_j^{-1};q)_\infty}{(tz_iz_j^{-1};q)_\infty} \prod_{i=1}^n \frac{dz_i}{z_i},
\end{equation}
where $\T$ denotes the unit circle with usual counterclockwise orientation, and to avoid confusion we clarify that the product is over $\{(i,j) \in \Z: 1 \leq i,j \leq n, i \neq j\}$.
\end{defi}

\begin{defi}\label{def:mac_poly_torus}
The \emph{Macdonald symmetric polynomials} $P_\la(x_1,\ldots,x_n;q,t), \la \in \Y_n$ are defined by the following two properties:
\begin{enumerate}
\item They are `monic' and upper-triangular with respect to the $m_\la(\bx)$ basis, in the sense that they expand as 
\begin{equation}
P_\la(\bx;q,t) = m_\la(\bx) + \sum_{\mu \subsetneq \la} R_{\la \mu}(q,t) m_\mu(\bx).
\end{equation}
\item They are orthogonal with respect to $\lan \cdot, \cdot \ran_{q,t;n}'$. 
\end{enumerate}
\end{defi}

These conditions \emph{a priori} overdetermine the set $\{P_\la(\bx;q,t): \la \in \Y_n\}$, and it is a theorem which follows from \cite[VI (4.7)]{mac} that the Macdonald symmetric polynomials do indeed exist. It is then also clear that they form a basis for $\Y_n$. We note that this paper will only ever consider the two special cases of Macdonald polynomials when either $q$ or $t$ is set to $0$, which are called \emph{Hall-Littlewood polynomials} and \emph{$q$-Whittaker polynomials} respectively.

\begin{defi}\label{def:Q}
For $\la \in \Y_n$, the \emph{dual Macdonald polynomial} $Q_\la$ is given by 
\begin{equation}
Q_\la(x_1,\ldots,x_n;q,t) := b_\la(q,t) P_\la(x_1,\ldots,x_n;q,t)
\end{equation}
where $b_\la$ is an explicit constant given in \cite[p339, (6.19)]{mac}
\end{defi}

The constant multiples $b_\la(q,t)$ are chosen so that the $Q_\la(\bx;q,t)$ form a dual basis to the $P_\la(\bx;q,t)$ with respect to a different scalar product which is related to (a renormalized version of) $\lan \cdot, \cdot \ran_{q,t;n}'$ in the $n \to \infty$ limit, see \cite[Chapter VI, (9.9)]{mac}. Because the $P_\la$ form a basis for the vector space of symmetric polynomials in $n$ variables, there exist symmetric polynomials $P_{\la/\mu}(x_1,\ldots,x_{n-k};q,t) \in \La_{n-k}$ indexed by $\la \in \Y_{n+k}, \mu \in \Y_n$ which are defined by
\begin{equation}\label{eq:def_skewP}
    P_\la(x_1,\ldots,x_{n+k};q,t) = \sum_{\mu \in \Y_n} P_{\la/\mu}(x_{n+1},\ldots,x_{n+k};q,t) P_\mu(x_1,\ldots,x_n;q,t).
\end{equation}
It follows easily from \eqref{eq:def_skewP} that for any $1 \leq d \leq k-1$,
\begin{equation}\label{eq:gen_branch}
    P_{\la/\mu}(x_1,\ldots,x_k;q,t) = \sum_{\nu \in \Y_d} P_{\la/\nu}(x_{d+1},\ldots,x_k;q,t) P_{\nu/\mu}(x_1,\ldots,x_d;q,t).
\end{equation}
We define $Q_{\la/\mu}$ by \eqref{eq:def_skewP} with $Q$ in place of $P$, and it is similarly clear that \eqref{eq:gen_branch} holds for $Q$. An important property of (skew) Macdonald polynomials for probabilistic purposes is the \emph{Cauchy identity} below.

\begin{prop}\label{thm:finite_cauchy}
Let $\nu, \mu \in \Y$. Then
\begin{multline}\label{eq:finite_cauchy}
    \sum_{\kappa \in \Y} P_{\kappa/\nu}(x_1,\ldots,x_n;q,t)Q_{\kappa/\mu}(y_1,\ldots,y_m;q,t) \\
    = \prod_{\substack{1 \leq i \leq n \\ 1 \leq j \leq m}} \frac{(tx_iy_j;q)_\infty }{(x_iy_j;q)_\infty} \sum_{\la \in \Y} Q_{\nu/\la}(y_1,\ldots,y_m;q,t) P_{\mu/\la}(x_1,\ldots,x_n;q,t).
\end{multline}
\end{prop}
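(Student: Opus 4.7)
My plan is to bootstrap the skew identity from the non-skew Cauchy identity
\begin{equation*}
\sum_{\kappa \in \Y} P_\kappa(x;q,t)\,Q_\kappa(y;q,t) = \Pi(x;y), \qquad \Pi(x;y) := \prod_{i,j}\frac{(tx_iy_j;q)_\infty}{(x_iy_j;q)_\infty}
\end{equation*}
(Macdonald \cite[Chapter VI, (4.13)]{mac}) by applying it in enlarged variable sets and using the branching rule \eqref{eq:def_skewP} together with its $Q$-analogue to introduce the skew indices $\nu$ and $\mu$ one at a time.

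The first step is to establish a \emph{half-skew} Cauchy identity
\begin{equation*}
\sum_{\kappa \in \Y} P_{\kappa/\nu}(u;q,t)\,Q_\kappa(y;q,t) = Q_\nu(y;q,t)\,\Pi(u;y).
\end{equation*}
To get it I would apply the non-skew identity in combined $P$-variables $(x_1,\ldots,x_n,u_1,\ldots,u_p)$, so that the right-hand side factors as $\Pi(x;y)\Pi(u;y)$. Expanding $P_\kappa(x,u) = \sum_\nu P_\nu(x)P_{\kappa/\nu}(u)$ on the left via \eqref{eq:def_skewP} and $\Pi(x;y) = \sum_\nu P_\nu(x)Q_\nu(y)$ on the right via a second use of Cauchy, I equate coefficients in the basis $\{P_\nu(x)\}_{\nu \in \Y_n}$ of $\La_n$.

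For the full skew identity I would apply this half-skew identity in combined $Q$-variables $(y,v)$:
\begin{equation*}
\sum_\kappa P_{\kappa/\nu}(u)\,Q_\kappa(y,v) = Q_\nu(y,v)\,\Pi(u;y)\,\Pi(u;v).
\end{equation*}
Two applications of the $Q$-analogue of \eqref{eq:def_skewP} give $Q_\kappa(y,v) = \sum_\mu Q_{\kappa/\mu}(v)Q_\mu(y)$ and $Q_\nu(y,v) = \sum_\la Q_{\nu/\la}(v)Q_\la(y)$, so the left-hand side becomes $\sum_\mu Q_\mu(y)\bigl[\sum_\kappa P_{\kappa/\nu}(u)Q_{\kappa/\mu}(v)\bigr]$. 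On the right-hand side, I would recognize the product $Q_\la(y)\Pi(u;y)$ as itself a half-skew sum, namely $\sum_\sigma P_{\sigma/\la}(u)Q_\sigma(y)$; substituting this reorganizes the right-hand side as a combination of $\{Q_\sigma(y)\}_\sigma$ whose coefficient of $Q_\mu(y)$ is $\Pi(u;v)\sum_\la Q_{\nu/\la}(v)P_{\mu/\la}(u)$. Matching coefficients of $Q_\mu(y)$ and relabelling $u \to x$, $v \to y$ yields the stated identity.

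The only genuine subtlety — and really the only obstacle — is justifying the two coefficient extractions. The skew polynomials must be viewed as polynomials in the auxiliary variables $u$ or $v$ with coefficients in an ambient symmetric-polynomial ring, and the number of variables one extracts against must be taken large enough to rule out accidental linear dependences among the $P_\nu$'s or $Q_\mu$'s appearing. Since skew Macdonald polynomials are stable under appending zero arguments, I can always enlarge $n$ or $m$ to arrange this and then specialise back at the end, making this a routine bookkeeping check rather than a genuine difficulty.
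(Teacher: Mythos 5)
Your proof is correct. A useful point of context: the paper does not actually prove this proposition — it states the skew Cauchy identity as known, implicitly citing Macdonald's book \cite[Chapter VI]{mac}, where it is derived via the operator formalism (the skew $Q$'s are defined as adjoints of multiplication by $P$'s with respect to the Macdonald inner product, and the skew Cauchy identity then drops out of the reproducing-kernel property). Your derivation takes the alternative, more combinatorial route of bootstrapping from the non-skew Cauchy identity by splitting variable sets and matching coefficients in the $\{P_\nu(x)\}$ and then $\{Q_\mu(y)\}$ bases, which is also standard in the literature (it is essentially how one reduces the skew to the non-skew identity in, e.g., the treatment of Macdonald processes). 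Both steps of your argument check out: applying the ordinary Cauchy identity in variables $(x,u)$ against $y$, expanding $P_\kappa(x,u)=\sum_\nu P_\nu(x)P_{\kappa/\nu}(u)$ and $\Pi(x;y)=\sum_\nu P_\nu(x)Q_\nu(y)$, and extracting the coefficient of $P_\nu(x)$ gives the half-skew identity; applying that with $Q$-variables $(y,v)$, expanding $Q_\kappa(y,v)$ and $Q_\nu(y,v)$ by branching, reusing the half-skew identity on $Q_\la(y)\Pi(u;y)$, and extracting the coefficient of $Q_\mu(y)$ gives the full identity. The caveat you flag about the coefficient extraction is real but, as you say, routine: for $\nu$ of length at most $n$ the polynomials $P_\nu(x_1,\ldots,x_n)$ are genuinely linearly independent and $P_\nu$ vanishes for longer $\nu$, so there is no accidental dependence to rule out — one simply enlarges the number of auxiliary variables as needed to cover any desired $\nu$, $\mu$, and uses that all the sums are locally finite degree by degree.
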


The above identity should be interpreted as an identity of formal power series in the variables, after expanding the $1/(1-q^\ell x_iy_j)$ factors as geometric series. It may be seen as partial motivation for the definition of the $Q$ polynomials earlier: the constant factors there are the ones needed for such an identity to hold. For later convenience we set
\begin{equation}\label{eq:def_cauchy_kernel}
     \Pi_{q,t}(\bx;\by) := \prod_{\substack{1 \leq i \leq n \\ 1 \leq j \leq m}} \frac{(tx_iy_j;q)_\infty }{(x_iy_j;q)_\infty} = \exp\left(\sum_{\ell = 1}^\infty \frac{1-t^\ell}{1-q^\ell}\frac{1}{\ell}p_\ell(\bx)p_\ell(\by)\right),
\end{equation}
where the second equality in \eqref{eq:def_cauchy_kernel} is not immediate but is shown in \cite{mac}. 

The skew Macdonald polynomials may also be made explicit, which is needed for later computations.

\begin{defi}\label{def:psi_varphi}
For $\la, \mu \in \Y$, let $f(u) := (tu;q)_\infty/(qu;q)_\infty$ and define 
\begin{equation}\label{eq:pbranch}
    \psi_{\la/\mu}(q,t) :=  \bbone[\mu \prec \la]\prod_{1 \leq i \leq j \leq \len(\la) } \frac{f(t^{j-i}q^{\mu_i-\mu_j})f(t^{j-i}q^{\la_i-\la_{j+1}})}{f(t^{j-i}q^{\la_i-\mu_j})f(t^{j-i}q^{\mu_i-\la_{j+1}})}
\end{equation}
and
\begin{equation}\label{eq:qbranch}
    \varphi_{\la/\mu}(q,t) := \bbone[\mu \prec \la]\prod_{1 \leq i \leq j \leq \len(\mu)} \frac{f(t^{j-i}q^{\la_i-\la_j})f(t^{j-i}q^{\mu_i-\mu_{j+1}})}{f(t^{j-i}q^{\la_i-\mu_j})f(t^{j-i}q^{\mu_i-\la_{j+1}})}.
\end{equation}
When $q$ and $t$ are clear from context we will often write $\psi_{\la/\mu}$ and $\varphi_{\la/\mu}$ without arguments.
\end{defi}

\begin{prop}[{\cite[VI.7, (7.13) and (7.13')]{mac}}] \label{thm:branching_formulas}
For $\la \in \Y_n, \mu \in \Y_{n-k}$, we have
\begin{equation}\label{eq:skewP_branch_formula}
    P_{\la/\mu}(x_1,\ldots,x_k;q,t) = \sum_{\mu = \la^{(0)} \prec \la^{(1)} \prec \cdots \prec \la^{(k)}= \la} \prod_{i=1}^{k-1} x_i^{|\la^{(i+1)}|-|\la^{(i)}|}\psi_{\la^{(i+1)}/\la^{(i)}}
\end{equation}
and
\begin{equation}\label{eq:skewQ_branch_formula}
    Q_{\la/\mu}(x_1,\ldots,x_k;q,t) = \sum_{\mu = \la^{(0)} \prec \la^{(1)} \prec \cdots \prec \la^{(k)}=\la} \prod_{i=1}^{k-1} x_i^{|\la^{(i+1)}|-|\la^{(i)}|}\varphi_{\la^{(i+1)}/\la^{(i)}}.
\end{equation}
\end{prop}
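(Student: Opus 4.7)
The plan is to reduce the statement to the one-variable case by induction on $k$, using the iterated branching rule \eqref{eq:gen_branch}. The base case $k=1$ asserts that $P_{\la/\mu}(y;q,t) = \bbone[\mu \prec \la]\,\psi_{\la/\mu}(q,t)\, y^{|\la|-|\mu|}$, equivalently (via \eqref{eq:def_skewP}) that the restriction in one new variable expands as
\begin{equation*}
P_\la(x_1,\ldots,x_n,y;q,t) = \sum_{\mu \prec \la} \psi_{\la/\mu}(q,t)\, y^{|\la|-|\mu|}\, P_\mu(x_1,\ldots,x_n;q,t).
\end{equation*}
This is the real content: to prove it I would follow Macdonald and use the intrinsic characterization of the $P_\la$ from \Cref{def:mac_poly_torus}, either via orthogonality on the torus or (equivalently) via the Macdonald difference operators which have $P_\la$ as eigenfunctions with eigenvalues determined by $\la$. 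Matching coefficients of $y^{|\la|-|\mu|}$ in the eigenvalue equation after specializing one variable pins down the constant multiplying $P_\mu(x_1,\ldots,x_n;q,t)$, and an explicit calculation identifies it with the combinatorial expression $\psi_{\la/\mu}(q,t)$ defined in \eqref{eq:pbranch}.

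Given the one-variable branching, the multivariable formula \eqref{eq:skewP_branch_formula} follows by induction on $k$. Applying \eqref{eq:gen_branch} with $d = k-1$ gives
\begin{equation*}
P_{\la/\mu}(x_1,\ldots,x_k;q,t) = \sum_{\nu} P_{\la/\nu}(x_k;q,t)\, P_{\nu/\mu}(x_1,\ldots,x_{k-1};q,t);
\end{equation*}
substituting the $k=1$ formula for $P_{\la/\nu}(x_k;q,t)$ forces $\nu \prec \la$, while the inductive hypothesis expands $P_{\nu/\mu}(x_1,\ldots,x_{k-1};q,t)$ as a sum over chains $\mu = \la^{(0)} \prec \cdots \prec \la^{(k-1)} = \nu$. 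Setting $\la^{(k)}:=\la$ assembles these into the sum over length-$k$ chains stated in \eqref{eq:skewP_branch_formula}.

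The $Q$ version \eqref{eq:skewQ_branch_formula} is then obtained from the $P$ version: comparing the $P$- and $Q$-forms of \eqref{eq:def_skewP} and using $Q_\nu = b_\nu P_\nu$ yields $Q_{\la/\mu} = (b_\la/b_\mu)\,P_{\la/\mu}$, and a direct inspection of the explicit product formula for $b_\la$ (cf.\ \Cref{def:Q}) shows that the ratio $b_{\la^{(i+1)}}/b_{\la^{(i)}}$ exactly converts each factor $\psi_{\la^{(i+1)}/\la^{(i)}}$ to $\varphi_{\la^{(i+1)}/\la^{(i)}}$ along a chain, so the chain-sum transports correctly. The main obstacle is the one-variable branching, because the coefficient $\psi_{\la/\mu}(q,t)$ is a rather intricate product of $q,t$-rational factors, and extracting its precise form from the orthogonality characterization requires either careful bookkeeping in a residue/contour computation or a parallel analysis via the Macdonald operator. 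Once this explicit identification of the $k=1$ coefficient is in hand, the rest of the argument is essentially formal.
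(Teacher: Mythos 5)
The paper does not prove \Cref{thm:branching_formulas}; it cites Macdonald's book directly, so there is no in-paper argument to compare against. Your outline correctly traces the structure of the standard derivation there: the single-variable branching $P_{\la/\mu}(y;q,t) = \bbone[\mu\prec\la]\,\psi_{\la/\mu}\, y^{|\la|-|\mu|}$ is the real content, the multi-variable statement follows by iterating \eqref{eq:gen_branch}, and the $Q$-version follows from $Q_{\la/\mu} = (b_\la/b_\mu)P_{\la/\mu}$ together with $\varphi_{\nu/\kappa} = (b_\nu/b_\kappa)\psi_{\nu/\kappa}$, so that the $b$-ratios telescope along the chain. The one caveat is that your sketch of how to actually establish the one-variable case — matching coefficients against the orthogonality or the Macdonald-operator eigenvalue equation — is both vague and not quite Macdonald's route: he derives the Pieri rule for multiplication by $g_r$ via the automorphism $\omega_{q,t}$ and an adjointness/generating-function computation, and then reads off the branching coefficient from it. Your plan would need that step fleshed out before it counts as a proof, but the overall architecture is right. (Unrelated to your argument: as printed, the product in \eqref{eq:skewP_branch_formula}--\eqref{eq:skewQ_branch_formula} runs over $i=1,\ldots,k-1$, which drops both the variable $x_k$ and the factor $\psi_{\la^{(1)}/\la^{(0)}}$; the correct product, which is also what your induction produces, is $\prod_{i=1}^{k} x_i^{|\la^{(i)}|-|\la^{(i-1)}|}\,\psi_{\la^{(i)}/\la^{(i-1)}}$.)
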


The explicit forms of the Hall-Littlewood and $q$-Whittaker special cases of the formulas in \Cref{def:psi_varphi} will be useful, and are easy to establish by direct computation from \Cref{def:psi_varphi}. 

\begin{lemma}\label{thm:hl_qw_branch_formulas}
Let $\la,\mu \in \Y$ with $\mu \prec \la$. In the Hall-Littlewood case $q=0$ the formulas of \Cref{def:psi_varphi} specialize to
\begin{align}\label{eq:HL_branch}
\begin{split}
\psi_{\la/\mu}(0,t) &= \prod_{\substack{i > 0\\ m_i(\mu) = m_i(\la)+1}} (1-t^{m_i(\mu)})\\ 
\varphi_{\la/\mu}(0,t) &= \prod_{\substack{i > 0\\ m_i(\la) = m_i(\mu)+1}} (1-t^{m_i(\la)}).
\end{split}
\end{align}
In the $q$-Whittaker case $t=0$ they specialize to 
\begin{align}\label{eq:qw_branch}
\begin{split}
\psi_{\la/\mu}(q,0) &= \prod_{i=1}^{\len(\mu)} \sqbinom{\la_i-\la_{i+1}}{\la_i-\mu_i}_q\\ 
\varphi_{\la/\mu}(q,0) &= \frac{1}{(q;q)_{\la_1-\mu_1} }\prod_{i=1}^{\len(\la)-1} \sqbinom{\mu_i-\mu_{i+1}}{\mu_i-\la_{i+1}}_q.
\end{split}
\end{align}
For pairs $\mu,\la$ with $\mu \not \prec \la$, all of the above are $0$.
\end{lemma}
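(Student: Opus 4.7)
The plan is a direct substitution into the formulas of \Cref{def:psi_varphi}, using the simplifications of $f(u) = (tu;q)_\infty/(qu;q)_\infty$ at $q=0$ and at $t=0$. At $q=0$ the second Pochhammer collapses to $1$ and the first to $1-tu$, giving $f(u)|_{q=0} = 1-tu$; at $t=0$ the first Pochhammer collapses to $1$, giving $f(u)|_{t=0} = 1/(qu;q)_\infty$. The case $\mu \not\prec \la$ is immediate from the indicators in \eqref{eq:pbranch} and \eqref{eq:qbranch}, so I assume $\mu \prec \la$; this forces every exponent of $q$ appearing in the products to be a nonnegative integer.

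\textbf{Hall--Littlewood case.} Each factor $f(t^{j-i}q^a)|_{q=0} = 1 - t^{j-i+1}q^a$ equals $1-t^{j-i+1}$ when $a=0$ and equals $1$ when $a\geq 1$. Thus in $\psi_{\la/\mu}$ and $\varphi_{\la/\mu}$ only those pairs $(i,j)$ for which the relevant difference of parts vanishes contribute nontrivially. The plan is to group these surviving factors by the common value $v$ of the two equal parts, then verify that the multiplicities $m_v(\la)$ and $m_v(\mu)$ control the cumulative contribution. A diagonal-by-diagonal cancellation shows that after combining the numerator and denominator factors attached to a given $v$, the net contribution to $\psi_{\la/\mu}(0,t)$ is $(1-t^{m_v(\mu)})$ precisely when $m_v(\mu)=m_v(\la)+1$ and is trivial otherwise, while the same computation for $\varphi_{\la/\mu}(0,t)$ produces $(1-t^{m_v(\la)})$ when $m_v(\la)=m_v(\mu)+1$.

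\textbf{$q$-Whittaker case.} Since $t^{j-i}=0$ for $j>i$, the argument $t^{j-i}q^a$ vanishes off the diagonal and $f(0)=1$, so only the diagonal $i=j$ factors survive. On the diagonal, $f(q^a)|_{t=0} = 1/(q^{a+1};q)_\infty$, and ratios telescope via $(q^{b+1};q)_\infty/(q^{a+1};q)_\infty = (q;q)_a/(q;q)_b$ for $a\geq b$. A direct reduction shows the $i$-th factor of $\psi_{\la/\mu}$ becomes
\[
\frac{(q;q)_{\la_i-\la_{i+1}}}{(q;q)_{\la_i-\mu_i}\,(q;q)_{\mu_i-\la_{i+1}}} \;=\; \sqbinom{\la_i-\la_{i+1}}{\la_i-\mu_i}_q,
\]
using $\la_i-\la_{i+1}=(\la_i-\mu_i)+(\mu_i-\la_{i+1})$ by interlacing. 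The analogous reduction for $\varphi_{\la/\mu}$ yields $(q;q)_{\mu_i-\mu_{i+1}}/[(q;q)_{\la_i-\mu_i}(q;q)_{\mu_i-\la_{i+1}}]$; extracting the $q$-binomial $\sqbinom{\mu_i-\mu_{i+1}}{\mu_i-\la_{i+1}}_q$ leaves the leftover ratio $(q;q)_{\la_{i+1}-\mu_{i+1}}/(q;q)_{\la_i-\mu_i}$, whose product over $i$ telescopes to the boundary prefactor $1/(q;q)_{\la_1-\mu_1}$.

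\textbf{Main obstacle.} The work is essentially bookkeeping rather than conceptual. In the Hall--Littlewood case the delicate step is organizing the four product types (two in the numerator, two in the denominator) by the shared value of equal parts, and then using the interlacing inequalities $\la_i\geq\mu_i\geq\la_{i+1}$ to pin down which multiplicity survives each cancellation; correctness requires checking that factors of $(1-t^k)$ appearing with the ``wrong'' power $k$ cancel exactly between the numerator and denominator blocks. In the $q$-Whittaker case the bookkeeping is lighter but one must still treat the boundary terms (where $\la$ or $\mu$ has parts equal to $0$) with care to ensure that the telescoping picks up exactly the factor $(q;q)_{\la_1-\mu_1}^{-1}$ rather than a stray boundary remainder.
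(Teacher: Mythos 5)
The paper itself offers no proof, only the remark that the formulas are ``easy to establish by direct computation from \Cref{def:psi_varphi},'' and your direct-substitution strategy is exactly this standard Macdonald-style argument: so at the level of approach you coincide with the paper. Your $q$-Whittaker calculation is essentially complete, and your Hall--Littlewood sketch (group the surviving $q=0$ factors by the shared part value $v$, then cancel) is the right mechanism, though it would need to be written out to constitute a proof. One point you flag deserves a concrete resolution rather than a caveat: the telescoping you describe for $\varphi_{\la/\mu}(q,0)$,
\begin{equation*}
\prod_i \frac{(q;q)_{\la_{i+1}-\mu_{i+1}}}{(q;q)_{\la_i-\mu_i}},
\end{equation*}
leaves the remainder $(q;q)_{\la_{\len(\mu)+1}-\mu_{\len(\mu)+1}}/(q;q)_{\la_1-\mu_1}$, and when $\len(\la)=\len(\mu)+1$ this is $(q;q)_{\la_{\len(\la)}}/(q;q)_{\la_1-\mu_1}$, not $1/(q;q)_{\la_1-\mu_1}$. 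This stray factor is cancelled precisely by the extra diagonal term $i=j=\len(\la)$ that appears when the product in the definition of $\varphi_{\la/\mu}$ runs up to $\len(\la)$ (as in Macdonald VI, (6.24')) rather than $\len(\mu)$ as printed in the paper's Definition~\ref{def:psi_varphi}; with that (apparently typographical) correction your telescoping goes through and matches the stated product $\prod_{i=1}^{\len(\la)-1}$.
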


We will require two, in a sense orthogonal, extensions of Macdonald polynomials: symmetric Laurent polynomials in finitely many variables, and symmetric functions---informally, symmetric polynomials in infinitely many variables. 

\subsection{Symmetric Laurent polynomials.} We wish to extend the indices of Macdonald polynomials in $n$ variables from the set $\Y_n$ of partitions of length at most $n$ to the set $\Sig_n$ of signatures of length $n$, where we recall $\Sig_n := \{(x_1,\ldots,x_n) \in \Z^n: x_1 \geq \ldots \geq x_n\}$. We also often identify $\Y_n$ with the subset
\begin{equation}
    \Sig_n^+ := \{(x_1,\ldots,x_n) \in \Z^n: x_1 \geq \ldots \geq x_n \geq 0\} \subset \Sig_n
\end{equation}
of nonnegative signatures. Recall that for signatures $\mu \in \operatorname{Sig}_{k-1}, \lambda \in \operatorname{Sig}_{k}$ we write
$$
\mu \prec \lambda \Longleftrightarrow \lambda_{1} \geq \mu_{1} \geq \lambda_{2} \geq \ldots \geq \mu_{k-1} \geq \lambda_{k},
$$
$|\mu|=\sum_{i} \mu_{i}$, and $\mu-(d[k-1])=\left(\mu_{1}-d, \ldots, \mu_{k-1}-d\right)$.

\begin{lemma}\label{thm:P_shift_property}
Let $\la \in \Y_n, \mu \in \Y_{n-k}$, and $d \in \Z_{\geq 0}$. Then 
\begin{equation}
P_{(\la+(d[n]))/(\mu+(d[n-k]))}(x_1,\ldots,x_k;q,t) = (x_1 \cdots x_k)^d P_{\la/\mu}(x_1,\ldots,x_k;q,t).
\end{equation}
\end{lemma}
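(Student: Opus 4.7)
The plan is to prove the identity from the branching formula for skew Macdonald polynomials (\Cref{thm:branching_formulas}). Both sides expand as sums over interlacing chains of partitions connecting the respective boundary partitions, so it will suffice to establish a bijection between these chains under which the summands match up to a global factor of $(x_1 \cdots x_k)^d$.

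The bijection I would use sends a chain $\mu = \la^{(0)} \prec \la^{(1)} \prec \cdots \prec \la^{(k)} = \la$ with $\la^{(r)} \in \Y_{n-k+r}$ to the chain $\mu + (d[n-k]) = \nu^{(0)} \prec \cdots \prec \nu^{(k)} = \la + (d[n])$ defined by $\nu^{(r)} := \la^{(r)} + (d[n-k+r])$. Interlacing is preserved in both directions because each defining inequality $\la^{(r+1)}_i \geq \la^{(r)}_i \geq \la^{(r+1)}_{i+1}$ is stable under adding $d$ to both sides. For the inverse to be well defined one needs $\nu^{(r)}_{n-k+r} \geq d$, which follows from the chain of interlacing inequalities
\[
\nu^{(r)}_{n-k+r} \geq \nu^{(r+1)}_{n-k+r+1} \geq \cdots \geq \nu^{(k)}_n = \la_n + d \geq d.
\]
Under the bijection one has $|\nu^{(r+1)}| - |\nu^{(r)}| = |\la^{(r+1)}| - |\la^{(r)}| + d$, so the monomial factor $\prod_{r=0}^{k-1} x_{r+1}^{|\nu^{(r+1)}| - |\nu^{(r)}|}$ in the branching formula for the shifted chain equals $(x_1 \cdots x_k)^d$ times the corresponding monomial factor for the unshifted chain.

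The remaining step, which I expect to be the main technical obstacle, is to verify shift-invariance of the $\psi$ coefficient, namely $\psi_{\nu^{(r+1)}/\nu^{(r)}}(q,t) = \psi_{\la^{(r+1)}/\la^{(r)}}(q,t)$. Inspecting the product formula \eqref{eq:pbranch}, every argument of $f$ is a $q$-power of a difference between two parts drawn from the pair of partitions. For indices within the first $n-k+r$ positions, where both partitions of the pair have been shifted by the same $d$, all such differences are manifestly preserved. The delicate case concerns factors indexed by the trailing position $j = n-k+r+1$, and any additional terms in the shifted product that arise because $\nu^{(r+1)}$ may have larger effective length than $\la^{(r+1)}$; in each such boundary case a direct check shows that the numerator and denominator each reduce to the same product of two $f(\cdot)$ values and cancel to $1$, both before and after the shift. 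Combining this invariance with the bijection and the monomial factor computation then yields the desired identity term by term.
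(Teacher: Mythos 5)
Your proposal is correct and takes essentially the same approach as the paper's: both prove the identity by expanding via the branching formula \eqref{eq:skewP_branch_formula} and using shift-invariance of $\psi$. The paper leaves the chain bijection, the monomial-factor bookkeeping, and the length-boundary cancellations in \eqref{eq:pbranch} as observations for the reader, whereas you have written out these steps explicitly.
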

\begin{proof}
The claim follows from \eqref{eq:skewP_branch_formula} together with the observation from the explicit formula \eqref{eq:pbranch} that for $\la^{(i)}$ as in \eqref{eq:skewP_branch_formula},
\begin{equation}\label{eq:psi_invariant}
\psi_{\la^{(i+1)}/\la^{(i)}}(q,t) = \psi_{(\la^{(i+1)}+(d[n-k+i+1]))/(\la^{(i)}+(d[n-k+i]))}.
\end{equation}
\end{proof}

This motivates the following.

\begin{defi}\label{def:mac_laurent}
For any $\la \in \Sig_n, \nu \in \Sig_{n-1}$ we define
\begin{equation}\label{eq:pbranch_sig}
    \psi_{\la/\nu}(q,t) :=  \bbone[\nu \prec \la]\prod_{1 \leq i \leq j \leq n-1} \frac{f(t^{j-i}q^{\nu_i-\nu_j})f(t^{j-i}q^{\la_i-\la_{j+1}})}{f(t^{j-i}q^{\la_i-\nu_j})f(t^{j-i}q^{\nu_i-\la_{j+1}})}
\end{equation}
and for $\la \in \Sig_n, \mu \in \Sig_{n-k}$ define 
\begin{equation}\label{eq:skewP_branch_formula_sig}
    P_{\la/\mu}(x_1,\ldots,x_k;q,t) := \sum_{\mu = \la^{(0)} \prec \la^{(1)} \prec \cdots \prec \la^{(k)}= \la} \prod_{i=1}^{k-1} x_i^{|\la^{(i+1)}|-|\la^{(i)}|}\psi_{\la^{(i+1)}/\la^{(i)}}.
\end{equation}
\end{defi}

\begin{cor}\label{thm:signature_shift}
Let $\la \in \Sig_n, \mu \in \Sig_{n-k}$, and $d \in \Z$. Then 
\begin{equation}
P_{(\la+(d[n]))/(\mu+(d[n-k]))}(x_1,\ldots,x_k;q,t) = (x_1 \cdots x_k)^d P_{\la/\mu}(x_1,\ldots,x_k;q,t).
\end{equation}
\end{cor}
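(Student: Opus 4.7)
The statement is the signature-level analog of \Cref{thm:P_shift_property} extended to all $d \in \Z$ (including negative shifts), and my plan is to prove it by exactly the same strategy. The single input needed is the shift-invariance identity
\begin{equation*}
\psi_{(\la+(d[n]))/(\nu+(d[n-1]))}(q,t) = \psi_{\la/\nu}(q,t)
\end{equation*}
for any $\la \in \Sig_n$, $\nu \in \Sig_{n-1}$, and $d \in \Z$. This is immediate from the explicit formula \eqref{eq:pbranch_sig}: the exponents of $q$ appearing there are the four differences $\nu_i - \nu_j$, $\la_i - \la_{j+1}$, $\la_i - \nu_j$, $\nu_i - \la_{j+1}$, each of which is unchanged when $d$ is added to every part of both $\la$ and $\nu$. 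The interlacing indicator $\bbone[\nu \prec \la]$ is likewise invariant, because interlacing is a chain of comparisons between parts.

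Given this, I would expand the left-hand side using \eqref{eq:skewP_branch_formula_sig} as a sum over interlacing chains $\mu + (d[n-k]) = \tilde{\la}^{(0)} \prec \tilde{\la}^{(1)} \prec \cdots \prec \tilde{\la}^{(k)} = \la + (d[n])$. The map $\tilde{\la}^{(i)} = \la^{(i)} + (d[n-k+i])$ gives a bijection between such chains and interlacing chains from $\mu$ to $\la$ appearing on the right-hand side (again because interlacing is invariant under uniform additive shifts of both signatures). Under this bijection each $\psi$ factor is preserved by the first observation, while one computes $|\tilde{\la}^{(i)}| - |\tilde{\la}^{(i-1)}| = |\la^{(i)}| - |\la^{(i-1)}| + d$, so the monomial weight picks up an additional factor of $x_i^d$ at each step. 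Summing over $i$ gives the overall prefactor $(x_1 \cdots x_k)^d$, matching the claim term by term.

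There is no real obstacle: the entire content is the observation that \eqref{eq:pbranch_sig} depends only on differences of parts, so the signature-level formula is automatically compatible with integer translations in either direction. The partition-level Lemma \Cref{thm:P_shift_property} for $d \geq 0$ is subsumed by this argument; the only genuinely new ingredient is that \Cref{def:mac_laurent} has extended both \eqref{eq:pbranch_sig} and \eqref{eq:skewP_branch_formula_sig} to signatures in a manner that preserves this shift-invariance verbatim.
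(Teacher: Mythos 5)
Your proposal is correct and is essentially the paper's own proof: the paper proves the $d\geq 0$ partition version (\Cref{thm:P_shift_property}) by exactly the chain expansion plus $\psi$-shift-invariance argument you give, and then declares the signature version a verbatim repeat, which works because at the signature level \eqref{eq:skewP_branch_formula_sig} is a definition and \eqref{eq:pbranch_sig} manifestly depends only on differences of parts.
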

\begin{proof}
Same as \Cref{thm:P_shift_property}.
\end{proof}

\begin{rmk}\label{rmk:don't_sig_Q}
We have not stated \Cref{thm:P_shift_property} and \Cref{def:mac_laurent} for the dual Macdonald polynomials, for the simple reasons that (1) the naive versions do not hold, and (2) we do not need this for our proofs. In fact, $\varphi_{\la/\nu}(q,t)$ and the skew polynomials $Q_{\la/\mu}(\bx;q,t)$ must be reinterpreted in a more nontrivial way in order to make such statements true, see \cite[Section 2.1]{van2020limits}.
\end{rmk}

\begin{rmk}
Note that for $\la \in \Sig_n, \mu \in \Sig_{n-1}$ the formula for $\psi_{\la/\mu}(q,0)$ in \Cref{thm:hl_qw_branch_formulas} continues to makes sense, while for the Hall-Littlewood case one should instead interpret
\begin{equation}\label{eq:hl_sig_pbranch}
\psi_{\la/\mu}(0,t) = \prod_{\substack{m_i(\mu) = m_i(\la)+1}} (1-t^{m_i(\mu)})
\end{equation}
(without the $i > 0$ restriction in the product) in order for the translation-invariance property \eqref{eq:psi_invariant} to hold. If $\la \in \Sig_n^+$ (and hence $\mu \in \Sig_{n-1}^+$ by interlacing) then both \eqref{eq:HL_branch} and \eqref{eq:hl_sig_pbranch} give the same result.
\end{rmk}

The defining orthogonality property of Macdonald polynomials also extends readily to their Laurent versions.

\begin{prop}\label{thm:laurent_orthogonality}
If $\la,\mu \in \Sig_n$ and $\la \neq \mu$, then 
\begin{equation}\label{eq:laurent_orthogonality_P}
\lan P_\la(\mbz;q,t), P_\mu(\mathbf{z};q,t) \ran'_{q,t;n} = 0.
\end{equation}
\end{prop}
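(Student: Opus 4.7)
The plan is to reduce the claim to the standard orthogonality of partition-indexed Macdonald polynomials (property (2) in \Cref{def:mac_poly_torus}) by exploiting the shift invariance of \Cref{thm:signature_shift}. Applied with $k=n$ and the empty signature as the ``skewing'' index, that corollary gives
\[
P_{\la + (d[n])}(\mbz;q,t) = (z_1 \cdots z_n)^d P_\la(\mbz;q,t) \qquad \text{for any } \la \in \Sig_n,\ d \in \Z,
\]
so multiplication by the Laurent monomial $(z_1 \cdots z_n)^d$ intertwines the Macdonald polynomials at $\la$ and at $\la + (d[n])$. The point is that this monomial has modulus $1$ on $\T^n$, which will make multiplication by it an isometry of the scalar product $\lan \cdot,\cdot\ran'_{q,t;n}$.

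Concretely, given $\la, \mu \in \Sig_n$ with $\la \neq \mu$, I would pick any integer $d \leq \min(\la_n, \mu_n)$ and set $\tilde\la := \la - (d[n])$, $\tilde\mu := \mu - (d[n]) \in \Y_n$, noting that $\tilde\la \neq \tilde\mu$ since $\la \neq \mu$. By \Cref{thm:signature_shift},
\[
P_\la(\mbz;q,t) = (z_1 \cdots z_n)^d P_{\tilde\la}(\mbz;q,t), \qquad P_\mu(\mbz;q,t) = (z_1 \cdots z_n)^d P_{\tilde\mu}(\mbz;q,t).
\]
Substituting these into the integral \eqref{eq:torus_product} and using $\overline{(z_1 \cdots z_n)^d} = (z_1 \cdots z_n)^{-d}$ on $\T^n$, the two monomial factors cancel pointwise, leaving
\[
\lan P_\la(\mbz;q,t), P_\mu(\mbz;q,t) \ran'_{q,t;n} = \lan P_{\tilde\la}(\mbz;q,t), P_{\tilde\mu}(\mbz;q,t) \ran'_{q,t;n},
\]
and the right-hand side vanishes by the defining orthogonality of $P_{\tilde\la}$ and $P_{\tilde\mu}$ for the two distinct partitions $\tilde\la,\tilde\mu \in \Y_n$.

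There is essentially no obstacle to this argument: the whole content is that multiplication by a unit-modulus Laurent monomial is an isometry of the torus scalar product, after which the claim falls out of the already-established partition case. It is worth noting, in line with \Cref{rmk:don't_sig_Q}, that this clean shift trick would \emph{not} work for the dual polynomials $Q_\la$, because the naive shift-invariance analogous to \Cref{thm:signature_shift} fails on the $Q$ side.
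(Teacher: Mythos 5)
Your argument is correct and is essentially identical to the paper's proof: both reduce to the partition case by shifting $\la$ and $\mu$ by a common multiple of $(1,\ldots,1)$, invoking \Cref{thm:signature_shift}, and observing that multiplication by the unit-modulus factor $(z_1\cdots z_n)^d$ on $\T^n$ cancels between $P_\la$ and $\overline{P_\mu}$ in the integral \eqref{eq:torus_product}. The only cosmetic difference is the sign convention on the shift ($d\le\min(\la_n,\mu_n)$ versus the paper's $D$ with $\la+(D[n])\in\Sig_n^+$), which is immaterial.
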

\begin{proof}
Let $D \in \Z$ be such that $\la + (D[n])$ and $\mu+(D[n])$ both lie in $\Sig_n^+$. Then 
\begin{equation}
\lan P_{\la + (D[n])}(\mbz;q,t), P_{\mu+(D[n])}(\mbz;q,t) \ran'_{q,t;n} = 0
\end{equation} 
by the defining orthogonality property of Macdonald polynomials. However,
\begin{align}
\begin{split}
P_{\la + (D[n])}(\mbz;q,t) \overline{P_{\mu + (D[n])}(\mbz)} &= (z_1 \cdots z_n)^D P_\la(\mbz;q,t) \overline{(z_1 \cdots z_n)^D P_\mu(\mbz;q,t)} \\ 
&= P_\la(\mbz;q,t)\overline{P_\mu(\mbz;q,t)}
\end{split}
\end{align}
for any $z_1,\ldots,z_n \in \T$, so 
\begin{equation}
\lan P_{\la + (D[n])}(\mbz;q,t), P_{\mu+(D[n])}(\mbz;q,t) \ran'_{q,t;n} = \lan P_\la(\mbz;q,t), P_\mu(\mbz;q,t) \ran'_{q,t;n},
\end{equation} 
which completes the proof.
\end{proof}

\subsection{Symmetric functions.} It is often convenient to consider symmetric polynomials in an arbitrarily large or infinite number of variables, which we formalize as follows, heavily borrowing from the introductory material in \cite{van2022q}. One has a chain of maps
\[
\cdots \to \La_{n+1} \to \La_n \to \La_{n-1} \to \cdots \to 0
\]
where the map $\La_{n+1} \to \La_n$ is given by setting $x_{n+1}$ to $0$.
In fact, writing $\La_n^{(d)}$ for symmetric polynomials in $n$ variables of total degree $d$, one has 
\[
\cdots \to \La_{n+1}^{(d)} \to \La_n^{(d)} \to \La_{n-1}^{(d)} \to \cdots \to 0
\]
with the same maps. The inverse limit $\La^{(d)}$ of these systems may be viewed as symmetric polynomials of degree $d$ in infinitely many variables. From the ring structure on each $\La_n$ one gets a natural ring structure on $\La := \bigoplus_{d \geq 0} \La^{(d)}$, and we call this the \emph{ring of symmetric functions}. Because $p_k(x_1,\ldots,x_{n+1}) \mapsto p_k(x_1,\ldots,x_n)$ and $m_\la(x_1,\ldots,x_{n+1}) \mapsto m_\la(x_1,\ldots,x_n)$ (for $n \geq \len(\la)$) under the natural map $\La_{n+1} \to \La_n$, these families of symmetric polynomials define symmetric functions $p_k, m_\la \in \La$. An equivalent definition of $\La$ is $\Lambda := \C[p_1,p_2,\ldots]$ where $p_i$ are indeterminates; under the natural map $\Lambda \to \Lambda_n$ one has $p_i \mapsto p_i(x_1,\ldots,x_n)$.

The Macdonald polynomials satisfy a consistency property 
\begin{equation}
P_\la(x_1,\ldots,x_n,0;q,t) = P_\la(x_1,\ldots,x_n;q,t)
\end{equation}
for any $\la \in \Y$ (and similarly for the dual and skew polynomials). Hence here exist \emph{Macdonald symmetric functions}, denoted $P_\la,Q_\la$ as well, such that $P_\la \mapsto P_\la(\bx;q,t)$ under the natural map $\Lambda \to \Lambda_n$. Macdonald symmetric functions satisfy the skew Cauchy identity
\begin{multline}\label{eq:infinite_cauchy}
    \sum_{\kappa \in \Y} P_{\kappa/\nu}(\bx;q,t)Q_{\kappa/\mu}(\by;q,t) \\
    = \exp\left(\sum_{\ell = 1}^\infty \frac{1-t^\ell}{1-q^\ell} \frac{1}{\ell}p_\ell(\bx)p_\ell(\by)\right) \sum_{\la \in \Y} Q_{\nu/\la}(\by;q,t) P_{\mu/\la}(\bx;q,t).
\end{multline}
Here $P_{\kappa/\nu}(\bx;q,t)$ is an element of $\La$, a polynomial in $p_1(\bx),p_2(\bx),\ldots \in \La$, and summands such as $P_{\kappa/\nu}(\bx;q,t)Q_{\kappa/\mu}(\by;q,t)$ are interpreted as elements of a ring $\La \ot \La$ and both sides interpreted as elements of a completion thereof.

To get a probability measure on $\cP$ from the skew Cauchy identity, we would like homomorphisms $\phi: \La \to \C$ which take $P_\la$ and $Q_\la$ to $\R_{\geq 0}$---here we recall that we take $q,t \in (-1,1)$. Simply plugging in nonnegative real numbers for the variables in \eqref{eq:finite_cauchy} works, but does not yield all of them. However, a full classification of such homomorphisms, called \emph{Macdonald nonnegative specializations} of $\La$, was conjectured by Kerov \cite{kerov1992generalized} and proven by Matveev \cite{matveev2019macdonald}. We describe them now: they are associated to triples of $\{\alpha_n\}_{n \geq 1}, \{\beta_n\}_{n \geq 1},\tau$ (the Plancherel parameter) such that $\tau \geq 0$, $0 \leq \alpha_n,\beta_n $ for all $ n \geq 1$, and $\sum_n \alpha_n, \sum_n \beta_n < \infty$. These are typically called usual (or alpha) parameters, dual (or beta) parameters, and the Plancherel parameter\footnote{The terminology `Plancherel' comes from the fact that in the case $q=t$ where the Macdonald polynomials reduce to Schur polynomials, the Plancherel specialization is related to (the poissonization of) the Plancherel measure on irreducible representations of the symmetric group $S_N$, see \cite{borodin2017representations}.} respectively. Given such a triple, the corresponding specialization is defined by 
\begin{align}\label{eq:p_specs}
\begin{split}
    p_1 &\mapsto \sum_{n \geq 1} \alpha_n + \frac{1-q}{1-t}\left(\tau + \sum_{n \geq 1} \beta_n\right) \\
    p_k &\mapsto \sum_{n \geq 1} \alpha_n^k + (-1)^{k-1}\frac{1-q^k}{1-t^k}\sum_{n \geq 1} \beta_n^k \quad \quad \text{ for all }k \geq 2.
\end{split}
\end{align}
Note that the above formula defines a specialization for arbitrary tuples of reals $\alpha_n,\beta_n$ and $\tau$ satisfying convergence conditions, but it will not in general be nonnegative.

\begin{defi}\label{def:spec_notation}
For the specialization $\theta$ defined by the triple $\{\alpha_n\}_{n \geq 1}, \{\beta_n\}_{n \geq 1}, \tau$, we write
\begin{equation}\label{eq:spec_argument_notation}
P_\la(\alpha(\alpha_1,\alpha_2,\ldots),\beta(\beta_1,\beta_2,\ldots),\gamma(\tau);q,t) := P_\la(\theta;q,t) := \theta(P_\la)
\end{equation}
and similarly for skew and dual Macdonald polynomials. Likewise, for any other specialization $\phi$ defined by parameters $\{\alpha'_n\}_{n \geq 1}, \{\beta'_n\}_{n \geq 1}, \tau'$, we let
\begin{align}\label{eq:spec_cauchy_kernel}
\begin{split}
   \Pi_{q,t}(\alpha(\alpha_1,\ldots),\beta(\beta_1,\ldots),\gamma(\tau);\alpha(\alpha'_1,\ldots),\beta(\beta'_1,\ldots),\gamma(\tau')) &:= \Pi_{q,t}(\theta;\phi) \\ 
   &:= \exp\left(\sum_{\ell = 1}^\infty \frac{1-t^\ell}{1-q^\ell}\frac{1}{\ell}\theta(p_\ell)\phi(p_\ell)\right). 
\end{split}
\end{align}
We will omit the $\alpha(\cdots)$ in notation if all alpha parameters are zero for the given specialization, and similarly for $\beta$ and $\gamma$.
\end{defi}

We refer to a specialization as 
\begin{itemize}
    \item \emph{pure alpha} if $\tau$ and all $\beta_n, n \geq 1$ are $0$.
    \item \emph{pure beta} if $\tau$ and all $\alpha_n, n \geq 1$ are $0$.
    \item \emph{Plancherel} if all $\alpha_n,\beta_n, n \geq 1$ are $0$.
\end{itemize}

In our specific case, it is useful to define the Plancherel specialization for a general complex parameter as well, though this is \emph{not} a Macdonald-nonnegative specialization.

\begin{defi}\label{def:plancherel}
For any $\tau \in \C$, not necessarily nonnegative real, the corresponding Plancherel specialization of $\Lambda$ is given by
\begin{align}\label{eq:planch_spec}
\begin{split}
    p_1 &\mapsto \frac{1-q}{1-t} \tau \\
    p_k &\mapsto 0 \quad \quad \text{ for all }k \geq 2.
\end{split}
\end{align}
We write $\gamma(\tau)$ in the argument of symmetric functions for this specialization, as in \Cref{def:spec_notation}.
\end{defi}

On Macdonald polynomials, the pure specializations act as follows. 

\begin{prop}\label{thm:specialize_mac_poly}
Let $\la,\mu \in Y$ and $c_1,\ldots,c_n \in \R_{\geq 0}$. Then
\begin{align}\label{eq:spec_mac_pol}
\begin{split}
P_\la(\alpha(c_1,\ldots,c_n);q,t) &= P_\la(c_1,\ldots,c_n;q,t) \\ 
Q_\la(\alpha(c_1,\ldots,c_n);q,t) &= Q_\la(c_1,\ldots,c_n;q,t) \\ 
P_\la(\beta(c_1,\ldots,c_n);q,t) &= Q_{\la'}(c_1,\ldots,c_n;t,q) \\ 
Q_\la(\beta(c_1,\ldots,c_n);q,t) &= P_{\la'}(c_1,\ldots,c_n;t,q),
\end{split}
\end{align}
where in each case the left hand side is a specialized Macdonald symmetric function while the right hand side is a Macdonald polynomial with real numbers plugged in for the variables. Furthermore,
\begin{equation}\label{eq:alpha_gamma_limit}
P_\la(\gamma(\tau);q,t) = \lim_{D \to \infty} P_\la\left(\tau \cdot \frac{1-q}{1-t} \frac{1}{D}[D];q,t\right)
\end{equation}
and similarly for $Q$.
\end{prop}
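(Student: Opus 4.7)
The plan is to reduce every identity in the proposition to a computation on power-sum symmetric functions, using two facts: each $P_\la, Q_\la \in \La$ is a polynomial in $p_1, p_2, \ldots$, and every specialization considered is a ring homomorphism $\La \to \C$. Thus a specialization is completely determined by its values on $p_k$, and two specializations agreeing on all $p_k$ act identically on $P_\la$ and $Q_\la$.

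For the alpha identities: substituting $\alpha_i = c_i$, $\beta_n = 0$, $\tau = 0$ into \eqref{eq:p_specs} gives $p_k \mapsto \sum_{i=1}^n c_i^k = p_k(c_1,\ldots,c_n)$. The homomorphism $\La \to \C$ so defined agrees with the composition $\La \to \La_n \to \C$, where the first map is the natural projection (sending $p_k \mapsto p_k(x_1,\ldots,x_n)$) and the second is evaluation at $x_i = c_i$. By the consistency property of Macdonald polynomials stated just above \eqref{eq:infinite_cauchy}, $P_\la \in \La$ maps under the first arrow to $P_\la(x_1,\ldots,x_n;q,t)$, and then to $P_\la(c_1,\ldots,c_n;q,t)$ under evaluation; the same holds for $Q_\la$.

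For the beta identities: the analogous substitution yields $p_k \mapsto (-1)^{k-1}\tfrac{1-q^k}{1-t^k} p_k(c_1,\ldots,c_n)$. This is the composition of the classical $(q,t)$-involution $\omega_{q,t}: \La \to \La$, defined on power sums by $\omega_{q,t}(p_k) = (-1)^{k-1}\tfrac{1-q^k}{1-t^k} p_k$ (Macdonald VI, \S2, (2.14)), with the alpha specialization evaluated at $(c_1,\ldots,c_n)$ in parameters $(t,q)$ rather than $(q,t)$. The key algebraic input, which is the main nontrivial ingredient and which I would simply cite, is the identity $\omega_{q,t}(P_\la(\,\cdot\,;q,t)) = Q_{\la'}(\,\cdot\,;t,q)$, together with its $P \leftrightarrow Q$ counterpart, proved in Macdonald VI, (5.1). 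Combining this with the previous paragraph's alpha identities (applied in parameters $(t,q)$) immediately produces the claimed beta formulas.

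For the Plancherel limit: set $c_i^{(D)} = \tau \cdot \tfrac{1-q}{1-t} \cdot \tfrac{1}{D}$ for $1 \leq i \leq D$ and compute power sums. One gets $p_1(c_1^{(D)},\ldots,c_D^{(D)}) = \tau \cdot \tfrac{1-q}{1-t}$ independent of $D$, while for $k \geq 2$ one has $p_k(c_1^{(D)},\ldots,c_D^{(D)}) = D \cdot \bigl(\tau \tfrac{1-q}{1-t}\bigr)^k D^{-k} = O(D^{1-k}) \to 0$ as $D \to \infty$. These limiting values coincide exactly with $\gamma(\tau)$ applied to $p_k$ by \eqref{eq:planch_spec}. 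Since $P_\la \in \La$ is a polynomial in the finitely many generators $p_1,\ldots,p_{|\la|}$, and polynomial evaluation is continuous, the values $P_\la$ takes under the $D$-th alpha specialization converge to the value under $\gamma(\tau)$, and the same argument works for $Q_\la$. The only genuine obstacle in the entire proof is the involution identity in Macdonald VI, (5.1); everything else is bookkeeping with power sums and the ring-homomorphism property of specializations.
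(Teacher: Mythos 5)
Your proof is correct and follows exactly the route the paper indicates: the paper simply remarks that the $\alpha$ case and the Plancherel limit are immediate from \eqref{eq:p_specs}, and that the $\beta$ case follows from the $(q,t)$-involution in \cite[Chapter VI]{mac}; you have filled in those details correctly, including the correct identification of the $\beta$ specialization as $\omega_{q,t}$ composed with the $\alpha$ specialization and the correct citation of $\omega_{q,t}P_\la(\cdot;q,t)=Q_{\la'}(\cdot;t,q)$.
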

The alpha case of \eqref{eq:spec_mac_pol}, and \eqref{eq:alpha_gamma_limit}, are straightforward from \eqref{eq:p_specs}. The $\beta$ case follows from properties of a certain involution on $\La$, see \cite[Chapter VI]{mac}, and explains the terminology `dual parameter'.





We note that for any nonnegative specializations $\theta,\phi$ with 
\begin{equation}\label{eq:finiteness_cauchy}
    \sum_{\la \in \Y}P_\la(\theta;q,t)Q_\la(\phi;q,t) < \infty,
\end{equation}
the specialized Cauchy identity
\begin{equation}\label{eq:specialized_cauchy}
    \sum_{\kappa \in \Y} P_{\kappa/\nu}(\theta;q,t)Q_{\kappa/\mu}(\phi;q,t) \\
    =\Pi_{q,t}(\theta;\psi) \sum_{\la \in \Y} Q_{\nu/\la}(\phi;q,t) P_{\mu/\la}(\theta;q,t).
\end{equation}
holds by applying $\theta \ot \phi$ to \eqref{eq:infinite_cauchy}. Similarly, we have the branching rule for specializations: for $\la,\mu \in \Y$, 
\begin{equation}\label{eq:specialization_branch}
P_{\la/\mu}(\phi,\phi';q,t) = \sum_{\nu \in \Y: \mu \subset \nu \subset \la} P_{\la/\nu}(\phi;q,t)P_{\nu/\mu}(\phi';q,t),
\end{equation}
see e.g. \cite[(2.24)]{borodin2014macdonald}. Here by $P_{\la/\mu}(\phi,\phi';q,t)$ we simply mean $\phi(P_{\la/\mu}) + \phi'(P_{\la/\mu})$.

\subsection{Macdonald processes.} One obtains probability measures on sequences of partitions using \eqref{eq:specialized_cauchy} as follows.

\begin{defi}\label{def:mac_proc}
Let $\theta$ and $\phi_1,\ldots,\phi_k$ be Macdonald-nonnegative specializations such that each pair $\theta,\phi_i$ satisfies \eqref{eq:finiteness_cauchy}. The associated \emph{ascending Macdonald process} is the probability measure on sequences $\la^{(1)},\ldots,\la^{(k)}$ given by 
\[
\Pr(\la^{(1)},\ldots,\la^{(k)}) = \frac{Q_{\la^{(1)}}(\phi_1;q,t) Q_{\la^{(2)}/\la^{(1)}}(\phi_2;q,t) \cdots Q_{\la^{(k)}/\la^{(k-1)}}(\phi_k;q,t) P_{\la^{(k)}}(\theta;q,t)}{\prod_{i=1}^k \Pi_{q,t}(\phi_i;\theta)}.
\]
\end{defi}

The $k=1$ case of \Cref{def:mac_proc} is a measure on partitions, referred to as a \emph{Macdonald measure}. Instead of defining joint distributions all at once as above, one can define Markov transition kernels on $\cP$.

\begin{defi}\label{def:cauchy_dynamics}
Let $\theta,\phi$ be Macdonald nonnegative specializations satisfying \eqref{eq:finiteness_cauchy} and $\la$ be such that $P_\la(\theta) \neq 0$. The associated \emph{Cauchy Markov kernel} is defined by 
\begin{equation}\label{eq:def_HL_cauchy_dynamics}
    \Pr(\la \to \nu) = Q_{\nu/\la}(\phi) \frac{P_\nu(\theta)}{P_\la(\theta) \Pi(\phi; \theta)}.
\end{equation}
\end{defi}

It is clear that the ascending Macdonald process above is nothing more than the joint distribution of $k$ steps of a Cauchy Markov kernel with specializations $\phi_i,\theta$ at the $i\tth$ step, and initial condition $\emptyset$. In this work we will only refer to the $q=0$ case, where the Macdonald polynomials are Hall-Littlewood polynomials and the corresponding measure (resp. process) is called a Hall-Littlewood measure (resp. process). 

\section{Explicit formulas for the limit and prelimit}\label{sec:depoissonize}

In this section we collect the algebraic parts of the proof of \Cref{thm:jordan_limit_nometric}. We explicitly define the limiting random variables $\cL_{k,t,\chi}$ below, relate Jordan normal forms of upper-triangular matrices to the Hall-Littlewood process $\la(\tau)$ of \Cref{def:lambda_hl_planch} in \Cref{thm:cite_borodin}, and give the exact formula for the prelimit probabilities we consider in \Cref{thm:depoissonize_int_formula}.

\subsection{The limiting random variable.} Similarly to before, we define $\Sig_\infty := \{(\la_i)_{i \geq 1} \in \Z^\infty: \la_1 \geq \la_2 \geq \ldots\}$.

\begin{defi}\label{def:cL_series}
For any $t \in (0,1),\chi \in \R_{>0}$, we define the $\Sig_\infty$-valued random variable $\cL_{t,\chi} = (\cL_{t,\chi}^{(i)})_{i \geq 1}$ by specifying its finite-dimensional distributions $\cL_{k,t,\chi} := (\cL^{(i)})_{1 \leq i \leq k}$ through the formula
\begin{multline}\label{eq:limit_rv_res_formula_quoted}
\Pr(\cL_{k,t,\chi} = (L_1,\ldots,L_k)) = \frac{1}{(t;t)_\infty}\sum_{d \leq L_k} e^{-\chi t^d} \frac{t^{\sum_{i=1}^k \binom{L_i-d}{2}}}{ (t;t)_{L_k-d} \prod_{i=1}^{k-1}(t;t)_{L_i-L_{i+1}}}  \\ 
\times \sum_{\substack{\mu \in \Sig_{k-1} \\ \mu \prec \vec{L}}} (-1)^{|\vec{L}| - |\mu|-d}  \prod_{i=1}^{k-1} \sqbinom{L_i-L_{i+1}}{L_i-\mu_i}_t Q_{(\mu - (d[k-1]))'}(\gamma((1-t)t^d\chi),\alpha(1);0,t),
\end{multline}
valid for any $\left(L_{1}, \ldots, L_{k}\right) \in \operatorname{Sig}_{k}$.
\end{defi}

It is not obvious that the formulas define valid $\Sig_k$-valued random variables which are consistent for different $k$, but this is shown in \cite[Theorem 6.1]{van2023local}. The following alternative form will be useful for showing convergence to $\cL_{k,t,\chi}$:

\begin{prop}\label{thm:series_to_contour_cL}
The probability weights of $\cL$ have the contour integral expression
\begin{multline}
\label{eq:limit_rv_int_formula_quoted}
\Pr(\cL_{k,t,\chi} = (L_1,\ldots,L_k)) \\ 
=  \frac{(t;t)_\infty^{k-1}}{k! (2 \pi \bi)^k} \prod_{i=1}^{k-1} \frac{t^{\binom{L_i-L_k}{2}}}{(t;t)_{L_i-L_{i+1}}} \int_{\tG^k} e^{\chi t^{L_k}(w_1+\cdots+w_k)} \frac{\prod_{1 \leq i \neq j \leq k} (w_i/w_j;t)_\infty}{\prod_{i=1}^k (-w_i^{-1};t)_\infty (-tw_i;t)_{\infty}} \\ 
\times \sum_{j=0}^{L_{k-1}-L_k} t^{\binom{j+1}{2}} \sqbinom{L_{k-1}-L_k}{j}_t  P_{(L_1-L_k,\ldots,L_{k-1}-L_k,j)}(w_1^{-1},\ldots,w_k^{-1};t,0)  \prod_{i=1}^k \frac{dw_i}{w_i}
\end{multline}
for $k \geq 2$ and 
\begin{equation}
\label{eq:limit_rv_int_formula_k=1_quoted}
\Pr(\cL_{1,t,\chi} = (L)) = \frac{1}{2 \pi \bi} \int_{\tG} \frac{e^{\chi t^L w}}{(-w;t)_\infty} dw
\end{equation}
for $k=1$, with contour 
\begin{equation}
\tG := \{x + \bi: x \leq 0 \} \cup \{x - \bi:  x \leq 0\} \cup \{x+\bi y: x^2+y^2=1, x > 0\}
\end{equation}
in usual counterclockwise orientation, see \Cref{fig:tG}. 
\end{prop}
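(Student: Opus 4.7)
The plan is to evaluate the iterated contour integral in \eqref{eq:limit_rv_int_formula_quoted} via the residue theorem and identify the resulting sum of residues with the series \eqref{eq:limit_rv_res_formula_quoted}. In each variable $w_i$, the integrand has simple poles inside $\tG$ from the factors $1/(-w_i^{-1};t)_\infty$ (at $w_i = -t^m$, $m \geq 0$) and $1/(-tw_i;t)_\infty$ (at $w_i = -t^{-m-1}$, $m \geq 0$), all lying in the unbounded interior of the contour. Since the integrand is $S_k$-symmetric in $(w_1,\ldots,w_k)$—via the symmetry of the Hall-Littlewood polynomial $P_{(L_1-L_k,\ldots,L_{k-1}-L_k,j)}$, of the product $\prod_{i \neq j}(w_i/w_j;t)_\infty$, and of the exponential—the $1/k!$ prefactor allows restricting the residue sum to ordered tuples of pole locations.

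For each such ordered configuration, the residue computation has three ingredients. First, the elementary residues of $1/(-w^{-1};t)_\infty$ or $1/(-tw;t)_\infty$ at their simple poles are standard $q$-Pochhammer calculations producing factors of the form $(-1)^m t^{\binom{m}{2}}/((t;t)_m (t;t)_\infty)$. Second, the Hall-Littlewood polynomial $P_{(L_1-L_k,\ldots,L_{k-1}-L_k,j)}$ evaluated at the pole locations expands via the combinatorial branching formula \eqref{eq:skewP_branch_formula_sig} combined with the Hall-Littlewood branching coefficients \eqref{eq:hl_sig_pbranch} into a sum over interlacing chains, producing the $q$-binomial factors $\sqbinom{L_i-L_{i+1}}{L_i-\mu_i}_t$ indexed by a signature $\mu \prec \vec L$. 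Third, the exponential $\exp(\chi t^{L_k}(w_1+\cdots+w_k))$ evaluated at the poles produces $\exp(-\chi t^d)$ (modulated by further multi-variable contributions) after reindexing by a shift parameter $d$.

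Once collected, the sum of residues must be reorganized to match \eqref{eq:limit_rv_res_formula_quoted}: the shift parameter is identified with $d \leq L_k$ (the smallest pole index controlling the dominant exponential), the signature $\mu \in \Sig_{k-1}$ with $\mu \prec \vec L$ parametrizes the remaining pole positions relative to $\vec L$, and the residual Cauchy-type factors $(w_i/w_j;t)_\infty$ plus the $\sum_j$ in the integrand combine with the symmetrization over orderings to assemble the specialized $Q$-polynomial $Q_{(\mu-d[k-1])'}(\gamma((1-t)t^d\chi),\alpha(1);0,t)$. This assembly should follow from the skew branching rule \eqref{eq:specialization_branch}, the action of $\alpha$ and $\gamma$ specializations on Macdonald polynomials in \Cref{thm:specialize_mac_poly}, and the Plancherel specialization formula \eqref{eq:planch_spec}.

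The hard part will be precisely this identification of the residue sum with the specialized $Q$-polynomial: it requires matching a product of residue factors against the combinatorial expansion of $Q_{(\mu-d[k-1])'}$ at the joint $(\gamma,\alpha)$ specialization, with the conjugate partition adding additional subtlety. Secondary concerns include convergence of the infinite residue sum and justification of the contour deformations, which should follow from the rapid decay of $\exp(\chi t^{L_k} w)$ as $\Re(w) \to -\infty$ along the horizontal tails of $\tG$. The $k=1$ case, where $\mu \in \Sig_0$ is trivial and $Q_\emptyset = 1$, simplifies to a single-variable $q$-Pochhammer identity and serves as a sanity check for signs and constants in the residue calculation.
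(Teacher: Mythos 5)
The paper's proof of \Cref{thm:series_to_contour_cL} simply defers to \cite[Theorem 6.1]{van2023local}, so there is no proof in the present paper to compare against directly. Your plan to collect residues and match against the series of \Cref{def:cL_series} is the natural direction, but the sketch as written has a serious gap that would derail it if carried out.

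The core problem is that the Cauchy-type numerator $\prod_{i\neq j}(w_i/w_j;t)_\infty$ vanishes whenever all $w_i$ lie in $-t^{\Z}$: for any pair $i\neq j$ with $w_i=-t^{a_i}$, $w_j=-t^{a_j}$, one of $(t^{a_i-a_j};t)_\infty$, $(t^{a_j-a_i};t)_\infty$ has a factor $1-t^0=0$. More precisely, after taking the residue in $w_1$ at $w_1=-t^{a_1}$, the factor $(-t^{a_1}/w_2;t)_\infty(-w_2 t^{-a_1};t)_\infty$ in the numerator has a simple zero at every $w_2=-t^{b}$ with $b\neq a_1$ (and a double zero at $b=a_1$), cancelling the simple pole of $(-w_2^{-1};t)_\infty(-tw_2;t)_\infty$ there. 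So the residue in $w_1$ leaves an integrand with \emph{no} poles along $-t^{\Z}$, and a naive iteration — the plan's ``sum over ordered tuples of pole locations'' — would produce zero. The nonzero answer comes from a different place: for $a_1\leq 0$, the ratio
\[
\frac{(-t^{a_1}w_2^{-1};t)_\infty}{(-w_2^{-1};t)_\infty}
\]
grows like $w_2^{a_1}$ as $w_2\to 0$, so the residue-in-$w_1$ integrand acquires a pole at the origin of order $1-a_1$ (combined with $dw_2/w_2$), and it is this origin contribution that survives. For $a_1\geq 1$ the contribution genuinely vanishes. Your plan does not identify the origin as a pole at all — and indeed the original integrand has no singularity there, since $1/(-w^{-1};t)_\infty$ vanishes superpolynomially near $0$ — so this is precisely the kind of subtlety that only appears mid-computation and would not be anticipated from the sketch. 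This is also why the analogous residue expansion in the paper's Lemma~\ref{thm:compute_a_residue} leaves a $(k-1)$-dimensional integral over a torus $\T^{k-1}$ rather than reducing directly to a sum: the remaining torus integral is extracting the Laurent coefficient at the origin (equivalently, a Hall--Littlewood torus scalar product as in \Cref{thm:laurent_orthogonality}), not collecting more $-t^{\Z}$ residues.

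Two further points. First, the plan says the exponential ``evaluated at the poles produces $\exp(-\chi t^d)$ (modulated by further multi-variable contributions)''; but $e^{\chi t^{L_k}(w_1+\cdots+w_k)}$ at a $k$-tuple $w_i=-t^{a_i}$ gives $e^{-\chi t^{L_k}(t^{a_1}+\cdots+t^{a_k})}$, which is not $e^{-\chi t^d}$ for any single $d$. The single-$d$ structure of \eqref{eq:limit_rv_res_formula_quoted} emerges because in the correct reorganization only one variable sits at a ``macroscopic'' pole $-t^{a}$ with $a\leq 0$, while the remaining variables contribute through the torus/origin term, whose Taylor expansion in the exponentials feeds the Plancherel parameter $\gamma((1-t)t^d\chi)$ of the $Q$-polynomial. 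Recognizing this mechanism is essential to assembling $Q_{(\mu-(d[k-1]))'}(\gamma(\cdots),\alpha(1);0,t)$; the branching rule and specialization facts you cite are necessary ingredients but are not by themselves a path to that formula. Second, a small but consequential slip: $P_{(L_1-L_k,\ldots,L_{k-1}-L_k,j)}(w^{-1};t,0)$ is a $q$-Whittaker polynomial (Macdonald with second parameter $0$), not a Hall--Littlewood polynomial; the two have completely different branching coefficients (compare the two halves of \Cref{thm:hl_qw_branch_formulas}), and using the wrong specialization would lead the combinatorial bookkeeping astray.
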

\begin{proof}
Shown in \cite[Theorem 6.1]{van2023local}.
\end{proof}

\begin{rmk}\label{rmk:k=1_convention}
The $k=1$ formula \eqref{eq:limit_rv_int_formula_k=1_quoted} above appears different from \eqref{eq:limit_rv_int_formula_quoted}, but may be obtained by formally setting $k=1$ and $L_{k-1}-L_k=\infty$ and 
\begin{equation}
\sqbinom{\infty}{j}_t := \frac{1}{(t;t)_j}
\end{equation}
in the general form and applying the $q$-binomial formula.
\end{rmk}

\begin{figure}[htbp]
\begin{center}
\begin{tikzpicture}[scale=1.5]
  \draw[<->] (0,-2) -- (0,2) node[above] {$\Im(w_i)$};
  \draw[<->] (-6,0) -- (2,0) node[above] {$\Re(w_i)$};

  \draw[thick] (0,-1) arc (-90:90:1);

  \draw[thick] (-6,1) node[left] {$\cdots$} -- (0,1); 
  \draw[thick] (-6,-1) node[left] {$\cdots$} -- (0,-1); 

\end{tikzpicture}
\caption{The contour $\tG$ in $\C$.
}\label{fig:tG}
\end{center}
\end{figure}

\subsection{Exact formula for the prelimit probability.} Our analysis proceeds using a specific Hall-Littlewood process related to Jordan forms of triangular matrices, which we now give a name.

\begin{defi}\label{def:lambda_hl_planch}
We denote by $\la(\tau), \tau \in \R_{\geq 0}$ the stochastic process on $\cP$ in continuous time $\tau$ with finite-dimensional marginals given by the Hall-Littlewood process
\begin{equation}\label{eq:hlproc_planch_def}
\Pr(\la(\tau_i) = \la(i) \text{ for all }i=1,\ldots,k) =  \frac{\left(\prod_{j=1}^k Q_{\la(j)/\la(j-1)}(\gamma(\tau_j-\tau_{j-1});0,t) \right) P_{\la(k)}(1,t,\ldots;0,t)}{\exp\left(\frac{\tau_k}{1-t}\right)}
\end{equation}
for each sequence of times $0 \leq \tau_1 \leq \tau_2 \leq \cdots \leq \tau_k$ and $\la(1),\ldots,\la(k)\in \cP$, where in the product we take the convention $\tau_0=0$ and $\la(0) = \emptyset$ is the zero partition. 
\end{defi}

\begin{rmk}\label{rmk:cplx_tau}
For a fixed $\tau \in \C$, we write expectations and probabilities of $\la(\tau)$ to mean with respect to the complex-valued Hall-Littlewood measure defined using the complex Plancherel specialization of \Cref{def:plancherel}, e.g. we use
\begin{equation}
\Pr((\lambda_1'(\tau),\ldots,\lambda_k'(\tau)) = \eta) := \frac{1}{\Pi_{0,t}(1,t,\ldots;\gamma(\tau))}\sum_{\substack{\la \in \Y: \\ (\la_1',\ldots,\la_k') = \eta}} P_\la(1,t,\ldots;0,t)Q_\la(\gamma(\tau);0,t),
\end{equation}
which will in general not be a nonnegative real number, in \Cref{thm:use_torus_product_quoted}.
\end{rmk} 

We then define the stopping times needed to turn it into a discrete-time process.

\begin{defi}\label{def:jump_times}
Define stopping times 
\begin{equation}
\sigma_n := \inf \{\tau \in \R_{\geq 0} : |\la(\tau)| = n\}
\end{equation}
for $n \in \Z_{\geq 0}$.
\end{defi}

Now $\la(\sigma_n), n=0,1,2,\ldots$ is a stochastic process on $\Y$ in discrete time $n$. To compare with random matrices, we require explicit formulas for its transition probabilities.

\begin{prop}\label{thm:explicit_transition_probs}
For any $\mu,\nu \in \Y$,
\begin{equation}\label{eq:disc_la_transitions}
\Pr(\la(\sigma_{n+1}) = \nu | \la(\sigma_n) = \mu) = \begin{cases}
t^{\ell-1}(1-t^{m_{\mu_\ell}(\mu)}) & \exists \ell \geq 1 \text{ such that }\nu_i = \mu_i + \bbone[i=\ell] \text{ for all }i \geq 1 \\ 
0 & \text{else}
\end{cases}
\end{equation}
\end{prop}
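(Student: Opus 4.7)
The plan is to view $\la(\sigma_n)$ as the embedded discrete jump chain of the continuous-time Markov process $\la(\tau)$, so its one-step transitions equal the infinitesimal jump rates $R(\mu,\nu)$ normalized by the total exit rate $\sum_{\nu'} R(\mu,\nu')$. By \Cref{def:lambda_hl_planch} and \Cref{def:cauchy_dynamics}, $\la(\tau)$ is driven by the Cauchy Markov kernel with state specialization $\theta = (1,t,t^2,\ldots)$ and infinitesimal input $\phi = \gamma(d\tau)$. Writing $\gamma(\tau) = \lim_{D\to\infty}\alpha\bigl(\tfrac{\tau}{(1-t)D}[D]\bigr)$ and applying the branching formula \eqref{eq:skewQ_branch_formula}, only single-box additions $\nu_i = \mu_i + \bbone[i=\ell]$ contribute linearly in $d\tau$, with $Q_{\nu/\mu}(\gamma(d\tau);0,t) = \tfrac{d\tau}{1-t}\varphi_{\nu/\mu}(0,t) + O(d\tau^2)$. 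Combined with $\Pi_{0,t}(\gamma(d\tau);1,t,\ldots) = \exp(d\tau/(1-t))$, this yields
\begin{equation*}
R(\mu,\nu) \;=\; \frac{1}{1-t}\,\varphi_{\nu/\mu}(0,t)\,\frac{P_\nu(1,t,t^2,\ldots;0,t)}{P_\mu(1,t,t^2,\ldots;0,t)}.
\end{equation*}

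Next, I would substitute the explicit formulas. From \Cref{thm:hl_qw_branch_formulas}, the only $i>0$ with $m_i(\nu) = m_i(\mu)+1$ is $i = \nu_\ell$, so $\varphi_{\nu/\mu}(0,t) = 1 - t^{m_{\nu_\ell}(\nu)}$. The principal specialization of Hall-Littlewood polynomials obeys the standard identity
\begin{equation*}
P_\la(1,t,t^2,\ldots;0,t) \;=\; \frac{t^{n(\la)}}{\prod_{i\geq 1}(t;t)_{m_i(\la)}}, \qquad n(\la) := \sum_i (i-1)\la_i,
\end{equation*}
which can be derived quickly from the one-variable Cauchy identity $\sum_{k\geq 0} P_{(k)}(1,t,\ldots;0,t) Q_{(k)}(y;0,t) = \prod_{i\geq 0}(1-t^{i+1}y)/(1-t^i y) = 1/(1-y)$, iterated to more variables (see also \cite[Ch.~III]{mac}). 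Using $n(\nu)-n(\mu) = \ell-1$, $m_{\mu_\ell}(\nu) = m_{\mu_\ell}(\mu)-1$, and $m_{\nu_\ell}(\nu) = m_{\nu_\ell}(\mu)+1$, the ratio becomes
\begin{equation*}
\frac{P_\nu(1,t,\ldots;0,t)}{P_\mu(1,t,\ldots;0,t)} \;=\; \frac{t^{\ell-1}(1-t^{m_{\mu_\ell}(\mu)})}{1-t^{m_{\nu_\ell}(\nu)}},
\end{equation*}
whose denominator cancels exactly against $\varphi_{\nu/\mu}(0,t)$, leaving $R(\mu,\nu) = t^{\ell-1}(1-t^{m_{\mu_\ell}(\mu)})/(1-t)$.

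Finally, I would verify that the total exit rate $\sum_{\nu} R(\mu,\nu)$ equals the $\mu$-independent constant $1/(1-t)$, so that dividing yields the claimed transition probability. Grouping the admissible $\ell$'s by the distinct values $v_1 > \ldots > v_r > 0$ of $\mu$ with multiplicities $m_{v_j}$, the ``interior'' contributions telescope:
\begin{equation*}
\sum_{j=1}^{r} t^{m_{v_1}+\cdots+m_{v_{j-1}}}(1-t^{m_{v_j}}) \;=\; 1 - t^{\len(\mu)},
\end{equation*}
and the ``new-row'' case $\ell = \len(\mu)+1$ adds an extra $t^{\len(\mu)}$ (either handled by a direct computation of $\varphi$ and $P_\nu/P_\mu$ in this boundary setting, or encoded by the convention $m_0(\mu) := \infty$ so that $1 - t^{m_0(\mu)} = 1$). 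The sum is therefore $1$, and $\sum_\nu R(\mu,\nu) = 1/(1-t)$. Dividing $R(\mu,\nu)$ by this gives $t^{\ell-1}(1-t^{m_{\mu_\ell}(\mu)})$, as claimed.

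The substantive content of the argument is twofold: the cancellation between the $\varphi_{\nu/\mu}$ factor from the Plancherel limit of $Q_{\nu/\mu}$ and the $(1-t^{m_{\nu_\ell}(\nu)})^{-1}$ factor from the principal-specialization ratio, and the telescoping which produces a $\mu$-independent total exit rate (equivalently, a stationary waiting-time distribution). Neither is a real obstacle once the explicit formulas for $\varphi_{\nu/\mu}$ and $P_\la(1,t,t^2,\ldots;0,t)$ are combined; the only mild subtlety lies in correctly bookkeeping the new-row case.
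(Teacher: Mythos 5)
Your proof is correct, but it takes a genuinely different route from the paper. The paper proves this proposition in one line by quoting the Markov generator $B(\mu,\nu)$ of the continuous-time process $\la(\tau)$ directly from \cite{vanpeski2021halllittlewood}, and then using $\Pr(\la(\sigma_{n+1}) = \nu \mid \la(\sigma_n) = \mu) = B(\mu,\nu)/(-B(\mu,\mu))$. You instead \emph{rederive} that generator from first principles: you extract the jump rate $R(\mu,\nu) = \frac{1}{1-t}\varphi_{\nu/\mu}(0,t)\,P_\nu(1,t,\ldots;0,t)/P_\mu(1,t,\ldots;0,t)$ by taking the Plancherel specialization $\gamma(d\tau)$ of the Cauchy Markov kernel and isolating the linear-in-$d\tau$ piece via the branching formula; you then substitute the explicit principal-specialization formula $P_\la(1,t,t^2,\ldots;0,t) = t^{n(\la)}/\prod_i(t;t)_{m_i(\la)}$ and the Hall--Littlewood $\varphi$-formula, observe the cancellation of the $(1-t^{m_{\nu_\ell}(\nu)})$ factors, and verify that the total exit rate telescopes to the constant $1/(1-t)$. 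All the individual computations check out, including the new-row boundary case (governed by the convention $m_0(\mu) = \infty$ so $1 - t^{m_0(\mu)} = 1$). The trade-off: the paper's argument is shorter but rests on an external reference, while yours is longer but self-contained and makes transparent \emph{why} the exit rate is $\mu$-independent (the telescoping sum); the one minor thing your write-up glosses over is the justification that the generator of the Plancherel-time Cauchy dynamics is well-defined and given by this first-order expansion, which the paper avoids entirely by citing it.
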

\begin{proof}
By \cite{vanpeski2021halllittlewood}\footnote{Specifically, (3.3), (3.4), and (3.6) in the proof of Proposition 3.4, where one must take the parameter $n$ in Proposition 3.4 to be $\infty$.}, $\la(\tau)$ has Markov generator 
\begin{align}
\begin{split}\label{eq:quote_generator}
B(\mu,\nu) &:= \dde{}{\tau}{\tau=0} \Pr(\lambda(T+\tau)=\nu | \lambda(T) = \mu) \\ &= \begin{cases}
-\frac{1}{1-t} & \mu = \nu \\  
\frac{t^{\ell-1}(1-t^{m_{\mu_\ell}(\mu)})}{1-t} & \exists \ell \geq 1 \text{ such that }\nu_i = \mu_i + \bbone[i=\ell] \text{ for all }i \geq 1 \\ 
0 & \text{else}
\end{cases}.
\end{split}
\end{align}
Since
\begin{equation}
\Pr(\la(\sigma_{n+1}) = \nu | \la(\sigma_n) = \mu) = \frac{B(\mu,\nu)}{-B(\mu,\mu)},
\end{equation}
the result follows by \eqref{eq:quote_generator}.
\end{proof}

In the random matrix context, we compare with the following result. Recall that $J(A)$ denotes the partition of sizes of nilpotent Jordan blocks of $A$, arranged in decreasing order.

\begin{prop}[{\cite[Theorem 2.3]{borodin1999lln}}]\label{thm:borodin_division}
Let $(a_{i,j})_{1 \leq i,j \leq n+1}$ be uniformly random strictly upper-triangular over $\F_q$. Then the transition probabilities
\begin{equation}
\Pr(J((a_{i,j})_{1 \leq i,j \leq n+1}) = \nu | (a_{i,j})_{1 \leq i,j \leq n})
\end{equation}
depend only on $J((a_{i,j})_{1 \leq i,j \leq n})$, and 
\begin{multline}
\Pr(J((a_{i,j})_{1 \leq i,j \leq n+1}) = \nu | (a_{i,j})_{1 \leq i,j \leq n} = \mu) \\ 
 = \begin{cases}
q^{-\mu_j'}(1-q^{\mu_j'-\mu_{j-1}'}) & \exists j \geq 1 \text{ such that }\nu_i' = \mu_i' + \bbone[i=j] \text{ for all }i \geq 1 \\ 
0 & \text{else}
\end{cases}
\end{multline}
with the convention $\mu_0'=\infty$ and $q^{-\infty}=0$ when $j=1$.
\end{prop}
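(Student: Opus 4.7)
The plan is to condition on the top-left block $A_n = (a_{i,j})_{1 \leq i,j \leq n}$ and exploit the block structure of $A_{n+1}$. By strict upper-triangularity,
\[
A_{n+1} = \begin{pmatrix} A_n & v \\ 0 & 0 \end{pmatrix},
\]
where $v \in \F_q^n$ is the new column (the diagonal entry and new bottom row are forced to zero). Conditioned on $A_n$, the vector $v$ is uniform on $\F_q^n$ and independent of $A_n$, so I would compute the conditional law of $J(A_{n+1})$ by analyzing how the kernel dimensions $\dim \ker A_{n+1}^i$ depend on the random $v$, and then check that the answer depends on $A_n$ only through $J(A_n)$.

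For the dimension analysis, the starting observation is $A_{n+1}^i = \begin{pmatrix} A_n^i & A_n^{i-1} v \\ 0 & 0 \end{pmatrix}$. Splitting $(x,y) \in \ker A_{n+1}^i \subseteq \F_q^n \times \F_q$ according to whether $y = 0$, a direct count shows
\[
\dim \ker A_{n+1}^i - \dim \ker A_n^i = s_i := \bbone[A_n^{i-1}v \in \operatorname{Image}(A_n^i)].
\]
Unwinding the defining condition (solve $A_n^{i-1}v = A_n^i w$ for $w$) rewrites $s_i = \bbone[v \in \operatorname{Image}(A_n) + \ker A_n^{i-1}]$. These subspaces are nested in $i$, so $(s_i)_{i \geq 1}$ is monotone non-decreasing from $0$ to eventually $1$, jumping at a unique index $j \in \{1,2,\ldots\}$. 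Because $\nu_i' - \mu_i' = s_i - s_{i-1}$ with $s_0=0$, this forces $\nu'$ into the one-column-increment shape $\nu_i' = \mu_i' + \bbone[i=j]$ stated in the proposition, indexed by the jump location $j$.

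It then remains to count $v \in \F_q^n$ realizing each value of $j$ and to verify the answer depends only on $\mu = J(A_n)$. This reduces to computing $\dim(\operatorname{Image}(A_n) + \ker A_n^{i-1})$. The key linear-algebra identity to check is
\[
\operatorname{Image}(A_n) \cap \ker A_n^{i-1} = A_n(\ker A_n^i),
\]
whose dimension equals $\dim \ker A_n^i - \dim \ker A_n = (\mu_1' + \cdots + \mu_i') - \mu_1'$ by rank-nullity applied to $A_n|_{\ker A_n^i}$. Combining with $\dim \operatorname{Image}(A_n) = n - \mu_1'$ and $\dim \ker A_n^{i-1} = \mu_1' + \cdots + \mu_{i-1}'$ via $\dim(U+V) = \dim U + \dim V - \dim(U\cap V)$ gives $\dim(\operatorname{Image}(A_n) + \ker A_n^{i-1}) = n - \mu_i'$. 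Hence the number of $v$ in the event $\{s_j=1,\,s_{j-1}=0\}$ is $q^{n-\mu_j'} - q^{n-\mu_{j-1}'}$, with the convention $q^{-\mu_0'} = q^{-\infty} = 0$ handling $j=1$ cleanly; dividing by $q^n$ yields exactly the stated probability. Since every dimension appearing is determined by $\mu$, the first assertion that the transitions depend only on $J(A_n)$ falls out simultaneously.

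I do not foresee a real obstacle: the argument is elementary linear algebra throughout. The only spot deserving attention is the sum-intersection dimension count, where one must correctly identify $\operatorname{Image}(A_n) \cap \ker A_n^{i-1}$; once that intersection is recognized as $A_n(\ker A_n^i)$, both inclusions being immediate from definitions, the rest of the calculation is routine and the final formula assembles directly.
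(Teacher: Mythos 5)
Your argument is correct, and every step checks out: the block decomposition $A_{n+1} = \begin{pmatrix} A_n & v \\ 0 & 0\end{pmatrix}$ with $v$ uniform on $\F_q^n$; the identity $\dim \ker A_{n+1}^i = \dim \ker A_n^i + \bbone[v \in \operatorname{Im}(A_n) + \ker A_n^{i-1}]$; the nesting argument forcing a unique jump index $j$; and the dimension computation $\dim(\operatorname{Im}(A_n) + \ker A_n^{i-1}) = n - \mu_i'$ via $\operatorname{Im}(A_n) \cap \ker A_n^{i-1} = A_n(\ker A_n^i)$ and rank--nullity, which also yields the claim that the transition law depends only on $\mu$. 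The $j=1$ boundary case is handled cleanly by the convention $q^{-\mu_0'}=0$.

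Note, however, that the paper does not prove this proposition: it is quoted as \cite[Theorem 2.3]{borodin1999lln} (and the paper notes these transition probabilities go back to Kirillov \cite{kirillov1995variations}), so there is no in-paper proof to compare against. Your write-up is a self-contained elementary derivation of a result the paper uses as a black box, and the approach — conditioning on the top-left $n\times n$ block and tracking how kernel dimensions of powers change when a uniformly random column is appended — is the standard one for this statement and matches the spirit of the original references. You might tighten the exposition slightly by stating explicitly that $s_0 = 0$ by definition and that the jump index satisfies $j \leq \mu_1 + 1$ since $\ker A_n^{\mu_1} = \F_q^n$, but these are presentational points rather than gaps.
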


Finally, we connect random matrices over $\F_q$ to the Hall-Littlewood process framework.

\begin{cor}\label{thm:cite_borodin}
Let $q$ be a prime power, let $\la(\tau)$ be as in \Cref{def:lambda_hl_planch} with $t=1/q$, and let $\sigma_n, n \in \Z_{\geq 0}$ be as in \Cref{def:jump_times}. Let $a_{i,j}, j > i \geq 1$ be iid uniform elements of $\F_q$, and define $a_{i,j}=0$ for all $i \geq j$. Then for any $N \in \N$,
\begin{equation}\label{eq:dist_eq_jordan}
(\la(\sigma_n))_{0 \leq n \leq N} = (J((a_{i,j})_{1 \leq i,j \leq n}))_{0 \leq n \leq N}
\end{equation}
in distribution.
\end{cor}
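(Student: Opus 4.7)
My plan is to show that both sides of \eqref{eq:dist_eq_jordan} are $\cP$-valued Markov chains in discrete time $n$ with the same initial distribution (concentrated at $\emptyset$) and the same one-step transition kernel, after which equality in joint distribution follows automatically. For the right-hand side, the Markov property and an explicit transition kernel are exactly Borodin's \Cref{thm:borodin_division}. For the left-hand side, the Markov property of $\la(\sigma_n)$ will follow from the strong Markov property of the continuous-time process $\la(\tau)$ applied at the stopping times $\sigma_n$ (these are a.s.\ finite and increase to $\infty$ since the jump rates are bounded), combined with the fact that $\la(\sigma_{n+1})$ is obtained from $\la(\sigma_n)$ at the very first jump of $\la(\cdot)$ after $\sigma_n$. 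The discrete-time transition probability is then $B(\mu,\nu)/(-B(\mu,\mu))$ where $B$ is the generator, and this ratio is computed in \Cref{thm:explicit_transition_probs}.

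The remaining algebraic task is to match the two kernels after setting $t = 1/q$. The two formulas look different because \Cref{thm:explicit_transition_probs} is written in terms of row additions (indexed by $\ell$) while \Cref{thm:borodin_division} is written in terms of column additions (indexed by $j$); I need to translate between the two. If $\nu = \mu + e_\ell$ is a valid partition, then the unique column gaining a box is $j = \mu_\ell + 1$, so $\mu'_j = \#\{i : \mu_i \geq \mu_\ell + 1\} = \ell - 1$ (using $\mu_{\ell-1} > \mu_\ell$, forced by $\nu$ being a partition when $\ell > 1$), and $\mu'_{j-1} - \mu'_j = \#\{i : \mu_i = \mu_\ell\} = m_{\mu_\ell}(\mu)$. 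Plugging these into Borodin's expression gives
\begin{equation*}
q^{-\mu'_j}\bigl(1 - q^{\mu'_j - \mu'_{j-1}}\bigr) = q^{-(\ell-1)}\bigl(1 - q^{-m_{\mu_\ell}(\mu)}\bigr) = t^{\ell-1}\bigl(1 - t^{m_{\mu_\ell}(\mu)}\bigr),
\end{equation*}
which is precisely the kernel of \Cref{thm:explicit_transition_probs}. The boundary case $\mu = \emptyset$, $\ell = 1$, $j = 1$ is handled on the Jordan side by the convention $\mu'_0 = \infty$ and $q^{-\infty} = 0$, and on the Hall--Littlewood side by interpreting $m_0(\emptyset) = \infty$ so that $1 - t^{m_0(\emptyset)} = 1$; both sides correctly give transition probability $1$ into $(1)$.

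I do not anticipate any real obstacle here: the strong Markov reduction is standard, and the bookkeeping between row-additions and column-additions is a short combinatorial check. The mildest subtlety is simply bookkeeping around the $\ell = 1$ / $j = 1$ boundary, where the two papers use different conventions to encode the same fact. Once the two kernels are identified, induction on $n$ gives equality of the joint laws of $(\la(\sigma_n))_{0 \leq n \leq N}$ and $(J((a_{i,j})_{1 \leq i,j \leq n}))_{0 \leq n \leq N}$, completing the proof.
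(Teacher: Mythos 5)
Your proof is correct and follows the same route as the paper: identify both $(\la(\sigma_n))_n$ and $(J(\cdot))_n$ as Markov chains started from $\emptyset$ and match the one-step kernels of \Cref{thm:explicit_transition_probs} and \Cref{thm:borodin_division} under $t=1/q$. The paper's proof is a one-line version of your index translation, stating the relations $\ell = \mu_j'+1$ (equivalently your $j=\mu_\ell+1$) and $\mu_{\mu_\ell}'-\mu_{\mu_\ell+1}'=\mu_{j-1}'-\mu_j'$, without spelling out the substitution or the $\ell=1$, $j=1$ boundary convention as you do; your version also makes explicit the (standard but unstated in the paper) strong-Markov reduction justifying that $(\la(\sigma_n))_n$ is a discrete-time Markov chain with kernel $B(\mu,\nu)/(-B(\mu,\mu))$.
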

\begin{proof}
Both initial conditions are the empty partition, so we must only check that the transition probabilities in \Cref{thm:explicit_transition_probs} and \Cref{thm:borodin_division} are equal. For this simply note that $\ell = \mu_j+1$ and $\mu_{\mu_\ell}'-\mu_{\mu_\ell+1}' = \mu_{j-1}'-\mu_j'$ from our definitions of $\ell$ and $j$ in those results.
\end{proof}

To get formulas for the prelimit probability we are interested in, we first give one for the Poissonized process $\la(\tau)$.

\begin{prop}\label{thm:use_torus_product_quoted}
Let $\la(\tau)$ be as in \Cref{def:lambda_hl_planch}, $k \in \Z_{\geq 1}$, and $\eta \in \Y_k$. Then
\begin{align}\label{eq:use_torus_product_quoted}
\begin{split}
&\Pr((\lambda_1'(\tau),\ldots,\lambda_k'(\tau)) = \eta) = \frac{(t;t)_\infty^{k-1}}{k! (2 \pi \bi)^k  \prod_{i=1}^{k-1} (t;t)_{\eta_i - \eta_{i+1}}} \int_{c\T^k} e^{\frac{ \tau}{1-t}(z_1+\cdots+z_k)}t^{\sum_{i=1}^k \binom{\eta_i}{2}} \\ 
& \times \prod_{1 \leq i \neq j \leq k} (z_i/z_j;t)_\infty \sum_{j=0}^{\eta_{k-1}-\eta_k} t^{j(\eta_k+1)} t^{\binom{j}{2}} \sqbinom{\eta_{k-1}-\eta_k}{j}_t \frac{P_{\eta+j \vec{e_k}}(z_1^{-1},\ldots,z_k^{-1};t,0)}{\prod_{i=1}^k (-z_i^{-1};t)_\infty}  \prod_{i=1}^k \frac{dz_i}{z_i},
\end{split}
\end{align}
where $\T$ denotes the unit circle with counterclockwise orientation, $c \in \R_{>1}$ is arbitrary, and for general complex $\tau$ we interpret the left hand side via \Cref{rmk:cplx_tau}. If $k=1$, we interpret $\eta_{k-1} - \eta_k$ to be $\infty$ and 
\begin{equation}
\sqbinom{\infty}{j}_t := \frac{1}{(t;t)_j}
\end{equation}
in \eqref{eq:use_torus_product_quoted}, c.f. \Cref{thm:series_to_contour_cL} and \Cref{rmk:k=1_convention}.
\end{prop}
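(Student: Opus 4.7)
The plan is to compute $\Pr((\la_1'(\tau),\ldots,\la_k'(\tau))=\eta)$ directly from the Hall-Littlewood-Plancherel measure of \Cref{def:lambda_hl_planch} and recognize the resulting combinatorial sum as a $k$-dimensional torus integral using the orthogonality of Hall-Littlewood Laurent polynomials. First I would write
\begin{equation*}
\Pr((\la_1'(\tau),\ldots,\la_k'(\tau)) = \eta) = e^{-\tau/(1-t)} \sum_\la Q_\la(\gamma(\tau);0,t) \, P_\la(1,t,t^2,\ldots;0,t),
\end{equation*}
where the sum ranges over $\la \in \Y$ satisfying $\la_i' = \eta_i$ for $1 \leq i \leq k$. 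Such $\la$ are in bijection with partitions $\nu \in \Y_{\eta_k}$ via $\nu_i := \la_i - k$ for $1 \leq i \leq \eta_k$, with the remaining parts $\la_{\eta_k+1}, \la_{\eta_k+2}, \ldots$ forced by $\eta$ to consist of value $j$ with multiplicity $\eta_j - \eta_{j+1}$ for $j = 1, \ldots, k-1$.

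Applying the principal specialization formula $P_\la(1,t,t^2,\ldots;0,t) = t^{n(\la)}/\prod_{i\geq 1}(t;t)_{m_i(\la)}$ together with the identity $n(\la) = \sum_{i}\binom{\la_i'}{2} = \sum_{i=1}^k \binom{\eta_i}{2} + n(\nu)$ yields the factorization
\begin{equation*}
P_\la(1,t,t^2,\ldots;0,t) = \frac{t^{\sum_{i=1}^k \binom{\eta_i}{2}}}{\prod_{i=1}^{k-1}(t;t)_{\eta_i - \eta_{i+1}}} \cdot \frac{t^{n(\nu)}}{(t;t)_{\eta_k - \len(\nu)}\prod_{i \geq 1}(t;t)_{m_i(\nu)}}.
\end{equation*}
The first factor is exactly the $\eta$-only prefactor appearing on the right-hand side of the claim. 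The remaining $\nu$-dependent sum is recast as a contour integral by combining three ingredients: the specialized Cauchy identity $\sum_\la Q_\la(\gamma(\tau);0,t) P_\la(\bz;0,t) = \exp(\tau \sum z_i)$ (with appropriate rescaling to produce the $1/(1-t)$ in the exponent), the Hall-Littlewood weight $\prod_{i\neq j}(1-z_iz_j^{-1})/(1-tz_iz_j^{-1})$ from the torus product at $q=0$ (\Cref{def:torus_product} and \Cref{thm:laurent_orthogonality}), and a $q$-series expansion of the form $1/(x;t)_\infty = \sum_{m \geq 0} x^m/(t;t)_m$ to produce the factor $\prod_i 1/(-z_i^{-1};t)_\infty$ in the integrand. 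The $q$-Whittaker polynomial $P_{\eta+j\vec{e_k}}(z_1^{-1},\ldots,z_k^{-1};t,0)$ arises by applying the involution $\omega_t$ of \Cref{thm:specialize_mac_poly}, which relates $P_\la(\bz;0,t)$ to $Q_{\la'}(\bz;t,0)$ at swapped parameters, so that the condition on the first $k$ columns of $\la$ translates into an index condition on the $q$-Whittaker side.

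The sum over $j \in \{0,\ldots,\eta_{k-1}-\eta_k\}$ with weights $t^{j(\eta_k+1)}t^{\binom{j}{2}}\sqbinom{\eta_{k-1}-\eta_k}{j}_t$ accounts for the interplay between the forced multiplicity $\eta_{k-1}-\eta_k$ at value $k-1$ and the free index shift $\eta_k + j$ at the $k$-th coordinate, resolved by a $q$-Chu-Vandermonde summation. The main obstacle is the detailed bookkeeping to verify that all $t$-power prefactors, signs, and $(t;t)$-Pochhammer factors match after combining the principal-specialization formula, the branching formulas \Cref{thm:branching_formulas} and \Cref{thm:hl_qw_branch_formulas}, and the relevant $q$-series identities; in particular, one must check how the factor $(t;t)_{\eta_k - \len(\nu)}$ combines with the forced multiplicities to produce the $q$-binomial weights. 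Convergence of the integral on $c\T^k$ with $c > 1$ places the contour strictly outside the unit disk and so away from the singularities of $1/(-z_i^{-1};t)_\infty$ at $z_i = -t^{-m}$, $m \geq 0$. The $k=1$ case follows immediately from the conventions $\eta_0 - \eta_1 := \infty$ and $\sqbinom{\infty}{j}_t := 1/(t;t)_j$, under which the $q$-binomial theorem collapses the $j$-sum into the claimed one-variable integrand.
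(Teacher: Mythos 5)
The paper's own proof of this proposition is a one-line citation to \cite[Lemma 4.3]{van2023local} (the case $\nu=\emptyset$), so a from-scratch argument is necessarily a different approach, and there is no internal paper proof to compare against. Your opening moves are sound: the bijection between partitions $\lambda$ with $\lambda_i'=\eta_i$ for $i\le k$ and the free subpartition $\nu\in\Y_{\eta_k}$, the identity $n(\lambda)=\sum_i\binom{\lambda_i'}{2}$, and the resulting factorization of the principal specialization all check out and do reproduce the visible $t$-power and Pochhammer prefactors in \eqref{eq:use_torus_product_quoted}.

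The gap is in the passage to the contour integral, and it is not merely unfinished bookkeeping: one of the ingredients you name is the wrong one. You propose to use the Hall--Littlewood torus weight $\prod_{i\ne j}(1-z_iz_j^{-1})/(1-tz_iz_j^{-1})$ (i.e.\ the $q=0$ case of \Cref{def:torus_product}, via \Cref{thm:laurent_orthogonality}), but the integrand in \eqref{eq:use_torus_product_quoted} carries $\prod_{i\ne j}(z_i/z_j;t)_\infty$, which is the $q$-\emph{Whittaker} torus weight (parameters $(t,0)$); this is consistent with the polynomials $P_{\eta+j\vec e_k}(\cdot;t,0)$ being $q$-Whittaker rather than Hall--Littlewood. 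These two weights are genuinely different objects---one a finite rational function, the other an infinite product---so whatever orthogonality or extraction kernel the proof actually uses must live on the $q$-Whittaker side, not the Hall--Littlewood side. You do mention the involution converting $P_\la(\cdot;0,t)$ to $Q_{\la'}(\cdot;t,0)$, which is the right direction, but you never say how the torus scalar product transforms under it, and the flow of your argument (compute in the Hall--Littlewood picture, extract via the Hall--Littlewood pairing) does not obviously produce the $q$-Whittaker weight. A related loose end: the Cauchy identity you quote gives $\sum_\la Q_\la(\gamma(\tau);0,t)P_\la(\bz;0,t)=e^{\tau\sum z_i}$, not $e^{\frac{\tau}{1-t}\sum z_i}$; the ``appropriate rescaling'' you gesture at would also rescale the arguments of $(-z_i^{-1};t)_\infty$ and the contour $c\T$, so it needs to be reconciled rather than invoked. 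The $q$-Chu--Vandermonde manipulation producing the $j$-sum is precisely where these mismatches would have to resolve, and you acknowledge that step is undone. As written, the key mechanism linking the combinatorial sum to this specific integral kernel is not in place.
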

\begin{proof}
This is the special case of \cite[Lemma 4.3]{van2023local} where $\nu = \emptyset$.
\end{proof}

We will show the following variant, which was the goal of this subsection, by de-Poissonizing \Cref{thm:use_torus_product_quoted}:

\begin{lemma}\label{thm:depoissonize_int_formula}
Let $\la(\tau)$ be as in \Cref{def:lambda_hl_planch}, let $k, n \in \Z_{\geq 1}$, let $\sigma_n$ be as in \Cref{def:jump_times}, and let $\eta \in \Y_k$. Then
\begin{align}
\begin{split}\label{eq:depoissonize_int_formula}
&\Pr((\lambda_1'(\sigma_n),\ldots,\lambda_k'(\sigma_n)) = \eta) = \frac{(t;t)_\infty^{k-1}}{k! (2 \pi \bi)^k  \prod_{i=1}^{k-1} (t;t)_{\eta_i - \eta_{i+1}}} \int_{c\T^k} (1+z_1+\cdots+z_k)^n t^{\sum_{i=1}^k \binom{\eta_i}{2}} \\ 
&  \times \prod_{1 \leq i \neq j \leq k} (z_i/z_j;t)_\infty \sum_{j=0}^{\eta_{k-1}-\eta_k} t^{j(\eta_k+1)} t^{\binom{j}{2}} \sqbinom{\eta_{k-1}-\eta_k}{j}_t \frac{P_{\eta+j \vec{e_k}}(z_1^{-1},\ldots,z_k^{-1};t,0)}{\prod_{i=1}^k (-z_i^{-1};t)_\infty}  \prod_{i=1}^k \frac{dz_i}{z_i},
\end{split}
\end{align}
where $\T$ denotes the unit circle with counterclockwise orientation, and $c \in \R_{>1}$ is arbitrary. If $k=1$, we interpret $\eta_{k-1} - \eta_k$ to be $\infty$ and 
\begin{equation}
\sqbinom{\infty}{j}_t := \frac{1}{(t;t)_j}
\end{equation}
in \eqref{eq:depoissonize_int_formula}, as in \Cref{thm:use_torus_product_quoted}.
\end{lemma}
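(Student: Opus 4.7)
The plan is to de-Poissonize the integral formula of Proposition~\ref{thm:use_torus_product_quoted}. The key observation is that by the Markov generator computation in Proposition~\ref{thm:explicit_transition_probs}, the total outgoing rate $-B(\mu,\mu)$ from any state equals $1/(1-t)$ and each jump increments $|\lambda|$ by one; hence $|\lambda(\tau)|$ is a Poisson process of rate $1/(1-t)$ and the $\sigma_n$ are its successive jump times. In particular $\Pr(\sigma_m \leq \tau < \sigma_{m+1}) = e^{-\tau/(1-t)} \tau^m/((1-t)^m m!)$, and since $\lambda(\tau)=\lambda(\sigma_m)$ throughout $[\sigma_m,\sigma_{m+1})$, conditioning on the jump count yields the generating-function identity
$$e^{\tau/(1-t)} \Pr((\lambda_1'(\tau),\ldots,\lambda_k'(\tau)) = \eta) = \sum_{m \geq 0} \frac{\tau^m}{m!\,(1-t)^m} \Pr((\lambda_1'(\sigma_m),\ldots,\lambda_k'(\sigma_m)) = \eta).$$
Thus the desired prelimit probability is exactly the coefficient of $\tau^n/(n!(1-t)^n)$ in the entire function on the left.

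Next, I substitute the contour integral expression from Proposition~\ref{thm:use_torus_product_quoted} for the probability on the left. The only $\tau$-dependent factor in that integrand is $\exp(\tau(z_1+\cdots+z_k)/(1-t))$, so multiplying by the extra $e^{\tau/(1-t)}$ merges the two exponentials into $\exp(\tau(1+z_1+\cdots+z_k)/(1-t))$. Expanding this as a power series in $\tau$ and exchanging the sum with the contour integral produces an equality of power series in $\tau$; the interchange is justified because the contour $c\mathbb{T}^k$ is compact and the remaining $\tau$-independent factors of the integrand are continuous on it, so the Taylor series of the exponential converges uniformly on the contour.

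Matching the coefficient of $\tau^n/(n!(1-t)^n)$ on both sides replaces the $\tau$-dependent exponential inside the integral by $(1+z_1+\cdots+z_k)^n$, reproducing precisely the formula claimed in \eqref{eq:depoissonize_int_formula}. The $k=1$ convention for the $q$-binomial coefficient is inherited directly from Proposition~\ref{thm:use_torus_product_quoted}, so no separate argument is needed. There is no serious obstacle in this scheme: the only point requiring any care is the sum-integral interchange, which is routine given the compactness of the contour. If desired, one could alternatively bypass that step by extracting the Taylor coefficient via the Cauchy integral $\frac{1}{2\pi i}\oint \tilde I_\eta(\tau)\,\tau^{-n-1}d\tau$ over a small circle, pulling the $\tau$-contour inside the $z$-contour, and evaluating the inner $\tau$-integral explicitly.
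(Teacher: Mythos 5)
Your proof is correct, and it is at heart the same de-Poissonization the paper carries out: both use that the constant exit rate $-B(\mu,\mu)=1/(1-t)$ makes $|\lambda(\tau)|$ a Poisson process independent of the embedded chain (so $\lambda(\sigma_n)$ is independent of $\sigma_n$), and both pass through the contour integral of Proposition~\ref{thm:use_torus_product_quoted} and extract the coefficient of $\tau^n$. There is one genuine, if modest, difference in implementation. The paper isolates the $|\lambda|=n$ piece by writing $P_\lambda(u,ut,\ldots) = u^{|\lambda|}P_\lambda(1,t,\ldots)$ (resp.\ $Q_\lambda(\gamma(u\tau)) = u^{|\lambda|}Q_\lambda(\gamma(\tau))$), integrating against $u^{-n}\,du/u$ over $\mathbb{T}$, and then invoking Proposition~\ref{thm:use_torus_product_quoted} at the \emph{complex} Plancherel parameter $u\tau$; this is exactly the Cauchy-integral extraction you sketch as an aside, and it relies on the complex-parameter extension of Proposition~\ref{thm:use_torus_product_quoted} that the paper sets up in Remark~\ref{rmk:cplx_tau}. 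Your main argument instead writes the generating-function identity
\begin{equation*}
e^{\tau/(1-t)}\Pr\bigl((\lambda_1'(\tau),\ldots,\lambda_k'(\tau))=\eta\bigr) = \sum_{m \geq 0}\frac{\tau^m}{m!\,(1-t)^m}\,\Pr\bigl((\lambda_1'(\sigma_m),\ldots,\lambda_k'(\sigma_m))=\eta\bigr),
\end{equation*}
substitutes the real-$\tau$ integral formula, merges the exponentials into $\exp\bigl(\tau(1+\sum z_i)/(1-t)\bigr)$, Taylor-expands, interchanges by uniform convergence on the compact contour, and matches coefficients of $\tau^n/(n!(1-t)^n)$. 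This buys you a small simplification: you never need the complex Plancherel specialization of Definition~\ref{def:plancherel} or Remark~\ref{rmk:cplx_tau}, since both sides of your identity are entire functions determined by their Taylor coefficients at $\tau=0$, and the coefficient extraction is done as an algebraic power-series manipulation rather than a contour integral in an auxiliary variable.
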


\begin{proof}
First note that, because $\la(\sigma_n)$ is independent of $\sigma_n$,
\begin{align}\label{eq:condl_prob_sum}
\begin{split}
\text{LHS\eqref{eq:depoissonize_int_formula}} &= \Pr\left((\la_i'(\tau))_{i \leq i \leq k} = \eta \Big\vert |\la(\tau)|=n\right) \\ 
&= \frac{1}{\Pr(|\la(\tau)|=n)}\sum_{\substack{\la \in \Y: |\la|=n, \\ (\la_i')_{i \leq i \leq k} = \eta}} \Pr(\la(\tau) = \la).
\end{split}
\end{align}
We recall that for any fixed $\la \in \Y$, 
\begin{equation}\label{eq:cplx_tau}
\Pr(\la(\tau) = \la) = e^{-\frac{\tau}{1-t}} P_\la(1,t,\ldots;0,t)Q_\la(\gamma(\tau);0,t).
\end{equation}

The proportionality constant in \eqref{eq:condl_prob_sum} is 
\begin{align}\label{eq:add_integral_circle}
\begin{split}
\Pr(|\la(\tau)|=n) &= \sum_{\substack{\la \in \Y: |\la|=n}} e^{-\frac{\tau}{1-t}} P_\la(1,t,\ldots;0,t)Q_\la(\gamma(\tau);0,t) \\ 
&=  \sum_{\la \in \Y} \frac{1}{2 \pi \bi} \int_\T  e^{-\frac{\tau }{1-t}} P_\la(u,ut,\ldots;0,t)Q_\la(\gamma(\tau);0,t) u^{-n} \frac{du}{u}
\end{split}
\end{align}
where in the second line we use that $P_\la(u,ut,\ldots;0,t) = u^{|\la|}P_\la(1,t,\ldots;0,t)$. The norm of the integrand in \eqref{eq:add_integral_circle} is bounded above by
\begin{equation}
e^{-\frac{\tau}{1-t}} P_\la(1,t,\ldots;0,t)Q_\la(\gamma(\tau);0,t),
\end{equation}
which is summable by the Cauchy identity, so Fubini's theorem applies:
\begin{align}\label{eq:compute_norm_constant}
\begin{split}
\text{RHS\eqref{eq:add_integral_circle}} &= \frac{1}{2 \pi \bi} \int_\T \sum_{\la \in \Y} e^{-\frac{\tau}{1-t}} P_\la(u,ut,\ldots;0,t)Q_\la(\gamma(\tau);0,t) u^{-n} \frac{du}{u} \\ 
&= \frac{1}{2 \pi \bi} \int_\T e^{-\frac{\tau}{1-t}} e^{\frac{u\tau}{1-t}} u^{-n} \frac{du}{u} \\ 
&= \frac{\pfrac{\tau}{1-t}^n}{n! e^{\frac{\tau}{1-t}}}.
\end{split}
\end{align}

The other part of \eqref{eq:condl_prob_sum}, the sum, is
\begin{align}\label{eq:sum_condl_part}
\begin{split}
\sum_{\substack{\la \in \Y: |\la|=n, \\ (\la_i')_{1 \leq i \leq k} = \eta}}\Pr(\la(\tau) = \la) &=  \sum_{\substack{\la \in \Y:  \\ (\la_i')_{1 \leq i \leq k} = \eta}}\frac{e^{-\frac{\tau}{1-t}}}{ 2\pi \bi} \int_\T P_\la(1,t,\ldots;0,t)Q_\la(\gamma(u\tau);0,t) u^{-n} \frac{du}{u} \\ 
&= \frac{e^{-\frac{\tau}{1-t}}}{ 2\pi \bi} \int_\T e^{\frac{u\tau}{1-t}} \sum_{\substack{\la \in \Y: \\ (\la_i')_{1 \leq i \leq k} = \eta}} \frac{P_\la(1,t,\ldots;0,t)Q_\la(\gamma(u\tau);0,t)}{e^{\frac{u\tau}{1-t}}} u^{-n}\frac{du}{u},
\end{split}
\end{align}
where we justify the interchange of sum and integral as previously. For arbitrary $u \in \T$ and $\tau \in \R_{\geq 0}$, \Cref{thm:use_torus_product_quoted} yields
\begin{align}\label{eq:apply_cplx_use_torus}
\begin{split}
&\sum_{\substack{\la \in \Y: \\ (\la_i')_{1 \leq i \leq k} = \eta}} \frac{P_\la(1,t,\ldots;0,t)Q_\la(\gamma(u\tau);0,t)}{\Pi_{0,t}(1,t,\ldots;\gamma(u\tau))}=  \frac{(t;t)_\infty^{k-1}}{k! (2 \pi \bi)^k  \prod_{i=1}^{k-1} (t;t)_{\eta_i - \eta_{i+1}}} \int_{c\T^k} e^{\frac{u \tau}{1-t}(z_1+\cdots+z_k)} \\ 
&  t^{\sum_{i=1}^k \binom{\eta_i}{2}}\prod_{1 \leq i \neq j \leq k} (z_i/z_j;t)_\infty \sum_{j=0}^{\eta_{k-1}-\eta_k} t^{j(\eta_k+1)} t^{\binom{j}{2}} \sqbinom{\eta_{k-1}-\eta_k}{j}_t \frac{P_{\eta+j \vec{e_k}}(z_1^{-1},\ldots,z_k^{-1};t,0)}{\prod_{i=1}^k (-z_i^{-1};t)_\infty}  \prod_{i=1}^k \frac{dz_i}{z_i},
\end{split}
\end{align}
for any $c>1$; here we apply \Cref{thm:use_torus_product_quoted} with $u\tau$ for $\tau$. Substituting \eqref{eq:apply_cplx_use_torus} into \eqref{eq:sum_condl_part} and interchanging the integrals (which are over compact contours so this is immediate) yields
\begin{align}
\begin{split}\label{eq:remove_u_in_sum}
\text{RHS\eqref{eq:sum_condl_part}} &= \frac{(t;t)_\infty^{k-1}}{k! (2 \pi \bi)^k  \prod_{i=1}^{k-1} (t;t)_{\eta_i - \eta_{i+1}}} \int_{c\T^k} \left(\frac{e^{-\frac{\tau}{1-t}}}{ 2\pi \bi}\int_\T  e^{\frac{u\tau}{1-t}} e^{\frac{u \tau}{1-t}(z_1+\cdots+z_k)} u^{-n} \frac{du}{u}  \right)t^{\sum_{i=1}^k \binom{\eta_i}{2}}   \\ 
& \hspace{-10ex}\times \prod_{1 \leq i \neq j \leq k} (z_i/z_j;t)_\infty \sum_{j=0}^{\eta_{k-1}-\eta_k} t^{j(\eta_k+1)} t^{\binom{j}{2}} \sqbinom{\eta_{k-1}-\eta_k}{j}_t \frac{P_{\eta+j \vec{e_k}}(z_1^{-1},\ldots,z_k^{-1};t,0)}{\prod_{i=1}^k (-z_i^{-1};t)_\infty}  \prod_{i=1}^k \frac{dz_i}{z_i} \\ 
&= \frac{(t;t)_\infty^{k-1}}{k! (2 \pi \bi)^k  \prod_{i=1}^{k-1} (t;t)_{\eta_i - \eta_{i+1}}} \int_{c\T^k} \left( \frac{\left(\frac{\tau}{1-t}(1+z_1+\cdots+z_k)\right)^n}{n! e^{\frac{\tau}{1-t}}}\right)t^{\sum_{i=1}^k \binom{\eta_i}{2}} \\ 
&\hspace{-10ex}\times \prod_{1 \leq i \neq j \leq k} (z_i/z_j;t)_\infty \sum_{j=0}^{\eta_{k-1}-\eta_k} t^{j(\eta_k+1)} t^{\binom{j}{2}} \sqbinom{\eta_{k-1}-\eta_k}{j}_t \frac{P_{\eta+j \vec{e_k}}(z_1^{-1},\ldots,z_k^{-1};t,0)}{\prod_{i=1}^k (-z_i^{-1};t)_\infty}  \prod_{i=1}^k \frac{dz_i}{z_i}.
\end{split}
\end{align}
We now evaluate \eqref{eq:condl_prob_sum} by combining \eqref{eq:sum_condl_part}, \eqref{eq:remove_u_in_sum} (for the sum) with \eqref{eq:add_integral_circle}, \eqref{eq:compute_norm_constant} (for the normalizing constant), yielding
\begin{align}
\begin{split}
\text{RHS\eqref{eq:condl_prob_sum}} &= \frac{(t;t)_\infty^{k-1}}{k! (2 \pi \bi)^k  \prod_{i=1}^{k-1} (t;t)_{\eta_i - \eta_{i+1}}} \int_{c\T^k} (1+z_1+\cdots+z_k)^n t^{\sum_{i=1}^k \binom{\eta_i}{2}} \\ 
&\hspace{-10ex}\times \prod_{1 \leq i \neq j \leq k} (z_i/z_j;t)_\infty \sum_{j=0}^{\eta_{k-1}-\eta_k} t^{j(\eta_k+1)} t^{\binom{j}{2}} \sqbinom{\eta_{k-1}-\eta_k}{j}_t \frac{P_{\eta+j \vec{e_k}}(z_1^{-1},\ldots,z_k^{-1};t,0)}{\prod_{i=1}^k (-z_i^{-1};t)_\infty}  \prod_{i=1}^k \frac{dz_i}{z_i}
\end{split}
\end{align}
and completing the proof.
\end{proof}

\begin{rmk}\label{rmk:rederive_poissonized}
Though we do not know a proof of \Cref{thm:depoissonize_int_formula} without passing through \Cref{thm:use_torus_product_quoted}, if one assumes \Cref{thm:depoissonize_int_formula} it is easy to derive \Cref{thm:use_torus_product_quoted} by Poissonizing. Concretely, it follows by the explicit computation of jump rates for $\la(\tau)$ in \eqref{eq:quote_generator} that the size $|\la(\tau)|$ evolves according to a Poisson process in time $\tau$ with rate $1/(1-t)$ (this is also shown algebraically in \eqref{eq:compute_norm_constant} above, but we do not want to use that proof in our argument here because it assumes the result \Cref{thm:use_torus_product_quoted} which we are showing). Hence the two probabilities on the right hand side of 
\begin{align}
\begin{split}
\Pr((\la_1'(\tau),\ldots,\la_k'(\tau)) = \eta) &= \sum_{n \geq 0} \Pr((\la_1'(\tau),\ldots,\la_k'(\tau)) = \eta \Big\vert |\la(\tau)|=n) \Pr(\la(\tau)=n) \\ 
\end{split}
\end{align}
have explicit expressions, by \Cref{thm:depoissonize_int_formula} and the above discussion respectively. Using these and the fact that
\begin{align}
\begin{split}
\sum_{n \geq 0} e^{-\frac{\tau}{1-t}} \frac{\pfrac{\tau}{1-t}^n}{n!} (1+z_1+\cdots+z_k)^n &= e^{-\frac{\tau}{1-t} + \frac{\tau}{1-t}(1+z_1+\cdots+z_k)}= e^{\frac{\tau}{1-t}(z_1+\cdots+z_k)},
\end{split}
\end{align}
\Cref{thm:use_torus_product_quoted} follows.
\end{rmk}

\section{Asymptotic analysis} \label{sec:asymptotics}

In this section we prove \Cref{thm:jordan_limit_intro}. We will actually first prove the equivalent version stated as follows:

\begin{thm}\label{thm:jordan_limit_nometric}
Fix a prime power $q$ and for each $N \geq 1$, let $A_n$ be a uniformly random element of 
\begin{equation}
\mf{g}(n,q) = \{A = (a_{i,j})_{1 \leq i,j \leq n} \in \Mat_n(\F_q): a_{i,j} = 0 \text{ for all } i \geq j \}.
\end{equation}
Let $\zeta \in \R$ and $(n_\ell)_{\ell \geq 1}$ be a subsequence of $\N$ such that $\log_q n_\ell \to -\zeta$ in $\R/\Z$. Then for any $k \geq 1$,
\begin{equation}
(J(A_{n_\ell})'_i - [\log_q n_\ell + \zeta])_{1 \leq i \leq k} \to \cL_{k,q^{-1},q^{-\zeta}}
\end{equation}
in distribution as $\ell \to \infty$, where $\cL_{k,q^{-1},\chi}$ is as defined in \Cref{def:cL_series}, and $[\cdot]$ is the nearest integer function.
\end{thm}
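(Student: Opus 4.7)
The plan is to reduce the statement to a Hall--Littlewood computation and then perform asymptotic analysis of an explicit contour integral. By \Cref{thm:cite_borodin}, $J(A_{n})$ is distributed as $\lambda(\sigma_{n})$ for the Hall--Littlewood Plancherel process of \Cref{def:lambda_hl_planch} at $t = q^{-1}$, so it suffices to show
\begin{equation*}
\Pr\left((\lambda_{i}'(\sigma_{n_{\ell}}) - M_{\ell})_{1 \leq i \leq k} = (L_{1},\ldots,L_{k})\right) \longrightarrow \Pr\left(\mathcal{L}_{k, q^{-1}, q^{-\zeta}} = (L_{1},\ldots,L_{k})\right)
\end{equation*}
for each fixed $(L_{1},\ldots,L_{k}) \in \Sig_{k}$, where $M_{\ell} := [\log_{q} n_{\ell} + \zeta]$; since the values lie in the discrete set $\Sig_{k}$, pointwise convergence of probabilities gives the convergence in distribution. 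The prelimit is given explicitly by \Cref{thm:depoissonize_int_formula} with $\eta_{i} = L_{i} + M_{\ell}$, and the target by \Cref{thm:series_to_contour_cL}; the plan is to show that the integrands of the two formulas can be matched in the limit via an appropriate change of variables and contour deformation.

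The bridge between the two integral formulas is the change of variable $z_{i} = t^{\eta_{k}} w_{i}$. Under this substitution $\prod_i dz_{i}/z_{i}$ and the Cauchy factor $\prod_{i \neq j}(z_{i}/z_{j};t)_{\infty}$ are invariant, and $(1 + \sum_{i} z_{i})^{n_{\ell}} = (1 + t^{\eta_{k}} \sum_{i} w_{i})^{n_{\ell}}$ converges pointwise to $e^{\chi t^{L_{k}} \sum w_{i}}$ on compacts, since $n_{\ell} q^{-M_{\ell}} \to q^{-\zeta} = \chi$ by definition of $M_{\ell}$. The $P$-polynomial factor simplifies by Laurent homogeneity followed by two applications of the signature shift in \Cref{thm:signature_shift}: up to $t$-powers and a power of $w_{1} \cdots w_{k}$, $P_{\eta + j e_{k}}(z^{-1};t,0)$ becomes $P_{(L_{1}-L_{k},\ldots,L_{k-1}-L_{k},j)}(w^{-1};t,0)$. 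The subtle factor is $\prod_{i}(-z_{i}^{-1};t)_\infty^{-1}$, which I would rewrite using the identity
\begin{equation*}
(-t^{-\eta_{k}} w^{-1};t)_{\infty} = t^{-\binom{\eta_{k}+1}{2}} w^{-\eta_{k}} (-tw;t)_{\eta_{k}} (-w^{-1};t)_{\infty}
\end{equation*}
to extract the factor $(-w^{-1};t)_\infty^{-1}$ of the target integrand, noting that $(-tw;t)_{\eta_{k}}^{-1} \to (-tw;t)_\infty^{-1}$ as $\ell \to \infty$. A routine bookkeeping of $t$-exponents---using $\binom{L_{i}-L_{k}}{2} = \binom{L_{i}}{2} + \binom{L_{k}+1}{2} - L_{i}L_{k}$ and $\binom{L_{k}}{2}+\binom{L_{k}+1}{2} = L_{k}^{2}$---shows all $M_{\ell}$-dependence cancels, leaving exactly the factor $\prod_{i<k} t^{\binom{L_{i}-L_{k}}{2}} \cdot t^{\binom{j+1}{2}}$ appearing in \Cref{thm:series_to_contour_cL}, together with a cancellation of the $(w_{1}\cdots w_{k})^{\eta_{k}}$ and $(w_{1}\cdots w_{k})^{-\eta_{k}}$ prefactors.

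The main obstacle will be the contour deformation and the accompanying uniform estimates. After rescaling, the torus contour $c\T^{k}$ of \Cref{thm:depoissonize_int_formula} becomes the circle of huge radius $c q^{\eta_{k}}$, while the target contour $\widetilde{\Gamma}^{k}$ of \Cref{fig:tG} is a Hankel-type contour wrapping counterclockwise around the negative real axis through the right-half unit circle. Both contours enclose the same poles of the limit integrand on the negative real axis, but the finite truncation $(-tw;t)_{\eta_{k}}^{-1}$ has only $\eta_{k}$ such poles, with the largest in magnitude at $-q^{\eta_{k}}$ which slides off to $-\infty$ as $\ell \to \infty$. I plan to follow the strategy of \cite[Section 5]{van2023local}: (i) deform the large circle to $\widetilde{\Gamma}$, accounting for any intermediate residues and showing they vanish in the limit; (ii) establish uniform integrable bounds on the $w$-integrand along $\widetilde{\Gamma}$, using the decay of $\lvert e^{\chi t^{L_{k}}(w_{1}+\cdots+w_{k})} \rvert$ along the leftward horizontal tails to dominate the polynomial growth of the $P$-factor and the finite product $(-tw;t)_{\eta_{k}}^{-1}$; and (iii) apply dominated convergence. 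The extra poles flagged at the start of \Cref{sec:asymptotics}---arising because the finite truncation $(-tw;t)_{\eta_{k}}^{-1}$ and the polynomial $(1+t^{\eta_{k}}\sum w_{i})^{n_{\ell}}$ of the de-Poissonized integrand both have less tame behavior than their Poissonized counterparts in \cite{van2023local}---are precisely what step (ii) must handle, and I expect this to be the most technically demanding part of the proof.
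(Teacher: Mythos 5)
Your overall strategy matches the paper: reduce to the Hall--Littlewood process via \Cref{thm:cite_borodin}, use the de-Poissonized formula \Cref{thm:depoissonize_int_formula}, rescale $z_i = t^{\eta_k}w_i$, and match with \Cref{thm:series_to_contour_cL}. The algebraic manipulations you sketch (the identity $(-t^{-\eta_k}w^{-1};t)_\infty = t^{-\binom{\eta_k+1}{2}}w^{-\eta_k}(-tw;t)_{\eta_k}(-w^{-1};t)_\infty$, the signature-shift of the $P$-polynomial, the cancellation of $\eta_k$-dependent exponents) are correct and coincide with what the paper does.

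The gap is in your steps (i)--(iii). The prelimit integrand is not absolutely integrable on $\tG^k$ at fixed $\ell$, so you cannot deform the rescaled circle all the way to $\tG^k$ and invoke dominated convergence there. Along the horizontal tails of $\tG$, as $\Re(w_i) \to -\infty$ the binomial factor $(1 + t^{\eta_k(n_\ell)}\sum_i w_i)^{n_\ell}$ grows like $|w_i|^{n_\ell}$, while the Pochhammer $(-tw_i;t)_{\eta_k(n_\ell)}^{-1}$ only decays like $|w_i|^{-\eta_k(n_\ell)}$ (you describe this factor as exhibiting growth, but in fact it decays --- the growing factor is the binomial one). Since $\eta_k(n_\ell) \sim \log_{t^{-1}} n_\ell \ll n_\ell$, the integrand blows up on the left ends of the tails, and the exponential decay of $e^{\chi t^{L_k}\sum_i w_i}$ that you want to use to dominate belongs to the \emph{limit} integrand, not the prelimit. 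The paper sidesteps this by shrinking the original $z$-contour from $c\T$ to $t^{r+1/2}\T$ (with $r$ fixed so that $t^r < 1/k$) \emph{before} rescaling, picking up a fixed, $\ell$-independent set of $r+1$ residues per variable (\Cref{thm:compute_a_residue}), which are then shown to vanish (\Cref{thm:res_to_0}). After rescaling, the $w$-contour becomes the truncated Hankel contour $\Gamma(r,n)$ with left boundary at $\Re(w) = -t^{-\eta_k(n)+r+1/2}$; on $\Gamma(r,n)^k$ one has $\Re\bigl(t^{\eta_k}\sum_i w_i\bigr) > -kt^r > -1$, so the elementary bound $\lvert 1+x/n\rvert^n \leq e^{\Re(x)}$ (valid for $\Re(x) \geq -n$) applies, producing a bound on the binomial factor that is integrable over $\tG^k$ uniformly in $\ell$. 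Only then does the contour splitting into a main piece $\Gamma_1(n)$ and error pieces, together with the estimates of \cite[Lemmas 4.7--4.9]{van2023local}, close the argument. In short, the residue expansion is not a bookkeeping afterthought to ``account for''; it is the device that keeps the contour inside the region where the binomial factor is tame, and without it the domination you propose fails.
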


For this, our starting point is the integral formula \Cref{thm:depoissonize_int_formula} for the prelimit probabilities, which we wish to show converges to the corresponding integral formula in \Cref{thm:series_to_contour_cL} for the limit probabilities. For this asymptotic analysis as we carry it out, it is necessary to have a smaller contour than the one in \Cref{thm:depoissonize_int_formula}, which requires moving the contour through several poles. Hence we first compute the corresponding residues in \Cref{thm:compute_a_residue}, and then check they go to $0$ in the limit in \Cref{thm:res_to_0}, before we show the necessary asymptotics of the integral itself. This residue expansion is the only feature where the asymptotic analysis of the de-Poissonized integral in \Cref{thm:depoissonize_int_formula} differs from the analysis of the Poissonized version in \Cref{thm:use_torus_product_quoted}, which was carried out in the proof of \cite[Theorem 4.1]{van2023local}. In that work, it was not necessary residue-expand the integral at all and the appropriate asymptotics could be computed directly.


\begin{lemma}\label{thm:compute_a_residue}
For any $\respow \in \Z_{\geq 0}$ and $k \geq 2$, residue of the integral in \eqref{eq:depoissonize_int_formula} at the pole $z_k = -t^\respow$ is 
\begin{multline}
\label{eq:residue_in_lemma}
E(n,\eta,\respow) :=  \frac{(-1)^\respow (t;t)_\infty^{k-2} t^{\binom{\respow}{2}}t^{\sum_{i=1}^k \binom{\eta_i}{2}}}{k! (t;t)_\respow \prod_{i=1}^{k-1}  (t;t)_{\eta_i - \eta_{i+1}}}\sum_{\substack{\mu \in \Sig_{k-1} \\ \mu \prec \eta}} (-t^{-\respow})^{|\eta|-|\mu|} \prod_{i=1}^{k-1} \sqbinom{\eta_i-\eta_{i+1}}{\eta_i-\mu_i}_t  \\ 
 \times (t^{\eta_k-\respow+1};t)_{\mu_{k-1}-\eta_k}  \frac{1}{(2 \pi \bi)^{k-1}} \int_{\T^{k-1}} (1-t^\respow + z_1+\cdots+z_{k-1})^n  P_\mu(z_1^{-1},\ldots,z_{k-1}^{-1};t,0) \\ 
 \times  \prod_{1 \leq i \neq j \leq k-1} (z_i/z_j;t)_\infty \prod_{1 \leq i \leq k-1} (1+t^{-\respow}z_i) z_i^\respow t^{-\binom{\respow+1}{2}} (-tz_i;t)_\infty \frac{dz_i}{z_i}.
\end{multline}
If $k=1$, then abusing notation and treating $\eta \in \Y_1$ as an integer, the residue is 
\begin{equation}\label{eq:k=1_residue_in_lemma}
E(n,\eta,\respow) := (1-t^\respow)^n \frac{(-1)^{\eta+\respow} t^{\binom{\respow}{2}+\binom{\eta}{2}-\respow \eta}}{(t;t)_{\eta}} \sqbinom{\eta}{\respow}_t.
\end{equation}

\end{lemma}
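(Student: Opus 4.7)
My plan is to execute the residue calculation in two stages: isolate the singular factor at $z_k = -t^{\respow}$ and compute its residue explicitly, then simplify the evaluations of the holomorphic factors at $z_k = -t^{\respow}$. The only factor in the integrand of \eqref{eq:depoissonize_int_formula} with a pole at $z_k = -t^{\respow}$ is $1/(-z_k^{-1};t)_\infty = 1/\prod_{\ell \geq 0}(1+t^\ell z_k^{-1})$, where the $\ell = \respow$ factor vanishes; the polynomial $(1+z_1+\cdots+z_k)^n$, the cross Pochhammers $(z_i/z_k;t)_\infty(z_k/z_i;t)_\infty$, the factors $1/(-z_i^{-1};t)_\infty$ for $i<k$, and the Hall-Littlewood polynomial $P_{\eta+j\vec{e_k}}$ are all regular there. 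Using $\operatorname{Res}_{z=a}(1/h(z)) = 1/h'(a)$ with $h(z_k) = (z_k + t^\respow)\prod_{\ell \neq \respow, \ell \geq 0}(1+t^\ell z_k^{-1})$, one evaluates the finite part of the product at $z_k = -t^\respow$ via $\prod_{m=1}^{\respow}(1-t^{-m}) = (-1)^\respow t^{-\binom{\respow+1}{2}}(t;t)_\respow$ and the infinite tail as $(t;t)_\infty$, producing an explicit residue for $1/[z_k(-z_k^{-1};t)_\infty]$.

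For the regular factors, the key simplification is that the combination $(z_i/z_k;t)_\infty (z_k/z_i;t)_\infty / (-z_i^{-1};t)_\infty$ evaluated at $z_k = -t^{\respow}$ telescopes. Splitting $(-z_i t^{-\respow};t)_\infty = \prod_{m=1}^\respow(1+z_i t^{-m}) \cdot (-z_i;t)_\infty$ and $(-t^\respow/z_i;t)_\infty = (-z_i^{-1};t)_\infty/\prod_{\ell=0}^{\respow-1}(1+t^\ell/z_i)$, clearing denominators via $(1+z_i t^{-m})=(z_i+t^m)/t^m$, using the elementary identity $\prod_{m=1}^\respow(z_i+t^m)/\prod_{\ell=0}^{\respow-1}(z_i+t^\ell) = (z_i+t^\respow)/(z_i+1)$, and cancelling the $(1+z_i)$ against $(-z_i;t)_\infty = (1+z_i)(-tz_i;t)_\infty$, yields for each $i < k$ a factor of the form $z_i^\respow(1+t^{-\respow}z_i)(-tz_i;t)_\infty$ together with an explicit power of $t$.

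The Hall-Littlewood polynomial $P_{\eta+j\vec{e_k}}(z_1^{-1},\ldots,z_{k-1}^{-1},-t^{-\respow};t,0)$ is then expanded using the branching rule of \Cref{thm:branching_formulas} in the last variable:
\[
P_{\eta+j\vec{e_k}}(z_1^{-1},\ldots,z_{k-1}^{-1},u;t,0) = \sum_{\mu \prec \eta+j\vec{e_k}} \psi_{(\eta+j\vec{e_k})/\mu}(t,0)\,u^{|\eta|+j-|\mu|}\,P_\mu(z_1^{-1},\ldots,z_{k-1}^{-1};t,0),
\]
with $\psi$ given by the $q$-Whittaker formula \eqref{eq:qw_branch} from \Cref{thm:hl_qw_branch_formulas}, yielding $\prod_{i=1}^{k-2}\sqbinom{\eta_i-\eta_{i+1}}{\eta_i-\mu_i}_t \cdot \sqbinom{\eta_{k-1}-\eta_k-j}{\eta_{k-1}-\mu_{k-1}}_t$. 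Swapping the sums over $\mu$ and $j$ and applying $\sqbinom{n}{j}_t\sqbinom{n-j}{m}_t = \sqbinom{n}{m}_t\sqbinom{n-m}{j}_t$ with $n = \eta_{k-1}-\eta_k$, $m = \eta_{k-1}-\mu_{k-1}$, the inner $j$-sum collapses via the $q$-binomial theorem $\sum_{j=0}^N t^{\binom{j}{2}}\sqbinom{N}{j}_t x^j = (-x;t)_N$ applied with $N = \mu_{k-1}-\eta_k$ and $x = -t^{\eta_k+1-\respow}$ to give the factor $(t^{\eta_k+1-\respow};t)_{\mu_{k-1}-\eta_k}$ appearing in \eqref{eq:residue_in_lemma}.

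The $k=1$ case is handled separately since neither the branching rule nor the cross factors apply: one-variable $P_{(\eta_1+j)}(z_1^{-1};t,0) = z_1^{-\eta_1-j}$ reduces the $j$-sum to Euler's $q$-exponential identity $\sum_j t^{\binom{j}{2}}(t^{\eta_1+1}/z_1)^j/(t;t)_j = (-t^{\eta_1+1}/z_1;t)_\infty$, which cancels against the corresponding tail of $(-z_1^{-1};t)_\infty$ to leave $z_1^{-\eta_1}/\prod_{\ell=0}^{\eta_1}(1+t^\ell/z_1)$; the residue of the latter at $z_1 = -t^\respow$ is computed via $\prod_{\ell \neq \respow, 0 \leq \ell \leq \eta_1}(t^\ell-t^\respow) = (-1)^{\eta_1-\respow} t^{\binom{\respow}{2}+\respow(\eta_1-\respow)}(t;t)_\respow (t;t)_{\eta_1-\respow}$, giving the closed form in terms of $\sqbinom{\eta_k}{\respow}_t$. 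The only real obstacle throughout is the careful bookkeeping of signs and $t$-powers across the Pochhammer telescopings; the combinatorial heart of the argument, which is the $q$-binomial sum over $j$, is mechanical once the branching rule is invoked.
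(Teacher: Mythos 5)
Your proposal matches the paper's proof essentially step for step: compute the residue of the singular $z_k$-factor, simplify the cross Pochhammer terms at $z_k=-t^\respow$, expand $P_{\eta+j\vec{e_k}}$ by the $q$-Whittaker branching rule, interchange the $j$- and $\mu$-sums, and resum over $j$ with the $q$-binomial theorem, treating $k=1$ separately. Your careful bookkeeping of the $t$-powers is in fact a virtue here --- carrying it through gives $\Res_{z_k=-t^\respow}\frac{1}{z_k(-z_k^{-1};t)_\infty} = \frac{(-1)^\respow t^{\binom{\respow+1}{2}}}{(t;t)_\respow(t;t)_\infty}$ rather than the paper's $t^{\binom{\respow}{2}}$, and correspondingly $t^{-\binom{\respow}{2}}$ (not $t^{-\binom{\respow+1}{2}}$) in the per-$i$ telescoping, so the paper's displayed formulas appear to carry a small $t$-power typo that is harmless since these residue terms are later shown to vanish in the limit.
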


\begin{proof}
We first show the $k \geq 2$ case, \eqref{eq:residue_in_lemma}. We find the residue of the product term
\begin{equation}\label{eq:product_quotient_term}
\frac{\prod_{1 \leq i \neq j \leq k} (z_i/z_j;t)_\infty}{\prod_{i=1}^k z_i(-z_i^{-1};t)_\infty}
\end{equation}
appearing in \eqref{eq:depoissonize_int_formula}. For the $z_k$-dependent term of the denominator, we see that 
\begin{equation}\label{eq:theta_computation_hidden}
\Res_{z_k=-t^\respow} \frac{1}{z_k(-z_k^{-1};t)_\infty} = \frac{(-1)^\respow t^{\binom{\respow}{2}}}{(t;t)_\respow (t;t)_\infty}.
\end{equation}
When we set $z+k = -t^\respow$, the terms in \eqref{eq:product_quotient_term} which depend on $z_i, i \neq k$ then become
\begin{equation}
\prod_{1 \leq i \leq k-1} \frac{(-z_i/t^\respow;t)_\infty (-t^\respow/z_i;t)_\infty}{z_i (-z_i^{-1};t)_\infty} = \prod_{1 \leq i \leq k-1} (1+t^{-\respow}z_i) z_i^\respow t^{-\binom{\respow+1}{2}} (-tz_i;t)_\infty
\end{equation} 
by an elementary computation, c.f. \cite[(5.17)]{van2023local}, where we let $\binom{\respow+1}{2} = (\respow^2+\respow)/2$ even when $\respow$ is negative. By the branching rule (\eqref{eq:def_skewP} and \Cref{thm:hl_qw_branch_formulas}), the sum over $j$ with the $q$-Whittaker polynomial in \eqref{eq:depoissonize_int_formula} becomes
\begin{align}
\begin{split}\label{eq:branch_cancel_j}
&\sum_{j=0}^{\eta_{k-1}-\eta_k} t^{\binom{j}{2}+(\eta_k+1)j} \sqbinom{\eta_{k-1}-\eta_k}{j}_t P_{\eta + j \vec{e}_k}(z_1^{-1},\ldots,z_{k-1}^{-1},-t^{-\respow};t,0) \\ 
&= \sum_{j=0}^{\eta_{k-1}-\eta_k} t^{\binom{j}{2}+(\eta_k+1)j} \sqbinom{\eta_{k-1}-\eta_k}{j}_t \sum_{\substack{\mu \in \Sig_{k-1} \\ \mu \prec \eta+j\vec{e}_k}} (-t^{-\respow})^{|\eta|+j-|\mu|} \sqbinom{\eta_1-\eta_2}{\eta_1-\mu_1}_t \cdots \sqbinom{\eta_{k-2}-\eta_{k-1}}{\eta_{k-2} - \mu_{k-2}}_t \\ 
&\times \sqbinom{\eta_{k-1}-\eta_k-j}{\eta_{k-1}-\mu_{k-1}}_t P_\mu(z_1^{-1},\ldots,z_{k-1}^{-1};t,0) \\ 
&= \sum_{\substack{\mu \in \Sig_{k-1} \\ \mu \prec \eta}}(-t^{-\respow})^{|\eta|-|\mu|} \sqbinom{\eta_1-\eta_2}{\eta_1-\mu_1}_t \cdots \sqbinom{\eta_{k-2}-\eta_{k-1}}{\eta_{k-2} - \mu_{k-2}}_t \sqbinom{\eta_{k-1}-\eta_{k}}{\eta_{k-1} - \mu_{k-1}}_t P_\mu(z_1^{-1},\ldots,z_{k-1}^{-1};t,0) \\ 
&\times \sum_{j=0}^{\mu_{k-1} - \eta_k} \sqbinom{\mu_{k-1} - \eta_k}{j}_t  t^{\binom{j}{2}+(\eta_k+1)j}(-t^{-\respow})^j \\ 
&= \sum_{\substack{\mu \in \Sig_{k-1} \\ \mu \prec \eta}}(-t^{-\respow})^{|\eta|-|\mu|} \prod_{i=1}^{k-1} \sqbinom{\eta_i-\eta_{i+1}}{\eta_i - \mu_i}_t P_\mu(z_1^{-1},\ldots,z_{k-1}^{-1};t,0) (t^{\eta_k-\respow+1};t)_{\mu_{k-1}-\eta_k},
\end{split}
\end{align}
where in the second equality we used the elementary identity
\begin{equation}
\label{eq:apply_qbin_swap}
\sqbinom{\eta_{k-1}-\eta_k-j}{\eta_{k-1}-\mu_{k-1}}_t \sqbinom{\eta_{k-1}-\eta_k}{j}_t = \sqbinom{\mu_{k-1}-\eta_k}{j}_t \sqbinom{\eta_{k-1}-\eta_k}{\eta_{k-1}-\mu_{k-1}}_t
\end{equation}
and in the last equality used the $q$-binomial theorem. Putting this all together, the residue is 
 \begin{multline}\label{eq:residue_in_lemma_proof}
 \frac{(-1)^\respow (t;t)_\infty^{k-2} t^{\binom{\respow}{2}}t^{\sum_{i=1}^k \binom{\eta_i}{2}}}{k! (t;t)_\respow \prod_{i=1}^{k-1}  (t;t)_{\eta_i - \eta_{i+1}}}\sum_{\substack{\mu \in \Sig_{k-1} \\ \mu \prec \eta}} (-t^{-\respow})^{|\eta|-|\mu|} \prod_{i=1}^{k-1} \sqbinom{\eta_i-\eta_{i+1}}{\eta_i-\mu_i}_t  (t^{\eta_k-\respow+1};t)_{\mu_{k-1}-\eta_k} \\ 
 \times \frac{1}{(2 \pi \bi)^{k-1}} \int_{\T^{k-1}} (1-t^\respow + z_1+\cdots+z_{k-1})^n  P_\mu(z_1^{-1},\ldots,z_{k-1}^{-1};t,0) \\ 
 \times  \prod_{1 \leq i \neq j \leq k-1} (z_i/z_j;t)_\infty \prod_{1 \leq i \leq k-1} (1+t^{-\respow}z_i) z_i^\respow t^{-\binom{\respow+1}{2}} (-tz_i;t)_\infty \frac{dz_i}{z_i},
 \end{multline}
where we have also shifted the contour from $c \T^{k-1}$ to $\T^{k-1}$, which we may do because there are no poles. This completes the $k \geq 2$ case.

For $k=1$, we simply note that $P_{(\eta_k+j)}(z_1^{-1};t,0) = z_1^{-(\eta_k+j)}$, and
\begin{align}
\begin{split}
\sum_{j  = 0}^\infty t^{j(\eta_k+1)+\binom{j}{2}} \frac{z_1^{-(\eta_k+j)}}{(t;t)_j (-z_1^{-1};t)_\infty} &= z_1^{-\eta_k} \frac{(-t^{\eta_k+1}z_1^{-1};t)_\infty}{(-z_1^{-1};t)_\infty}
\end{split}
\end{align}
by the $q$-binomial theorem. By \eqref{eq:theta_computation_hidden}, we therefore have
\begin{align}
\begin{split}
E(n,\eta,\respow) &= \Res_{z_1=-t^\respow} (1+z_1)^n t^{\binom{\eta_k}{2}} z_1^{-\eta_k} \frac{(-t^{\eta_k+1}z_1^{-1};t)_\infty}{z_1(-z_1^{-1};t)_\infty} \\ 
&= (1-t^\respow)^n \frac{(-1)^{\eta_k+\respow} t^{\binom{\respow}{2}+\binom{\eta_k}{2}-\respow \eta_k}}{(t;t)_{\eta_k}} \sqbinom{\eta_k}{\respow}_t.
\end{split}
\end{align}
\end{proof}

\begin{lemma}\label{thm:res_to_0}
Let $k \in \Z_{\geq 1}$ and $E(n,\eta,\respow)$ be as in \eqref{eq:residue_in_lemma} or \eqref{eq:k=1_residue_in_lemma}. Fix $(L_1,\ldots,L_k) \in \Sig_k$, and define 
\begin{equation}
\eta(n) := (L_i + [\log_{t^{-1}} n+\zeta])_{1 \leq i \leq k)}
\end{equation}
where $[\cdot]$ is the nearest integer function as usual. Then for any $\respow \in \Z_{\geq 0}$,
\begin{equation}\label{eq:E_dies}
\lim_{n \to \infty} E(n,\eta(n),\respow) = 0.
\end{equation}
\end{lemma}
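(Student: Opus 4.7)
The plan is to bound $|E(n,\eta(n),\respow)|$ by splitting off a super-polynomially decaying factor, a factor of polynomial growth, and an exponentially decaying factor, with the exponential decay dominating.

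The case $k=1$ is essentially immediate from the explicit formula \eqref{eq:k=1_residue_in_lemma}. Here $\eta_k(n) = L_k + [\log_{t^{-1}} n + \zeta] \to \infty$. For $\respow = 0$ the factor $(1-t^\respow)^n$ vanishes identically. For $\respow \geq 1$, $(1-t^\respow)^n$ decays exponentially; $\sqbinom{\eta_k}{\respow}_t$ and $1/(t;t)_{\eta_k}$ converge to finite constants; and the exponent simplifies as $\binom{\respow}{2} + \binom{\eta_k}{2} - \respow\eta_k = \binom{\eta_k-\respow}{2} - \respow$, whose first term gives super-polynomial decay of $t^{\binom{\eta_k-\respow}{2}}$. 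So $E(n,\eta(n),\respow) \to 0$ trivially.

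For $k \geq 2$, the main move is to deform the contour $\T^{k-1}$ in \eqref{eq:residue_in_lemma} down to $\rho\T^{k-1}$ for some small $\rho > 0$. Inside the unit disk, the only singularity of the integrand in each $z_i$ is at $z_i = 0$: the factors $\prod_{j\neq i}(z_i/z_j;t)_\infty$, $(1 + t^{-\respow} z_i)$, $z_i^\respow$, and $(-tz_i;t)_\infty$ are entire in $z_i$, while $P_\mu(z_1^{-1},\ldots,z_{k-1}^{-1};t,0)$ and $dz_i/z_i$ contribute only a pole at $z_i = 0$. Thus the deformation is valid. Choosing $\rho < t^\respow/(k-1)$ (or $\rho < 1/(k-1)$ if $\respow = 0$) yields
\[
|1 - t^\respow + z_1 + \ldots + z_{k-1}| \leq 1 - t^\respow + (k-1)\rho < 1
\]
on $\rho\T^{k-1}$, so $|(1-t^\respow + z_1 + \ldots + z_{k-1})^n| \leq (1-\delta)^n$ for some $\delta > 0$.

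It remains to bound the other factors in the integrand on $\rho\T^{k-1}$ by a polynomial in $n$. The only term whose size genuinely depends on $n$ is $P_\mu(z_1^{-1},\ldots,z_{k-1}^{-1};t,0)$, since $\mu_i \asymp \log_{t^{-1}} n$. Using the translation invariance of Hall–Littlewood polynomials (the $q=0$ case of \Cref{thm:signature_shift} applied with $d = \eta_k$), we write
\[
P_\mu(z_1^{-1},\ldots,z_{k-1}^{-1};t,0) = (z_1\cdots z_{k-1})^{-\eta_k} \, P_{\mu - \eta_k[k-1]}(z_1^{-1},\ldots,z_{k-1}^{-1};t,0).
\]
The shifted partition $\mu - \eta_k[k-1]$ has parts in bounded intervals depending only on $L_1,\ldots,L_k$ (since the gaps $\eta_i - \eta_{i+1} = L_i - L_{i+1}$ are constant in $n$ and $\mu \prec \eta$), so $|P_{\mu-\eta_k[k-1]}(\mathbf{z}^{-1};t,0)|$ is bounded uniformly on $\rho\T^{k-1}$. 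Hence $|P_\mu(\mathbf{z}^{-1};t,0)| \leq C \rho^{-(k-1)\eta_k} = n^{O(1)}$. All other $z_i$-dependent factors on $\rho\T^{k-1}$ are bounded by absolute constants.

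Finally, outside the integral the prefactor $t^{\sum_i \binom{\eta_i(n)}{2}}$ is at most $1$ (indeed super-polynomially small); the sum over $\mu$ runs over a finite set of size $\prod_i(L_i - L_{i+1}+1)$ independent of $n$; and each summand is bounded, because $(-t^{-\respow})^{|\eta|-|\mu|}$ has $|\eta|-|\mu| \leq L_1 - L_k$ bounded, the $q$-binomials are bounded, and for $n$ large enough that $\eta_k(n) \geq \respow$ the Pochhammer $(t^{\eta_k - \respow + 1};t)_{\mu_{k-1}-\eta_k}$ lies in $(0,1]$. Combining everything gives $|E(n,\eta(n),\respow)| \leq C \cdot n^{O(1)} \cdot (1-\delta)^n \to 0$. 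The only nontrivial step is justifying the contour deformation and pulling the $\eta_k$-growth out of $P_\mu$ via translation invariance; both require just a brief check.
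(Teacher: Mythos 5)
Your overall strategy is the same as the paper's: shrink the contour to $\rho\T^{k-1}$ with $\rho$ small enough that $1-t^\respow+(k-1)\rho<1$, giving the exponential decay $(1-\delta)^n$ from the $(1-t^\respow+z_1+\cdots+z_{k-1})^n$ factor, and then use the translation invariance from Corollary~\ref{thm:signature_shift} to show the remaining $n$-dependence in the integrand is only polynomial, which is swamped by the exponential.

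There is one inaccurate claim in your final accounting of the $\mu$-sum. You assert that each summand is bounded because $|\eta|-|\mu|\le L_1-L_k$. This is false: $\mu\in\Sig_{k-1}$ has one fewer part than $\eta\in\Sig_k$, so
\begin{equation*}
|\eta|-|\mu| \;=\; \eta_k \;+\; \sum_{i=1}^{k-1}\bigl(\eta_i-\mu_i\bigr),
\end{equation*}
and while the sum $\sum_{i=1}^{k-1}(\eta_i-\mu_i)$ is indeed at most $\eta_1-\eta_k=L_1-L_k$, the extra $\eta_k$ grows like $\log_{t^{-1}}n$. Hence $\bigl|(-t^{-\respow})^{|\eta|-|\mu|}\bigr|=t^{-\respow(|\eta|-|\mu|)}$ is of order $n^{\respow}$, not bounded. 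This does not break the proof: you should simply absorb the extra $n^{\respow}$ into the $n^{O(1)}$ that is already present from your bound on $P_\mu$, and the $(1-\delta)^n$ factor still dominates. But the sentence ``each summand is bounded'' must be corrected to ``each summand grows at most polynomially in $n$'' for the argument to be as stated. (Minor additional quibble: $\prod_{j\neq i}(z_i/z_j;t)_\infty$ is not literally entire in $z_i$, since the paired factors $(z_j/z_i;t)_\infty$ contribute an essential singularity at $z_i=0$; your conclusion that there are no poles in the annulus $\rho<|z_i|<1$ is nonetheless correct.)
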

\begin{proof}
The case $k=1$ is trivial from \Cref{thm:compute_a_residue}, since the $(1-t^\respow)^n$ term dominates in \eqref{eq:k=1_residue_in_lemma}, so let us suppose $k \geq 2$. The integrands in $E(n,\eta(n),\respow)$ have no poles on the punctured plane, so we first shrink the contour to $c\T$ where $c > 0$ is such that $1-t^\respow+c(k-1) < 1$. We now rewrite the formula for $E(n,\eta(n),\respow)$ as follows. Define $\tLL := (L_1-L_k,\ldots,L_{k-1}-L_k,0)$ and, for each $\mu$ in the sum in \Cref{thm:compute_a_residue} (with $\eta(n)$ substituted for $\eta$), let $\tmu := (\mu_i - \eta_k(n))_{1 \leq i \leq k-1}$. Then by \Cref{thm:signature_shift},
\begin{equation}
P_\mu(z_1^{-1},\ldots,z_{k-1}^{-1};t,0) = (z_1 \cdots z_{k-1})^{-\eta_k(n)}P_{\tmu}(z_1^{-1},\ldots,z_{k-1}^{-1};t,0).
\end{equation}
Substituting this and moving all $n$-dependent terms inside the integral, we have
\begin{multline}\label{eq:massaged_integral_about_to_die}
E(n,\eta(n),\respow) = \frac{(-1)^\respow (t;t)_\infty^{k-2} t^{\binom{\respow}{2}}}{k! (t;t)_\respow \prod_{i=1}^{k-1}  (t;t)_{\tLL_i - \tLL_{i+1}}}\sum_{\substack{\tmu \in \Sig_{k-1} \\ \tmu \prec \tLL}} (-t^{-\respow})^{|\tLL|-|\tmu|} \prod_{i=1}^{k-1} \sqbinom{\tLL_i - \tLL_{i+1}}{\tLL_i - \tmu_i}_t  \\ 
 \times \frac{1}{(2 \pi \bi)^{k-1}} \int_{\T^{k-1}} \left[(t^{\eta_k(n)-\respow+1};t)_{\tmu_{k-1}}(1-t^\respow + z_1+\cdots+z_{k-1})^n (-t^{-\respow})^{\eta_k(n)}t^{\sum_{i=1}^k \binom{\eta_i}{2}}\right]   \\ 
 \times  P_\mu(z_1^{-1},\ldots,z_{k-1}^{-1};t,0)\prod_{1 \leq i \neq j \leq k-1} (z_i/z_j;t)_\infty \prod_{1 \leq i \leq k-1} (1+t^{-\respow}z_i) z_i^\respow t^{-\binom{\respow+1}{2}} (-tz_i;t)_\infty \frac{dz_i}{z_i}.
\end{multline}
All $n$-dependent terms in \eqref{eq:massaged_integral_about_to_die} are inside the square braces, and the modulus of the term in braces is 
\begin{equation}
O\left(e^{\log(1-t^\respow+c(k-1))n + \text{const}_1 \eta_k(n)^2 + \text{const}_2 \eta_k(n)}\right).
\end{equation}
Since $\eta_k(n) = L_k + [\log_{t^{-1}} n+\zeta]$, the $\log(1-t^\respow+c(k-1))n$ term dominates, hence the function inside the exponential goes to $-\infty$ since we took $c$ such that $1-t^\respow+c(k-1) < 1$. Because the sum over $\tmu$ is finite independent of $n$ and the contour of integration is compact and independent of $n$, we may bring the limit in \eqref{eq:E_dies} inside the integral, and this completes the proof.
\end{proof}

\begin{proof}[Proof of \Cref{thm:jordan_limit_nometric}]
We set $t=1/q$ throughout the proof. Since $\cL_{k,\cdot,\cdot}$ is a discrete random variable, it suffices to show 
\begin{equation}\label{eq:main_thm_wts}
\lim_{\ell \to \infty} \Pr((J(A_{n_\ell})_i')_{1 \leq i \leq k} = (L_i + [\log_{t^{-1}} n_\ell + \zeta])_{1 \leq i \leq k)}) = \Pr(\cL_{k,t,t^\zeta} = (L_1,\ldots,L_k)).
\end{equation}
Let us define 
\begin{equation}
\eta(n) := (L_i + [\log_{t^{-1}} n + \zeta])_{1 \leq i \leq k)}
\end{equation}
as in \Cref{thm:res_to_0}, and note that
\begin{equation}
\label{eq:eta_difference}
\eta_i(n)-\eta_j(n) = L_i - L_j.
\end{equation}
By reexpressing the left hand side of \eqref{eq:main_thm_wts} via \Cref{thm:cite_borodin} and \Cref{thm:depoissonize_int_formula}, and the right hand side via \Cref{thm:series_to_contour_cL}, we see that \eqref{eq:main_thm_wts} is equivalent to
\begin{multline}
\label{eq:explicit_main_thm_wts}
\lim_{\ell \to \infty} \frac{(t;t)_\infty^{k-1}}{k! (2 \pi \bi)^k  \prod_{i=1}^{k-1} (t;t)_{\eta(n_\ell)_i - \eta(n_\ell)_{i+1}}} \int_{c\T^k} (1+z_1+\cdots+z_k)^{n_\ell} t^{\sum_{i=1}^k \binom{\eta({n_\ell})_i}{2}} \prod_{1 \leq i \neq j \leq k} (z_i/z_j;t)_\infty \\ 
  \times \sum_{j=0}^{\eta({n_\ell})_{k-1}-\eta({n_\ell})_k} t^{j(\eta({n_\ell})_k+1)} t^{\binom{j}{2}} \sqbinom{\eta({n_\ell})_{k-1}-\eta({n_\ell})_k}{j}_t \frac{P_{\eta({n_\ell})+j \vec{e_k}}(z_1^{-1},\ldots,z_k^{-1};t,0)}{\prod_{i=1}^k (-z_i^{-1};t)_\infty}  \prod_{i=1}^k \frac{dz_i}{z_i} \\ 
= \frac{(t;t)_\infty^{k-1}}{k! (2 \pi \bi)^k} \prod_{i=1}^{k-1} \frac{t^{\binom{L_i-L_k}{2}}}{(t;t)_{L_i-L_{i+1}}} \int_{\tG^k} e^{t^{L_k+\zeta}(w_1+\cdots+w_k)} \frac{\prod_{1 \leq i \neq j \leq k} (w_i/w_j;t)_\infty}{\prod_{i=1}^k (-w_i^{-1};t)_\infty (-tw_i;t)_{\infty}} \\ 
\times \sum_{j=0}^{L_{k-1}-L_k} t^{\binom{j+1}{2}} \sqbinom{L_{k-1}-L_k}{j}_t  P_{(L_1-L_k,\ldots,L_{k-1}-L_k,j)}(w_1^{-1},\ldots,w_k^{-1};t,0)  \prod_{i=1}^k \frac{dw_i}{w_i}.
\end{multline}
The integrand in \eqref{eq:explicit_main_thm_wts} is symmetric in $z_1,\ldots,z_k$ and has residues at $z_i=-t^0,-t^1,\ldots$. For the rest of the proof, fix $r$ to be any positive integer such that $t^r < 1/k$. Shrinking each contour to $t^{r+1/2}\T^k$, we encounter the pole at $z_i = -t^0,\ldots,-t^{r}$ for each $i=1,\ldots,k$. The corresponding residues were computed for $z_k$ in \Cref{thm:compute_a_residue} and are the same for each $z_i$ by symmetry, so we obtain
\begin{multline}\label{eq:partial_residue_expansion}
\text{LHS\eqref{eq:explicit_main_thm_wts}} = \lim_{\ell \to \infty} k \sum_{\respow = 0}^{r} E(n_\ell,\eta(n_\ell),\respow) + \frac{(t;t)_\infty^{k-1}}{k! (2 \pi \bi)^k  \prod_{i=1}^{k-1} (t;t)_{\eta_i - \eta_{i+1}}} \int_{t^{r+1/2}\T^k} (1+z_1+\cdots+z_k)^{n_\ell}  \\ 
  \times t^{\sum_{i=1}^k \binom{\eta_i}{2}}\prod_{1 \leq i \neq j \leq k} (z_i/z_j;t)_\infty \sum_{j=0}^{\eta_{k-1}-\eta_k} t^{j(\eta_k+1)} t^{\binom{j}{2}} \sqbinom{\eta_{k-1}-\eta_k}{j}_t \frac{P_{\eta+j \vec{e_k}}(z_1^{-1},\ldots,z_k^{-1};t,0)}{\prod_{i=1}^k (-z_i^{-1};t)_\infty}  \prod_{i=1}^k \frac{dz_i}{z_i}.
\end{multline}
The utility of the residue expansion we have just carried out is that now the integral is over a smaller contour, allowing later error analysis which would break down if the real parts of $z_i$ became too negative. We will now algebraically manipulate the integral in \eqref{eq:partial_residue_expansion} into a form more suitable for asymptotics, and to control subscripts we do so with $n$ in place of $n_\ell$. Let
\begin{equation}
\label{eq:def_eps_error}
\eps(n) := (\log_{t^{-1}} (n) + \zeta) - [\log_{t^{-1}}(n) + \zeta],
\end{equation}
so that $\lim_{\ell \to \infty} \eps(n_\ell) = 0$ by our choice of the subsequence $n_\ell$. We then make a change of variables to
\begin{equation}
w_i := t^{-\eta_k(n)}z_i = n t^{-\zeta - L_k + \eps(n)}z_i.
\end{equation}
Note that for $n=n_\ell$ this is $n_\ell t^{-\zeta - L_k}z_i (1+o(1))$ as $\ell \to \infty$, but it will be clearer later to have the $t^{\eps(n_\ell)} = 1+o(1)$ multiplicative error term written explicitly, as the rate at which $\eps(n_\ell)$ goes to $0$ influences our choice of contours later.

With this change of variables, the integral on the left hand side of \eqref{eq:int_for_asymptotics} becomes
\begin{align}
\label{eq:int_var_change}
\begin{split}
&\frac{(t;t)_\infty^{k-1}}{k! (2 \pi \bi)^k  \prod_{i=1}^{k-1} (t;t)_{L_i - L_{i+1}}} \int_{t^{-\eta_k(n)+r+1/2}\T^k} \left(1+\frac{t^{\zeta+L_k-\eps(n)}}{n}(w_1+\cdots+w_k))\right)^n \\ 
&\times \sum_{j=0}^{L_{k-1}-L_k} t^{j(\eta_k(n)+1)+\binom{j}{2}} \sqbinom{L_{k-1}-L_k}{j}_t (t^{-\eta_k(n)})^{|\eta(n)|+j} P_{\eta(n)+j \vec{e_k}}(w_1^{-1},\ldots,w_k^{-1};t,0) \\ 
&\times  \frac{\prod_{1 \leq i \neq j \leq k} (w_i/w_j;t)_\infty \prod_{i=1}^k t^{\binom{\eta_i(n)}{2}}}{{\prod_{i=1}^k (-t^{-\eta_k(n)}w_i^{-1};t)_\infty}}  \prod_{i=1}^k \frac{dw_i}{w_i},
\end{split}
\end{align}
where we have used that $P_{\eta+j\vec{e_k}}$ is homogeneous of degree $|\eta|+j$. By the elementary identity 
\begin{equation}
\label{eq:favorite_binomial_split}
\binom{a+b}{2} = \binom{a}{2} + \binom{b}{2} + ab
\end{equation}
and \eqref{eq:eta_difference} we have 
\begin{equation}\label{eq:eta_i_to_k}
\binom{\eta_i}{2} = \binom{\eta_k}{2} + \binom{L_i-L_k}{2} + (\eta_i-\eta_k)\eta_k.
\end{equation}
Additionally, by \Cref{thm:signature_shift} and \eqref{eq:eta_difference}, 
\begin{equation}
\label{eq:shift_by_tau}
P_{\eta+j \vec{e_k}}(w_1^{-1},\ldots,w_k^{-1};t,0) = (w_1 \cdots w_k)^{-\eta_k}P_{(L_1-L_k,\ldots,L_{k-1}-L_k,j)}(w_1^{-1},\ldots,w_k^{-1};t,0).
\end{equation}
Substituting \eqref{eq:eta_i_to_k} for $1 \leq i \leq k$ and \eqref{eq:shift_by_tau} into \eqref{eq:int_var_change} yields
\begin{align}
\label{eq:int_w_2}
\begin{split}
&\text{RHS\eqref{eq:int_var_change}} = \frac{(t;t)_\infty^{k-1}}{k! (2 \pi \bi)^k} \prod_{i=1}^{k-1} \frac{t^{\binom{L_i-L_k}{2}}}{(t;t)_{L_i-L_{i+1}}} \int_{t^{-\eta_k(n)+r+1/2}\T^k}  \left(1+\frac{t^{\zeta+L_k-\eps(n)}}{n}(w_1+\cdots+w_k))\right)^n \\ 
&\times  \prod_{1 \leq i \neq j \leq k} (w_i/w_j;t)_\infty  \prod_{i=1}^k \frac{w_i^{-\eta_k(n)}t^{\binom{\eta_k(n)}{2}+(\eta_i(n)-\eta_k(n))\eta_k(n)}t^{-\eta_k(n)\eta_i(n)}}{{(-t^{-\eta_k(n)}w_i^{-1};t)_\infty}} \\ 
&\times \sum_{j=0}^{L_{k-1}-L_k} t^{j+\binom{j}{2}} \sqbinom{L_{k-1}-L_k}{j}_t P_{(L_1-L_k,\ldots,L_{k-1}-L_k,j)}(w_1^{-1},\ldots,w_k^{-1};t,0)  \prod_{i=1}^k \frac{dw_i}{w_i}.
\end{split}
\end{align}
Noting that 
\begin{equation}
\frac{w_i^{-\eta_k(n)}t^{\binom{\eta_k(n)}{2}+(\eta_i(n)-\eta_k(n))\eta_k(n)}t^{-\eta_k(n)\eta_i(n)}}{{(-t^{-\eta_k(n)}w_i^{-1};t)_\infty}} = \frac{1}{(-w_i^{-1};t)_\infty (-tw_i;t)_{\eta_k(n)}},
\end{equation}
and shifting contours to
\begin{multline}
\Gamma(r,n) := \{x+\bi y: x^2+y^2=1, x > 0\}  \cup \{x + \bi: -t^{-\eta_k(n)+r+1/2} < x \leq 0 \} \\ \cup \{x - \bi: -t^{-\eta_k(n)+r+1/2} < x \leq 0 \} \cup \{-t^{-\eta_k(n)+r+1/2} + \bi y: -1 \leq y \leq 1\}
\end{multline}
(see \Cref{fig:Gamma1_decomp}), yields
\begin{align}
\label{eq:int_w_3}
\begin{split}
&\text{RHS\eqref{eq:int_w_2}} = \frac{(t;t)_\infty^{k-1}}{k! (2 \pi \bi)^k} \prod_{i=1}^{k-1} \frac{t^{\binom{L_i-L_k}{2}}}{(t;t)_{L_i-L_{i+1}}} \int_{\Gamma(r,n)^k} \left(1+\frac{t^{\zeta+L_k-\eps(n)}}{n}(w_1+\cdots+w_k)\right)^n \\ 
&\times \frac{\prod_{1 \leq i \neq j \leq k} (w_i/w_j;t)_\infty}{\prod_{i=1}^k (-w_i^{-1};t)_\infty (-tw_i;t)_{\eta_k(n)}} \sum_{j=0}^{L_{k-1}-L_k} t^{\binom{j+1}{2}} \sqbinom{L_{k-1}-L_k}{j}_t  \\ 
&\times P_{(L_1-L_k,\ldots,L_{k-1}-L_k,j)}(w_1^{-1},\ldots,w_k^{-1};t,0)  \prod_{i=1}^k \frac{dw_i}{w_i}.
\end{split}
\end{align}
For the asymptotics, we will decompose the integration contour into a main term contour $\Gamma_1(r,n)$ and error term contour $\Gamma_2(r,n)$. Let $\xi(n)$ be any function such that
\begin{enumerate}
\item $\xi(n) \to \infty$ as $n \to \infty$,
\item $\xi(n) \ll \log_{t^{-1}} n$ as $n \to \infty$, and
\item $\xi(n_\ell) \ll -\log \eps(n_\ell)$ as $\ell \to \infty$.
\end{enumerate}
Any sufficiently slowly-growing function will do, but what we mean by `slowly-growing' depends on how quickly the shifts $\log_t n_\ell$ become close to the appropriate lattice, and this is why we gave the error term $\eps(n_\ell)$ a name earlier.

Now decompose $\Gamma(r,n)$ as
\begin{align}\label{eq:contours}
\begin{split}
\Gamma(r,n) &= \Gamma_1(n) \cup \Gamma_2(r,n) \\ 
\Gamma_1(n) &= \{x + \bi: -t^{-\xi(n)} < x \leq 0 \} \cup \{x - \bi: -t^{-\xi(n)}  < x \leq 0\} \cup \{x+\bi y: x^2+y^2=1, x > 0\} \\ 
\Gamma_2(r,n) &= \{-t^{-\eta_k(n)+r-1/2} + \bi y: -1 \leq y \leq 1\} \cup \{x + \bi: -t^{-\eta_k(n)+r-1/2}< x \leq  -t^{-\xi(n)}  \} \\ 
&  \cup \{x - \bi: -t^{-\eta_k(n)+r-1/2}< x \leq  -t^{-\xi(n)} \}.
\end{split}
\end{align}
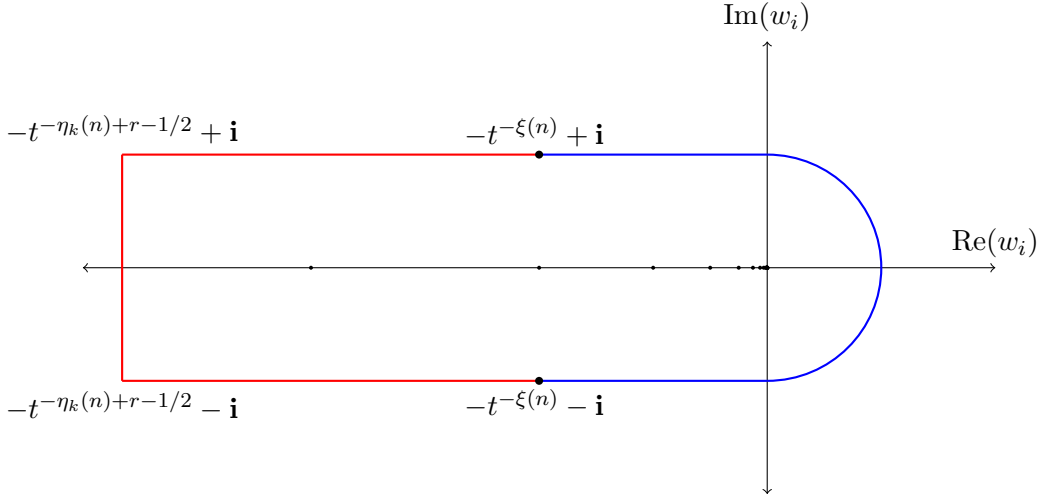
\begin{figure}[htbp]
\begin{center}
\begin{tikzpicture}[scale=1.5]
\def\b{2};
\def\t{.5};
\def\d{-5.656}; 
  \draw[<->] (0,-2) -- (0,2) node[above] {$\Im(w_i)$};
  \draw[<->] (-6,0) -- (2,0) node[above] {$\Re(w_i)$};

  \draw[thick,blue] (0,-1) arc (-90:90:1);

  \draw[thick,blue] (-\b,1) node[left,black,yshift=3mm,xshift=10mm] {$-t^{-\xi(n)}+\bi$} -- (0,1) ;
  \draw[thick,blue] (-\b,-1) node[left,black,yshift=-3mm,xshift=10mm] {$-t^{-\xi(n)}-\bi$} -- (0,-1) ;

    \draw[thick,red] (\d,1)  -- (-\b,1);
    \draw[thick,red] (\d,-1)  -- (-\b,-1);

    \draw[thick,red] (\d,1) node[above,black] {$-t^{-\eta_k(n)+r-1/2}+\bi$} -- (\d,-1) node[below,black] {$-t^{-\eta_k(n)+r-1/2}-\bi$};

\fill (-\b,1) circle (1pt);
\fill (-\b,-1) circle (1pt);

  \foreach \n in {-2,...,20} {
    \pgfmathsetmacro\x{-pow(\t,\n)}
    \fill (\x,0) circle (1pt);
  }

\end{tikzpicture}
\caption{The contour $\Gamma(r,n)$ decomposed as in \eqref{eq:contours}, with $\Gamma_1(n)$ in blue and $\Gamma_2(r,n)$ in red, and the poles of the integrand at $w_i = -t^\Z$ shown.
}\label{fig:Gamma1_decomp}
\end{center}
\end{figure}

We further define another error term contour
\begin{equation}
\Gamma_3(n) = \{x + \bi:x \leq  -t^{- \xi(n)}\} \cup \{x - \bi: x \leq -t^{- \xi(n)}  \},
\end{equation}
so that 
\begin{equation}\label{eq:tG_decomp}
\Gamma_1(n) \cup \Gamma_3(n) = \{x + \bi: x \leq 0 \} \cup \{x - \bi:  x \leq 0\} \cup \{x+\bi y: x^2+y^2=1, x > 0\} = \tG
\end{equation}
\begin{figure}[htbp]
\begin{center}
\begin{tikzpicture}[scale=1.5]
\def\b{2}
  \draw[<->] (0,-2) -- (0,2) node[above] {$\Im(w_i)$};
  \draw[<->] (-6,0) -- (2,0) node[above] {$\Re(w_i)$};

  \draw[thick,blue] (0,-1) arc (-90:90:1);

  \draw[thick,blue] (-\b,1) node[left,black,yshift=3mm,xshift=10mm] {$-t^{-\xi(n)}+\bi$} -- (0,1) ;
  \draw[thick,blue] (-\b,-1) node[left,black,yshift=-3mm,xshift=10mm] {$-t^{-\xi(n)}-\bi$} -- (0,-1) ;

    \draw[thick,green] (-6,1) node[left,black] {$\cdots$} -- (-\b,1);
    \draw[thick,green] (-6,-1) node[left,black] {$\cdots$} -- (-\b,-1);

\fill (-\b,1) circle (1pt);
\fill (-\b,-1) circle (1pt);

\end{tikzpicture}
\caption{The contour $\tG$ decomposed as in \eqref{eq:tG_decomp}, with $\Gamma_1(n)$ in blue and $\Gamma_3(n)$ in green.
}\label{fig:tG_decomp}
\end{center}
\end{figure}
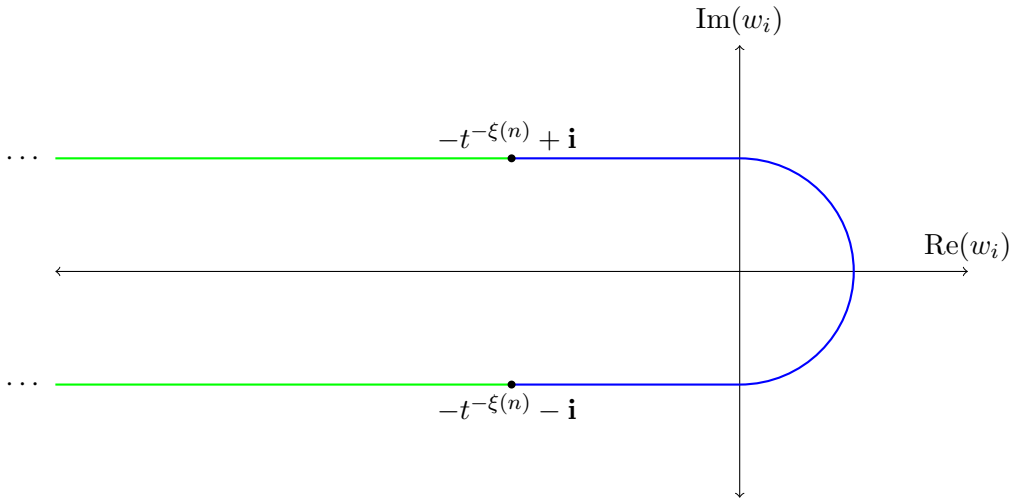
is independent of $n$. 

To compress notation we abbreviate the $n$-independent part of the integrand as
\begin{multline}
\label{eq:integrand_junk}
\tf(w_1,\ldots,w_k) := \frac{(t;t)_\infty^{k-1}}{k!} \prod_{i=1}^{k-1} \frac{t^{\binom{L_i-L_k}{2}}}{(t;t)_{L_i-L_{i+1}}} \frac{\prod_{1 \leq i \neq j \leq k} (w_i/w_j;t)_\infty}{\prod_{i=1}^k w_i(-w_i^{-1};t)_\infty }  \\ 
\times \sum_{j=0}^{L_{k-1}-L_k} t^{\binom{j+1}{2}} \sqbinom{L_{k-1}-L_k}{j}_t  P_{(L_1-L_k,\ldots,L_{k-1}-L_k,j)}(w_1^{-1},\ldots,w_k^{-1};t,0).
\end{multline}
When $k=1$, we interpret the above by setting $L_{k-1} - L_k = \infty$ and $\sqbinom{\infty}{j}_t = \frac{1}{(t;t)_j}$, similarly to \Cref{thm:series_to_contour_cL}. We have shown by the above manipulations that
\begin{multline}
\text{LHS\eqref{eq:explicit_main_thm_wts}} = \lim_{\ell \to \infty} k \sum_{\respow=0}^r E({n_\ell},\eta({n_\ell}),\respow) \\ 
+ \lim_{\ell \to \infty} \frac{1}{(2 \pi \bi)^k} \int_{\Gamma({n_\ell},r)^k} \tf(w_1,\ldots,w_k) \frac{\left(1+\frac{t^{\zeta+L_k-\eps(n_\ell)}}{{n_\ell}}(w_1+\cdots+w_k)\right)^{n_\ell}}{\prod_{i=1}^k (-tw_i;t)_{\eta_k({n_\ell})}} \prod_{i=1}^k dw_i.
\end{multline}
The limit of the residues above is $0$ by \Cref{thm:res_to_0} since $r$ is independent of $\ell$, so to show \eqref{eq:explicit_main_thm_wts} it suffices to show 
\begin{equation}\label{eq:int_for_asymptotics}
\lim_{\ell \to \infty} \frac{1}{(2 \pi \bi)^k} \int_{\Gamma({n_\ell},r)^k} \tf(w_1,\ldots,w_k) \frac{\left(1+\frac{t^{\zeta+L_k-\eps(n_\ell)}}{{n_\ell}}(w_1+\cdots+w_k)\right)^{n_\ell}}{\prod_{i=1}^k (-tw_i;t)_{\eta_k({n_\ell})}} \prod_{i=1}^k dw_i = \text{RHS\eqref{eq:explicit_main_thm_wts}}.
\end{equation}
By subtracting the right hand side of \eqref{eq:int_for_asymptotics}, bringing it inside the limit, and splitting the resulting limit into three terms, \eqref{eq:int_for_asymptotics} is equivalent to
\begin{align}
\label{eq:split_integral1}
&\lim_{\ell \to \infty} \frac{1}{(2 \pi \bi)^k} \int_{\Gamma_1(n_\ell)^k} \hspace{-4ex}\tf(w_1,\ldots,w_k)\left(\frac{ \left(1+\frac{t^{\zeta+L_k-\eps(n_\ell)}(w_1+\cdots+w_k)}{n_\ell}\right)^{n_\ell} }{\prod_{i=1}^k(-tw_i;t)_{\eta_k(n_\ell)}} - \frac{e^{t^{\zeta+L_k-\eps(n_\ell)}(w_1+\cdots+w_k)}}{\prod_{i=1}^k(-tw_i;t)_{\infty}}  \right) \prod_{i=1}^k dw_i \\  
&+ \lim_{\ell \to \infty}\frac{1}{(2 \pi \bi)^k} \int_{\Gamma(r,n_\ell)^k \setminus \Gamma_1(n_\ell)^k} \frac{\tf(w_1,\ldots,w_k)\left(1+\frac{t^{\zeta+L_k-\eps(n_\ell)}(w_1+\cdots+w_k)}{n_\ell}\right)^{n_\ell} }{\prod_{i=1}^k(-tw_i;t)_{\eta_k(n_\ell)}} \prod_{i=1}^k dw_i \label{eq:split_integral2} \\
&-\lim_{\ell \to \infty} \frac{1}{(2 \pi \bi)^k} \int_{\tG^k \setminus \Gamma_1(n_\ell)^k} \frac{\tf(w_1,\ldots,w_k)e^{t^{\zeta+L_k-\eps(n_\ell)}(w_1+\cdots+w_k)}}{\prod_{i=1}^k(-tw_i;t)_{\infty}}\prod_{i=1}^k dw_i \label{eq:split_integral3}  = 0.
\end{align}
We will show each of the three lines \eqref{eq:split_integral1}, \eqref{eq:split_integral2} and \eqref{eq:split_integral3} above are $0$ separately. 

The fact that the third line \eqref{eq:split_integral3} is $0$ follows because (a) by \Cref{thm:series_to_contour_cL} the integral is finite when $\tG^k \setminus \Gamma_1(n_\ell)^k$ is replaced by the whole contour $\tG^k$, (b) $\Gamma_1(r,n-1) \subset \Gamma_1(r,n)$ for all (large enough) $n$, and (c) $\bigcup_n \Gamma_1(r,n)^k = \tG^k$ because $\xi(n) \to \infty$. 

For the other two lines \eqref{eq:split_integral1} and \eqref{eq:split_integral2}, we use the bounds on $\tf$ and $(z;t)_n$ which are quoted from \cite{van2023local}:

\begin{lemma}[{\cite[Lemma 4.9]{van2023local}}]\label{thm:tf_bound_quoted}
For any neighborhood $-1 \in U \subset \C$, there exist positive constants $C,c_2$ such that the bound
\begin{equation}
\label{eq:tf_bound_quoted}
|\tf(w_1,\ldots,w_k)| \leq C \prod_{i=1}^k e^{\frac{k-1}{2}(\log t^{-1}) \floor{\log_t |w_i|}^2 + c_2 \floor{\log_t |w_i|}}
\end{equation}
holds for any $w_1,\ldots,w_k \in \C \setminus (\D \cup U)$, where $\D$ is the open unit disc.
\end{lemma}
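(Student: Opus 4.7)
The plan is to bound each factor constituting $\tf$ separately on $\C \setminus (\D \cup U)$. The only genuine source of growth in the $|w_i|$ is the numerator product $\prod_{1 \leq i \neq j \leq k}(w_i/w_j;t)_\infty$; every other factor will be shown to be uniformly bounded.

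First, the prefactor $\frac{(t;t)_\infty^{k-1}}{k!} \prod_{i=1}^{k-1} \frac{t^{\binom{L_i-L_k}{2}}}{(t;t)_{L_i-L_{i+1}}}$ is independent of $w_i$ and may be absorbed into $C$. For the denominator factor $\prod_i w_i (-w_i^{-1};t)_\infty$, the hypothesis $w_i \notin \D$ gives $|w_i| \geq 1$, hence $|1/w_i| \leq 1$; the factor $(-w_i^{-1};t)_\infty = \prod_{\ell \geq 0}(1+t^\ell w_i^{-1})$ is bounded above by $\prod_{\ell \geq 0}(1+t^\ell) < \infty$ and bounded below by a positive constant because its only possible zeros are at $w_i = -t^\ell$, which for $\ell \geq 1$ lie in $\D$, and the $\ell=0$ zero at $w_i = -1$ is excluded by the neighborhood $U$. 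Finally, the $q$-Whittaker polynomial $P_{(L_1-L_k,\ldots,L_{k-1}-L_k,j)}(w_1^{-1},\ldots,w_k^{-1};t,0)$ is a polynomial in $w_i^{-1}$ of $L_i$- and $j$-bounded degree, and since $|w_i^{-1}| \leq 1$ the full $j$-sum is bounded by a constant depending only on the $L_i$ and $t$.

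The core estimate is on $\prod_{1 \leq i \neq j \leq k}(w_i/w_j;t)_\infty$. Set $n_i := -\floor{\log_t |w_i|} \geq 0$ and, by symmetry of the target bound, assume without loss of generality $n_1 \geq \cdots \geq n_k$. Pairs $(i,j)$ with $i \geq j$ give $|w_i/w_j| \leq 1$ and hence an $O(1)$ contribution from the convergent product $\prod_{\ell \geq 0}(1-t^\ell w_i/w_j)$. For $i < j$, splitting $(w_i/w_j;t)_\infty$ at index $\ell = n_i - n_j$, the terms with $\ell < n_i-n_j$ satisfy $|1 - t^\ell w_i/w_j| \leq 1 + t^{\ell - (n_i-n_j)}$ and together yield magnitude $O(t^{-\binom{n_i-n_j+1}{2}})$, while the terms with $\ell \geq n_i-n_j$ contribute only an absolute constant. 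Aggregating,
\begin{equation*}
\prod_{1 \leq i \neq j \leq k}|(w_i/w_j;t)_\infty| \leq C' \exp\left(\frac{\log t^{-1}}{2}\sum_{i<j}(n_i-n_j)^2 + O\left(\sum_i n_i\right)\right).
\end{equation*}
The identity $\sum_{i<j}(n_i-n_j)^2 = (k-1)\sum_i n_i^2 - 2\sum_{i<j} n_i n_j$ combined with $n_i \geq 0$ gives $\sum_{i<j}(n_i-n_j)^2 \leq (k-1)\sum_i n_i^2$, producing exactly the quadratic exponent $\frac{k-1}{2}(\log t^{-1})\sum_i n_i^2$ claimed. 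The residual linear-in-$n_i$ correction is absorbed into the $c_2\floor{\log_t |w_i|}$ term by taking $c_2$ sufficiently large.

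The main obstacle is purely bookkeeping: tracking the various $O(1)$ multiplicative constants arising from the transitional terms at $\ell \approx n_i - n_j$, from replacing $\log_{t^{-1}}|w_i/w_j|$ by the nearest integer $n_i - n_j$, and from the finite-but-unbounded set of pair orderings to which symmetry must be applied. Equally, one must verify that the lower bound on $|(-w_i^{-1};t)_\infty|$ remains uniform across all of $\C \setminus (\D \cup U)$, including as $|w_i| \to \infty$ (where the product approaches $\prod_{\ell \geq 0}(1+0) = 1$, so this is automatic). Excluding the neighborhood $U$ of $-1$ is the essential mechanism preventing the $\ell = 0$ factor $1 + w_i^{-1}$ from approaching zero and destroying uniformity.
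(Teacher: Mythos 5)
Your reduction—bounding the prefactor, the $q$-Whittaker sum, and the denominator by constants, and isolating the Pochhammer product as the only growing factor—is natural, and your estimate $|(w_i/w_j;t)_\infty| = O(t^{-\binom{n_i - n_j + 1}{2}})$ for $i<j$ together with $\sum_{i<j}(n_i-n_j)^2 \leq (k-1)\sum_i n_i^2$ are both correct. The gap is in the final sentence, which has the sign backwards. Writing $n_i := -\floor{\log_t|w_i|}\geq 0$ as you do, the target bound reads
\begin{equation*}
C\prod_{i=1}^k e^{\frac{k-1}{2}(\log t^{-1})n_i^2 \,-\, c_2 n_i},
\end{equation*}
and since the lemma requires $c_2 > 0$, the linear exponent $-c_2 n_i$ is \emph{nonpositive}. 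But expanding $\binom{n_i - n_j + 1}{2} = \tfrac12\bigl((n_i-n_j)^2 + (n_i-n_j)\bigr)$ and summing over $i<j$ under your ordering $n_1\geq\cdots\geq n_k$, your residual is $+\tfrac{\log t^{-1}}{2}\sum_{i<j}(n_i - n_j) \geq 0$, a \emph{nonnegative} linear term. Increasing $c_2$ only shrinks the target bound, so it cannot absorb a nonnegative residual; as written your argument yields at best $C'\prod_i e^{\frac{k-1}{2}(\log t^{-1})n_i^2 + c_3 n_i}$ with $c_3 \geq 0$, which is strictly weaker than the statement.

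This is not a detail that more careful bookkeeping repairs within your strategy. The factor $\prod_i|w_i|^{-1}$, which you discard by bounding $|w_i^{-1}|\leq 1$, carries a genuine decay $\lesssim t^{\sum_i n_i}$, i.e.\ a $-\log(t^{-1})\sum_i n_i$ contribution to the exponent. Keeping it saves the cases $k\leq 2$, but for $k\geq 3$ the test configuration $n_1 = N$, $n_2 = \cdots = n_k = 0$ still leaves a net linear coefficient $\tfrac{k-3}{2}\log t^{-1} \geq 0$, so the factor-by-factor upper bounds alone do not produce the claimed strictly negative linear exponent. Establishing the lemma requires exploiting cancellation among the theta factors, the denominator $(-w_i^{-1};t)_\infty$, and the $q$-Whittaker sum—visible already at $k=1$, where the $q$-binomial identity collapses $\tf(w_1)$ to $1/(w_1+1)$—rather than bounding each in absolute value independently.
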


\begin{lemma}[{\cite[Lemma 4.8]{van2023local}}]\label{thm:qpoc_lower_bound_quoted}
For all $n \in \Z_{\geq 1} \cup \{\infty\}$, $\delta > 0$, and $z \in \C$ such that $|\Re(z) - (-t^{-i})| > \delta t^{-i}$, we have the bound
\begin{equation} \label{eq:qpoc_lower_bound_quoted}
|(z;t)_n| \geq \delta (t^{1/2};t)_\infty^2.
\end{equation}
\end{lemma}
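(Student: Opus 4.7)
The plan is to reduce the bound to a product of real-valued factors, isolate the one factor that can plausibly be small, control it directly from the hypothesis, and bound the remaining factors by a universal constant of the form $(t^{1/2};t)_\infty^2$.

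First I would note that since $t \in (0,1)$ is real, $|1 - zt^i|^2 = (1 - t^i\Re z)^2 + (t^i\Im z)^2 \geq (1 - t^i\Re z)^2$, so $|(z;t)_n| \geq |(\Re z;t)_n|$ and it suffices to prove the bound for $z = u \in \R$. If $u \leq 0$, each factor $1 - ut^i \geq 1$, and the bound is immediate. For $u > 0$, I would select the unique integer $i^*$ with $u \in [t^{-i^*+1/2}, t^{-i^*-1/2})$, so that $t^{-i^*}$ is the closest point of the grid $\{t^{-j}\}_{j \in \Z}$ to $u$; this is the only index at which $|1 - ut^i|$ can be small. If $i^* \notin \{0,\ldots,n-1\}$ (so $u$ is either below $t^{1/2}$ or above $t^{-(n-1)-1/2}$), every factor in the product is already bounded away from $0$ by an explicit power of $t$, and the bound follows by a simpler version of the same argument.

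When $i^* \in \{0,\ldots,n-1\}$, the hypothesis (read as giving $|u - t^{-i^*}| > \delta t^{-i^*}$, which is the condition needed to separate $u$ from the nearest zero of $(z;t)_n$) yields $|1 - ut^{i^*}| = t^{i^*}|u - t^{-i^*}| > \delta$. For the remaining factors I would use the placement of $u$ directly: for $i = i^* + j$ with $j \geq 1$, $ut^i \leq t^{j-1/2}$, so $|1 - ut^i| \geq 1 - t^{j-1/2}$; for $i = i^* - k$ with $k \geq 1$, $ut^i \geq t^{-k+1/2} \geq 1$, so $|1 - ut^i| = ut^i - 1 \geq t^{-k+1/2} - 1 \geq 1 - t^{k-1/2}$. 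Multiplying these bounds (and extending freely to all $j,k \geq 1$, which only weakens the estimate when $n$ is finite since the dropped factors are all $\geq 1$) gives
\[
\prod_{i \neq i^*, \, 0 \leq i \leq n-1} |1 - ut^i| \;\geq\; \prod_{j \geq 1}(1 - t^{j-1/2}) \cdot \prod_{k \geq 1}(1 - t^{k-1/2}) \;=\; (t^{1/2};t)_\infty^2.
\]
Combining with $|1 - ut^{i^*}| > \delta$ yields $|(u;t)_n| \geq \delta (t^{1/2};t)_\infty^2$.

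The only real obstacle is bookkeeping: ensuring that for finite $n$ one does not accidentally drop factors below $1$ when passing to the infinite product, and handling the edge cases where $i^*$ sits outside $\{0,\ldots,n-1\}$. A secondary delicate point is interpreting the statement's hypothesis, since $(z;t)_n$ vanishes at $z = t^{-i}$ (rather than at $z = -t^{-i}$), so the separation condition must reference the correct grid; the argument above adapts to either convention with a sign flip.
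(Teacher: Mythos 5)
The present paper does not actually prove this statement: it is imported verbatim (as the attribution marks) from Lemma 4.8 of \cite{van2023local}, and no argument is supplied here. So there is no ``paper's own proof'' against which to compare; I can only assess your argument on its own terms. On those terms it is correct and is, moreover, the natural argument. The reduction $|(z;t)_n| \geq |(\Re z;t)_n|$ via $|1-zt^i|^2 = (1-t^i\Re z)^2 + (t^i\Im z)^2$ is right, the choice of $i^*$ as the geometric midpoint index is the right way to isolate the single factor the hypothesis controls, and the identity $\prod_{j\ge 1}(1-t^{j-1/2}) = (t^{1/2};t)_\infty$ closes the computation cleanly. You are also right to flag the sign in the quoted hypothesis: $(z;t)_n = \prod_{i=0}^{n-1}(1-zt^i)$ vanishes at $z=t^{-i}$, not at $-t^{-i}$, and the application in the paper (to $z=-tw_i$ with $\Re(w_i)$ negative, hence $\Re(z)$ positive and sitting midway between consecutive points $t^{-i}$ of the grid) confirms that the intended separation condition is $|\Re(z)-t^{-i}|>\delta t^{-i}$; as literally printed, the hypothesis would be vacuously satisfied by $\Re(z)>0$, $\delta<1$, and the conclusion would be false at $z=t^{-i}$. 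Two minor remarks: in the edge cases $i^*\notin\{0,\ldots,n-1\}$ and $u\le 0$ your argument quietly uses $\delta\le 1$ (more precisely $\delta\le (t^{1/2};t)_\infty^{-2}$); this is harmless since the paper invokes the lemma only with $0<\delta<1/2$, but worth stating. And the sentence about ``dropped factors $\geq 1$'' is backwards as phrased --- the point is that enlarging the index set in $\prod_{m\geq 1}(1-t^{m-1/2})$ multiplies in additional factors lying in $(0,1)$, which can only decrease the product, so the truncated product that actually occurs is bounded \emph{below} by the infinite one. The inequality you write is correct; only the justification sentence has the direction muddled.
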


\begin{lemma}[{\cite[Lemma 4.7]{van2023local}}]\label{thm:qp_bound_tG_quoted}
There exists a constant such that for any $n \in \Z_{\geq 1} \cup \{\infty\}$ and $w \in \tG$,
\begin{equation}
\abs*{\frac{1}{(-tw;t)_n}} \leq C.
\end{equation}
\end{lemma}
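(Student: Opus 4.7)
The statement is equivalent to the uniform lower bound
\begin{equation*}
\prod_{j=0}^{n-1}|1+t^{j+1}w| \geq 1/C
\end{equation*}
over all $n \in \{1,2,\ldots\}\cup\{\infty\}$ and $w \in \tG$. The plan is to split $\tG$ into its three smooth pieces and estimate the product on each. The right semicircular arc is immediate: there $|w|=1$, so the reverse triangle inequality gives $|1+t^{j+1}w| \geq 1-t^{j+1}$, and hence $\prod_{j=0}^{n-1}|1+t^{j+1}w| \geq \prod_{j=0}^{\infty}(1-t^{j+1}) = (t;t)_\infty > 0$.

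The real content is the two horizontal rays $w = x\pm\bi$, $x\leq 0$ (by complex conjugation symmetry of both $\tG$ and $(-tw;t)_n$ it suffices to consider $w = x+\bi$). Here the starting identity is
\begin{equation*}
|1+t^{j+1}w|^2 = (1+t^{j+1}x)^2 + t^{2(j+1)},
\end{equation*}
which yields the two complementary lower bounds $|1+t^{j+1}w| \geq t^{j+1}$ and $|1+t^{j+1}w| \geq |1-t^{j+1}|x||$. I would set $J := \lfloor \log_{1/t}|x|\rfloor$, so that $t^{-J} \leq |x| < t^{-J-1}$, and partition the indices $j$ into three zones according to the magnitude of $t^{j+1}|x|$. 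In zone (i), $j < J-O(1)$, the quantity $t^{j+1}|x|$ exceeds $2$ and the second bound gives $|1+t^{j+1}w| \geq t^{j+1}|x|-1 \geq t^{j+1}|x|/2$; in zone (ii), a finite block of $O(1/\log t^{-1})$ transition indices around $j = J$ where $t^{j+1}|x|$ sits within a factor $2$ of $1$, one falls back on the trivial $|1+t^{j+1}w| \geq t^{j+1}$; in zone (iii), $j > J+O(1)$, one has $t^{j+1}|x| \leq t^{j-J}$ and hence $|1+t^{j+1}w| \geq 1-t^{j-J}$. Multiplying the zone-wise estimates and using $|x| \geq t^{-J}$ produces
\begin{equation*}
\prod_{j=0}^{\infty}|1+t^{j+1}w| \gtrsim t^{-J(J+1)/2}|x|^{J}\cdot t^{O(J)}\cdot(t;t)_\infty \gtrsim t^{-J^2/2+O(J)}(t;t)_\infty,
\end{equation*}
which not only is bounded below uniformly but in fact grows rapidly with $|x|$.

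For the finite-$n$ case, note that truncating the product to $j \leq n-1$ only drops factors of size $\leq 1$ from zone (iii) (which cannot decrease the partial product below its infinite-$n$ value) and may drop large factors from zones (i) and (ii) when $n < J$; in the latter case the partial product consists only of zone-(i) factors and the same computation shows $\prod_{j=0}^{n-1}|1+t^{j+1}w| \gtrsim 2^{-n}t^{-n(n-1)/2}$, again uniformly bounded below. The main obstacle is the bookkeeping in zone (ii) when $t$ is close to $1$ so that several transition indices must use the weak bound $t^{j+1}$; these contribute a finite, $t$-dependent constant that can be absorbed into $C$. A cleaner alternative I would try if the zone analysis becomes unwieldy is to invoke the Jacobi triple product identity to factor $(-tw;t)_\infty(-1/w;t)_\infty(t;t)_\infty = \sum_{n\in\Z}t^{n(n+1)/2}w^n$, obtain a uniform upper bound on $|(-1/w;t)_\infty|$ for $|w|\geq 1$, and then bound the theta function from below on the horizontal rays (where $w$ is bounded away from its zeros at $-t^\Z$), with the finite-$n$ case reduced to the infinite one via $(-tw;t)_n = (-tw;t)_\infty/(-t^{n+1}w;t)_\infty$ handled analogously.
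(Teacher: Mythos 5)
The paper itself does not prove this lemma: it is quoted from \cite[Lemma 4.7]{van2023local}, so there is no in-paper argument to compare against. Judged on its own terms, your proof strategy is sound and essentially complete. Reducing to the lower bound $\prod_{j=0}^{n-1}|1+t^{j+1}w| \geq 1/C$, dispatching the semicircle by $|1+t^{j+1}w| \geq 1-t^{j+1}$, and splitting the horizontal rays into three zones by the size of $t^{j+1}|x|$ is the natural route, and the bookkeeping produces a lower bound of order $t^{-J^2/2 + O(J)}$ with $J \approx \log_{1/t}|x|$, which indeed diverges rather than merely staying bounded; the finitely many zone-(ii) indices only cost a $t$-dependent constant. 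Your observation that on the arc the finite-$n$ case is automatic because $(t;t)_n \geq (t;t)_\infty$ is also correct.

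One step in the finite-$n$ reduction is not quite right as stated and deserves a fix. You assert that "truncating the product to $j \leq n-1$ only drops factors of size $\leq 1$ from zone (iii)." On the rays $w = x \pm \bi$ one has $|1+t^{j+1}w|^2 = (1-t^{j+1}|x|)^2 + t^{2(j+1)}$, and the imaginary contribution $t^{2(j+1)}$ pushes this \emph{above} $1$ precisely when $t^{j+1}|x| < 1 - \sqrt{1-t^{2(j+1)}}$, which can certainly happen in the tail of zone (iii) and always happens when $|x|$ is small. So dropping zone-(iii) factors \emph{can} shrink the product. The repair is cheap: in zone (iii) one has $|w| \leq \sqrt{2}\,|x| < \sqrt{2}\,t^{-J-1}$, hence $|1+t^{j+1}w| \leq 1 + \sqrt{2}\,t^{j-J}$, and the product of dropped factors over $j \geq n$ (with $n$ past the zone-(ii) window, so $j-J$ bounded below by a positive constant) is at most $\prod_{m \geq 1}(1+\sqrt{2}\,t^m) < \infty$. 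Dividing the infinite-$n$ lower bound by this convergent constant gives the finite-$n$ bound. With that patch, and absorbing the various $t$-dependent constants into $C$ as you indicate, the argument is complete. The Jacobi triple product alternative you mention is a reasonable fallback, but the zone argument you carried out is already adequate and likely simpler to make fully rigorous.
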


We now show that the limit \eqref{eq:split_integral1} is $0$. For $w_i \in \Gamma_1(n_\ell)$ we have $|t^{\eta_k(n_\ell)+1}w_i| \leq t^{\eta_k(n_\ell)-\xi(n_\ell)}$ (for $n$ large enough so $|-t^{- \xi(n_\ell)}\pm i| \leq t^{-\xi(n_\ell) - 1}$). Hence $(-t^{\eta_k(n_\ell)+1}w_i;t)_{\infty} = 1 + O(t^{\eta_k(n_\ell) - \xi(n_\ell)})$ and so
\begin{equation}
\label{eq:qp_inf_fin_bound}
\frac{1}{(-tw_i;t)_{\eta_k(n_\ell)}} = \frac{1}{(-tw_i;t)_\infty}(1+O(t^{\eta_k(n_\ell) - \xi(n_\ell)})).
\end{equation}

We have by writing $(1+x)^{n_\ell} = e^{n_\ell \log(1+x)}$ and Taylor expanding the logarithm that 
\begin{align}\label{eq:gamma1_prod_bound}
\begin{split}
\left(1+\frac{t^{\zeta+L_k-\eps(n_\ell)}\sum_{i=1}^k w_i}{n_\ell}\right)^{n_\ell}  &= e^{t^{\zeta+L_k-\eps(n_\ell)}\sum_{i=1}^k w_i + O(\sum_{i=1}^k w_i^2/n_\ell)} \\ 
&= e^{t^{\zeta+L_k-\eps(n_\ell)}\sum_{i=1}^k w_i(1+O(\eps(n_\ell)))}(1+O(\sum_{i=1}^k w_i^2/n_\ell))) \\ 
&= e^{t^{\zeta+L_k-\eps(n_\ell)}\sum_{i=1}^k w_i}(1+O(\eps(n_\ell)\sum_{i=1}^k w_i)+O(\sum_{i=1}^k w_i^2/n_\ell)).
\end{split}
\end{align}
By the definition of $\Gamma_1(n)$, for any $w_i \in \Gamma_1(n)$ we have
\begin{equation}
|w_i| \leq \text{const} \cdot t^{-\xi(n_\ell)},
\end{equation}
so the error terms in \eqref{eq:gamma1_prod_bound} are $O(t^{-\xi(n_\ell)}\eps(n_\ell))$ and $O(t^{-2\xi(n_\ell)}/n_\ell)$ respectively. Since $t^{\eta_k(n)} = O(1/n)$, the error term in \eqref{eq:qp_inf_fin_bound} is dominated by the latter term. Hence 
\begin{equation}\label{eq:error_convert}
\frac{\left(1+\frac{t^{\zeta+L_k-\eps(n_\ell)}\sum_{i=1}^k w_i}{n_\ell}\right)^{n_\ell} }{\prod_{i=1}^k(-tw_i;t)_{\eta_k(n_\ell)}}= \frac{e^{t^{\zeta+L_k}\sum_{i=1}^k w_i}}{\prod_{i=1}^k (-tw_i;t)_\infty}(1+O(t^{-\xi(n_\ell)}\eps(n_\ell))+O(t^{-2\xi(n_\ell)}/n_\ell))
\end{equation}
for all $w_i \in \Gamma_1(n_\ell)$. By using \Cref{thm:tf_bound_quoted} to bound $\tf$ and using \eqref{eq:error_convert} (together with \Cref{thm:qp_bound_tG_quoted}, which applies since $\Gamma_1(n_\ell) \subset \tG$, to bound the denominator $(-t w_i;t)_\infty$ by a constant) to bound the term inside parentheses, we have that the integrand in \eqref{eq:split_integral1} is 
\begin{equation}\label{eq:int_bound_gamma1}
O\left(e^{t^{\zeta+L_k}\sum_{i=1}^k \Re(w_i) + \text{const} \cdot (\log |w_i|)^2 }(O(t^{-\xi(n_\ell)}\eps(n_\ell))+O(t^{-2\xi(n_\ell)}/n_\ell))\right).
\end{equation}
Furthermore, the $w_i$-dependent exponential in \eqref{eq:int_bound_gamma1} is bounded above by an $\ell$-independent constant uniformly over all $(w_1,\ldots,w_k) \in \tG^k \supset \Gamma_1(n_\ell)^k$ (because the $\Re(w_i)$ cause it to shrink quickly), so we may absorb it into the $O$ constant in \eqref{eq:int_bound_gamma1}. Because each contour $\Gamma_1(n_\ell)$ has length $O(t^{-\xi(n_\ell)})$ and there are $k$ contours, multiplying the volume by the bound on the integrand yields the bound $O(t^{-(k+1)\xi(n_\ell)}\eps(n_\ell))+O(t^{-(k+2)\xi(n_\ell)}/n_\ell)$ for the integral \eqref{eq:split_integral1}. Because $\xi(n_\ell) \ll \log n_\ell$ and $\xi(n_\ell) \ll -\log \eps(n_\ell)$ by definition, this bound is $o(1)$, so we have shown the vanishing of the limit \eqref{eq:split_integral1}.

We now turn to the second line, \eqref{eq:split_integral2}. Because we have chosen $r$ so that $t^r < 1/k$, and $\Re(w_i) > -t^{-\eta_k(n_\ell) + r} $ everywhere on $\Gamma(n_\ell)$, we have
\begin{equation}\label{eq:sum_w_lower_bound}
\Re\left(\frac{t^{\zeta+L_k-\eps(n_\ell)}(w_1+\cdots+w_k)}{n_\ell}\right) > -1
\end{equation} 
for all $w_1,\ldots,w_k \in \Gamma(n_\ell)$; it is perhaps easiest to see this from the fact that the above expression is just $\Re(z_1+\cdots+z_k) > -kt^r > -1$ when expressed in terms of the $z_i$. Because $|x+iy| \leq |x| + |y|$, we have
\begin{align}\label{eq:abs_bound_line2}
\begin{split}
\abs*{1+\frac{t^{\zeta+L_k-\eps(n_\ell)}\sum_{i=1}^k w_i}{n_\ell}} &= \abs*{1+\Re\left(\frac{t^{\zeta+L_k-\eps(n_\ell)}\sum_{i=1}^k w_i}{n_\ell}\right)+\Im\left(\frac{t^{\zeta+L_k-\eps(n_\ell)}\sum_{i=1}^k w_i}{n_\ell}\right)\bi} \\ 
&\leq \abs*{1+\Re\left(\frac{t^{\zeta+L_k-\eps(n_\ell)}\sum_{i=1}^k w_i}{n_\ell}\right)} + \abs*{\Im\left(\frac{t^{\zeta+L_k-\eps(n_\ell)}\sum_{i=1}^k w_i}{n_\ell}\right)} \\
&\leq 1+\frac{t^{\zeta+L_k-\eps(n_\ell)}}{n_\ell}\sum_{i=1}^k (\Re(w_i) + 1),
\end{split}
\end{align}
where in the last inequality we have used \eqref{eq:sum_w_lower_bound} to remove the absolute value around the real part, and the fact that $|\Im(w_i)| \leq 1$ on our contours. By the elementary inequality
\begin{equation}
\left(1+\frac{x}{n}\right)^n \leq e^x \quad \quad \quad \quad \text{ for $x \geq -n$},
\end{equation}
it follows that 
\begin{equation}\label{eq:it's_like_exp_bound}
\abs*{1+\frac{t^{\zeta+L_k-\eps(n_\ell)}\sum_{i=1}^k w_i}{n_\ell}}^{n_\ell} \leq e^{t^{\zeta+L_k-\eps(n_\ell)}\sum_{i=1}^k w_i}.
\end{equation}
Finally, the $|(-tw_i;t)_{\eta_k(n_\ell)}|^{-1}$ terms in \eqref{eq:split_integral2} are bounded above by a constant, by applying \Cref{thm:qp_bound_tG_quoted} to the part of the contour contained in $\tG$, and applying \Cref{thm:qpoc_lower_bound_quoted} (one may take any $0 < \delta < 1/2$) to the vertical parts. Combining \eqref{eq:it's_like_exp_bound} with \Cref{thm:tf_bound_quoted} and this constant bound on $|(-tw_i;t)_{\eta_k(n_\ell)}|^{-1}$ yields the bound
\begin{multline}\label{eq:split2_integrand_bound}
\abs*{\frac{\tf(w_1,\ldots,w_k)\left(1+\frac{t^{\zeta+L_k-\eps(n_\ell)}(w_1+\cdots+w_k)}{n_\ell}\right)^{n_\ell} }{\prod_{i=1}^k(-tw_i;t)_{\eta_k(n_\ell)}}} \\ 
\leq \text{const} \cdot e^{t^{\zeta+L_k-\eps(n_\ell)}\sum_{i=1}^k \left((\Re(w_i)+1) +\frac{k-1}{2}(\log t^{-1}) \floor{\log_t |w_i|}^2 + c_2 \floor{\log_t |w_i|}\right)}
\end{multline}
for the integrand in \eqref{eq:split_integral2}. The right hand side factorizes, i.e.
\begin{align}\label{eq:bound_factorize}
\text{RHS\eqref{eq:split2_integrand_bound}} &= \prod_{i=1}^k b_\ell(w_i)  \\ 
b_\ell(w) &:= \text{const}^{1/k} e^{t^{\zeta+L_k-\eps(n_\ell)} \left((\Re(w)+1) +\frac{k-1}{2}(\log t^{-1}) \floor{\log_t |w|}^2 + c_2 \floor{\log_t |w|}\right)}.
\end{align}
Now note that
\begin{equation}\label{eq:gamma_union_split}
\Gamma(r,n_\ell)^k \setminus \Gamma_1(n_\ell)^k = \bigcup_{i=1}^k \Gamma(r,n_\ell)^{i-1} \times \Gamma_2(r,n_\ell) \times \Gamma(r,n_\ell)^{k-i}
\end{equation}
(not a disjoint union). Hence by symmetry of the integrand, the bound \eqref{eq:split2_integrand_bound}, and the factorization \eqref{eq:bound_factorize}, it suffices to show
\begin{equation}
\label{eq:sym_split_2}
\lim_{\ell \to \infty} \int_{\Gamma_2(r,n_\ell)} b_\ell(w) dw \cdot \left(\int_{\Gamma(r,n_\ell)} b_\ell(w) dw\right)^{k-1} = 0.
\end{equation}
This is an easy exercise: the integral over $\Gamma(r,n_\ell)$ is bounded above by a constant because the $e^{t^{\zeta+L_k-\eps(n_\ell)}w}$ term in $b_\ell$ decays very rapidly as $\Re(w) \to -\infty$, while the integral over $\Gamma_2(r,n_\ell)$ goes to $0$ for the same reason since $\sup_{w \in \Gamma_2(r,n_\ell)} \Re(w) \to -\infty$ as $\ell \to \infty$. This shows \eqref{eq:sym_split_2}, which shows that the second line \eqref{eq:split_integral2} is $0$ and hence completes the proof.
\end{proof}


\begin{proof}[Proof of {\Cref{thm:jordan_limit_intro}}]
We follow the proof of \cite[Theorem 1.2]{van2023local} with appropriate substitutions. Suppose for the sake of contradiction that \eqref{eq:haar_metric_cvg} does not hold. Then there exists some $\eps > 0$, $k \in \Z_{\geq 1}$ and some subsequence $(n_j)_{j \geq 1}$ of $\Z_{\geq 1}$ such that 
\begin{equation}\label{eq:distances_too_large}
D_\infty\left((\rank(A_n^{i-1}) - \rank(A_n^i) - \floor{\log_q n})_{1 \leq i \leq k}, (\cL^{(i)}_{q^{-1},q^{\{\log_q N_j\}}})_{1 \leq i \leq k}\right) > \eps
\end{equation}
for all $j$. Since the fractional parts $\{\log_q n_j\}$ always lie in the compact set $[0,1]$, there is some $\zeta\in [-1,0]$ and further subsequence $(\tN_j)_{j \geq 1}$ of $(n_j)_{j \geq 1}$ such that 
\begin{equation}\label{eq:frac_zeta_cvg}
\lim_{j \to \infty} \{\log_q \tN_j\} = -\zeta,
\end{equation}
and in particular $-\log_q \tN_j$ converges to $\zeta$ in $\R/\Z$. Hence by \Cref{thm:jordan_limit_nometric}, for all $k \geq 1$ we have
\begin{equation}\label{eq:apply_matrix_bulk}
(\rank(A_{\tN_j}^{i-1}) - \rank(A_{\tN_j}^i) - [\log_q \tN_j+\zeta])_{1 \leq i \leq k}  \to (\cL^{(i)}_{q^{-1},q^{-\zeta}})_{1 \leq i \leq k}
\end{equation}
in distribution as $j \to \infty$. By \eqref{eq:frac_zeta_cvg}, $[\log _{q}\tN_j+\zeta] = \floor{\log_q \tN_j}$ for all $j$ sufficiently large, hence \eqref{eq:apply_matrix_bulk} implies that for all $k \geq 1$,
\begin{equation}\label{eq:apply_matrix_bulk2}
(\rank(A_{\tN_j}^{i-1}) - \rank(A_{\tN_j}^i) - \floor{\log_q \tN_j})_{1 \leq i \leq k} \to (\cL^{(i)}_{q^{-1},q^{-\zeta}})_{1 \leq i \leq k}
\end{equation}
in distribution as $j \to \infty$. Equivalently,
\begin{equation}\label{eq:D_to_zero_zeta}
\lim_{j \to \infty} D_\infty\left((\rank(A_{\tN_j}^{i-1}) - \rank(A_{\tN_j}^i) - \floor{\log_q \tN_j})_{1 \leq i \leq k},(\cL^{(i)}_{q^{-1},q^{-\zeta}})_{1 \leq i \leq k}\right) = 0.
\end{equation}
The integral representation in \Cref{thm:series_to_contour_cL} and the integrand bound \cite[Lemma 4.4]{van2023local} together imply that for each $\vec{L} \in \Sig_k$, the probability
\begin{equation}
\Pr((\cL^{(i)}_{q^{-1},q^{-\zeta}})_{1 \leq i \leq k} = \vec{L})
\end{equation}
depends continuously on $\zeta$. Hence by \eqref{eq:frac_zeta_cvg}, 
\begin{equation}\label{eq:probs_unif_cont}
\lim_{j \to \infty} D_\infty\left((\cL^{(i)}_{q^{-1},q^{\{\log_q \tN_j\}}})_{1 \leq i \leq k},(\cL^{(i)}_{q^{-1},q^{-\zeta}})_{1 \leq i \leq k}\right) = 0
\end{equation}
(this requires uniform continuity of the probabilities over all $\vec{L}$, but this follows from the stated continuity of each individual probability since the sum of probabilities is $1$). The triangle inequality for $D_\infty$ and the equations \eqref{eq:D_to_zero_zeta}, \eqref{eq:probs_unif_cont} thus imply
\begin{align}
\begin{split}
\lim_{j \to \infty} D_\infty\left((\rank(A_{\tN_j}^{i-1}) - \rank(A_{\tN_j}^i) - \floor{\log_q \tN_j})_{1 \leq i \leq k}, (\cL^{(i)}_{q^{-1},q^{\{\log_q \tN_j\}}})_{1 \leq i \leq k}\right) = 0,
\end{split}
\end{align}
but this contradicts our assumption \eqref{eq:distances_too_large}. Hence this assumption is false, i.e. the conclusion \eqref{eq:haar_metric_cvg} of \Cref{thm:jordan_limit_intro} holds, and this completes the proof.
\end{proof}


\begin{thebibliography}{10}

\bibitem{arias2004super}
Ery Arias-Castro, Persi Diaconis, and Richard Stanley.
\newblock A super-class walk on upper-triangular matrices.
\newblock {\em Journal of algebra}, 278(2):739--765, 2004.

\bibitem{baik1999distribution}
Jinho Baik, Percy Deift, and Kurt Johansson.
\newblock On the distribution of the length of the longest increasing
  subsequence of random permutations.
\newblock {\em Journal of the American Mathematical Society}, 12(4):1119--1178,
  1999.

\bibitem{baik2000distribution}
Jinho Baik, Percy Deift, and Kurt Johansson.
\newblock {On the distribution of the length of the second row of a Young
  diagram under Plancherel measure}.
\newblock {\em Geometric \& Functional Analysis GAFA}, 10(4):702--731, 2000.

\bibitem{borodin1995limit}
Alexei Borodin.
\newblock Limit {J}ordan normal form of large triangular matrices over a finite
  field.
\newblock {\em Funktsional. Anal. i Prilozhen.}, 29(4):72--75, 1995.

\bibitem{borodin1999lln}
Alexei Borodin.
\newblock The law of large numbers and the central limit theorem for the
  {J}ordan normal form of large triangular matrices over a finite field.
\newblock {\em Journal of Mathematical Sciences}, 96(5):3455--3471, 1999.

\bibitem{borodin2014macdonald}
Alexei Borodin and Ivan Corwin.
\newblock Macdonald processes.
\newblock {\em Probability Theory and Related Fields}, 158(1-2):225--400, 2014.

\bibitem{borodin2000asymptotics}
Alexei Borodin, Andrei Okounkov, and Grigori Olshanski.
\newblock Asymptotics of {P}lancherel measures for symmetric groups.
\newblock {\em Journal of the American Mathematical Society}, 13(3):481--515,
  2000.

\bibitem{borodin2017representations}
Alexei Borodin and Grigori Olshanski.
\newblock {\em Representations of the infinite symmetric group}, volume 160.
\newblock Cambridge University Press, 2017.

\bibitem{bufetov2015lln}
Alexey Bufetov and Leonid Petrov.
\newblock Law of large numbers for infinite random matrices over a finite
  field.
\newblock {\em Selecta Mathematica}, 21(4):1271--1338, 2015.

\bibitem{coppersmith2000random}
Don Coppersmith and Igor Pak.
\newblock Random walk on upper triangular matrices mixes rapidly.
\newblock {\em Probability theory and related fields}, 117:407--417, 2000.

\bibitem{diaconis2021complexity}
Persi Diaconis and Maryanthe Malliaris.
\newblock {Complexity and randomness in the Heisenberg groups (and beyond)}.
\newblock {\em New Zealand Journal of Mathematics}, 52:403--426, 2021.

\bibitem{feray2012asymptotics}
Valentin F{\'e}ray and Pierre-Lo{\"\i}c M{\'e}liot.
\newblock {Asymptotics of $q$-Plancherel measures}.
\newblock {\em Probability Theory and Related Fields}, 152:589--624, 2012.

\bibitem{fulman_main}
Jason Fulman.
\newblock Random matrix theory over finite fields.
\newblock {\em Bulletin of the American Mathematical Society}, 39(1):51--85,
  2002.

\bibitem{ganguly2019upper}
Shirshendu Ganguly and Fabio Martinelli.
\newblock Upper triangular matrix walk: Cutoff for finitely many columns.
\newblock {\em Random Structures \& Algorithms}, 55(2):313--341, 2019.

\bibitem{kerov1992generalized}
SV~Kerov.
\newblock Generalized {H}all-{L}ittlewood symmetric functions and orthogonal
  polynomials.
\newblock {\em Representation theory and dynamical systems Advances in Soviet
  Math}, 9:67--94, 1992.

\bibitem{kirillov1995variations}
A.~A. Kirillov.
\newblock Variations on the triangular theme.
\newblock In {\em Lie groups and {L}ie algebras: {E}. {B}. {D}ynkin's
  {S}eminar}, volume 169 of {\em Amer. Math. Soc. Transl. Ser. 2}, pages
  43--73. Amer. Math. Soc., Providence, RI, 1995.

\bibitem{mac}
Ian~Grant Macdonald.
\newblock {\em Symmetric functions and Hall polynomials}.
\newblock Oxford university press, 1998.

\bibitem{matveev2019macdonald}
Konstantin Matveev.
\newblock Macdonald-positive specializations of the algebra of symmetric
  functions: Proof of the {K}erov conjecture.
\newblock {\em Annals of Mathematics}, 189(1):277--316, 2019.

\bibitem{nestoridi2019super}
Evita Nestoridi.
\newblock Super-character theory and comparison arguments for a random walk on
  the upper triangular matrices.
\newblock {\em Journal of Algebra}, 521:97--113, 2019.

\bibitem{peres2013mixing}
Yuval Peres and Allan Sly.
\newblock Mixing of the upper triangular matrix walk.
\newblock {\em Probability Theory and Related Fields}, 156(3):581--591, 2013.

\bibitem{stong1995random}
Richard Stong.
\newblock Random walks on the groups of upper triangular matrices.
\newblock {\em The Annals of Probability}, pages 1939--1949, 1995.

\bibitem{van2020limits}
Roger Van~Peski.
\newblock Limits and fluctuations of $p$-adic random matrix products.
\newblock {\em Selecta Mathematica}, 27(5):1--71, 2021.

\bibitem{vanpeski2021halllittlewood}
Roger Van~Peski.
\newblock Hall–{L}ittlewood polynomials, boundaries, and $p$-adic random
  matrices.
\newblock {\em International Mathematics Research Notices},
  2023(13):11217--11275, 2022.

\bibitem{van2022q}
Roger Van~Peski.
\newblock q-{TASEP} with position-dependent slowing.
\newblock {\em Electronic Journal of Probability}, 27:1--35, 2022.

\bibitem{van2023local}
Roger Van~Peski.
\newblock Local limits in $ p $-adic random matrix theory.
\newblock {\em arXiv preprint arXiv:2310.12275}, 2023.
\newblock To appear in \emph{Proceedings of the London Mathematical Society.}

\end{thebibliography}

\end{document}